\documentclass[12pt]{amsart}

\setlength{\textheight}{23cm}
\setlength{\textwidth}{16cm}
\setlength{\topmargin}{-0.8cm}
\setlength{\parskip}{0.3\baselineskip}
\hoffset=-1.4cm

\usepackage{amsmath,amsthm,amsfonts,amssymb,amscd}
\usepackage[all]{xy}
\usepackage{sseq}

\theoremstyle{plain}
\newtheorem{thm}{Theorem}[section]
\newtheorem{prop}[thm]{Proposition}
\newtheorem{lem}[thm]{Lemma}
\newtheorem{cor}[thm]{Corollary}

\theoremstyle{definition}

\newtheorem{remark}[equation]{Remark}

\theoremstyle{remark}

\numberwithin{equation}{section}

\newcommand{\Z}{\mathbb{Z}}

\newcommand{\Q}{\mathbb{Q}}
\newcommand{\R}{\mathbb{R}}

\newcommand{\C}{\mathbb{C}}

\newcommand{\ti}{\tilde}

\renewcommand{\phi}{\varphi}

\newcommand{\im}{\mathrm{im}}

\DeclareMathSymbol{\sdp}{\mathbin}{AMSb}{"6F}

\newcommand{\E}{\mathcal{E}}

\newcommand{\ignore}[1]{}

\newcommand{\rank}{\mathrm{rank}}
\begin{document}
\baselineskip=16pt

\begin{abstract}
Consider a Riemann surface $X$ of genus $g \geq 2$ equipped with an antiholomorphic involution $\tau$. This induces a natural involution on the moduli space $M(r,d)$ of semistable Higgs bundles of rank $r$ and degree $d$.  If $D$ is a divisor such that $\tau(D) = D$, this restricts to an involution on the moduli space $M(r,D)$ of those Higgs bundles with fixed determinant $\mathcal{O}(D)$ and trace-free Higgs field. The fixed point sets of these involutions $M(r,d)^{\tau}$ and $M(r,D)^{\tau}$ are $(A,A,B)$-branes introduced by Baraglia-Schaposnik \cite{BS}. In this paper, we derive formulas for the mod 2 Betti numbers of $M(r,d)^{\tau}$ and $M(r,D)^{\tau}$ when $r=2$ and $d$ is odd. In the course of this calculation, we also compute the mod 2 cohomology ring of $SP^m(X)^{\tau}$, the fixed point set of the involution induced by $\tau$ on symmetric products of the Riemann surface.   
\end{abstract}

\title{Symmetric products of a real curve and the moduli space of Higgs bundles}
\address{ 	Department of Mathematics and Statistics,
Memorial University of Newfoundland,
St. John's, NL,
Canada,
A1C 5S7,
tbaird@mun.ca}
\author{Thomas John Baird}
\keywords{Moduli spaces of Higgs bundles, symmetric products of a curve, spectral sequences, Betti numbers, anti-holomorphic involutions}

\maketitle

\section{Introduction}

Let $X$ denote a compact, connected Riemann surface with canonical bundle $K$.  A \emph{Higgs bundle} $(E,\Phi)$ over $X$ consists of a holomorphic vector bundle $E$ over $X$ and a section $\Phi \in H^0(X, Hom(E, E \otimes K))$ called the \emph{Higgs field}.  A Higgs field is called \emph{stable} if all proper vector subbundles $F \subset E$ such that $\Phi(F) \subseteq F \otimes K$ satisfy $deg(F)/ rank(F) \leq deg(E)/rank(E)$.  In \cite{H}, Hitchin constructed the moduli space $M(r,d)$ of semistable Higgs bundles. We will always assume that $r$ and $d$ are coprime and $X$ has genus $g \geq 2$, so $M(r,d)$ is non-singular.

Fix a divisior $D \in Div(X)$.  Define $M(r, D)$ to be the subvariety of $M(r, d)$ of Higgs bundles $(E, \Phi)$ for which $\wedge^r E \cong \mathcal{O}(D)$ and $tr(\Phi)=0$.  Both $M(r,d)$ and $M(r, D)$ admit a complete hyperk\"ahler metric: a Riemannian metric $g$ which is K\"ahler with respect to three different complex structures $I, J,K$ that satisfy the quaternionic relations. We denote by $\omega_I, \omega_J, \omega_K$ the corresponding K\"ahler forms.

Suppose that $X$ admits an anti-holomorphic involution $\tau$ and call $(X,\tau)$ a \emph{real curve}. This induces an involution on $M(r,d)$ (which we also call $\tau$) sending a pair $(E,\Phi)$ to $\tau(E,\Phi) := (\tau(E), \tau(\Phi))$, where $ \tau(E) = \tau^*\overline{E}$ is the conjugate pull-back and $\tau(\Phi)$ is the composition 
$$ \xymatrix{ \tau^*\overline{E} \ar[rr]^{(\tau^*)^{-1}} && E \ar[r]^{\Phi} & E \otimes K \ar[r]^{\tau^*}  & \tau^*(\overline{E} \otimes \overline{K}) \ar[r]^{\cong} &  \tau^*(\overline{E} )\otimes K } $$  
where we have used the natural isomorphism $K \cong \tau^* \overline{K}$ determined by the fact that $\tau$ is anti-holomorphic. If $D \in Div(X)$ is a \emph{real divisor} in the sense that $\tau(D) = D$, then $\tau$ restricts to an involution of $M(r,D)$.  

This involution was considered by Baraglia-Schaposnik \cite{BS}. It preserves the hyperk\"ahler metric, is anti-holomorphic with respect to $I,J$ and is holomorphic with respect to $K$. Consequently, the fixed point sets of the involutions $M(r,d)^\tau$ and $M(r,D)^\tau$ are real and Lagrangian with respect to $I,J$ and are complex and symplectic with respect to $K$.  Such a submanifold is called an $(A,A,B)$-brane, and plays a role in the Kapustin-Witten approach to geometric Langlands \cite{KW,GW, BN}. In this paper, we derive formulas computing the mod 2 Betti numbers of $M(r,d)^{\tau}$ and $M(r,D)^{\tau}$ in case rank $r=2$ and degree $d$ is odd.  

More generally, for any $\theta \in [0,2 \pi)$ we may consider the involution $\tau_{\theta}$ defined by $\tau_{\theta} (E ,\Phi) = (\tau(E) , e^{i\theta} \tau(\Phi))$.  This involution is anti-holomorphic with respect to $I$ and $cos(\theta/2) J - sin(\theta/2) K$ and holomorphic with respect to $sin(\theta/2)J +cos(\theta/2) K$. In particular, the $\tau_{\pi}$ fixed point set is an (A,B,A)-brane in the sense of Kapustin-Witten \cite{KW}. Since
$$\tau_{\theta}  := e^{i\theta} \tau = e^{i \theta/2} \tau e^{-i \theta/2},  $$
the diffeomorphism type of the fixed point set of $\tau_{\theta}$ is independent of $\theta$ and it suffices for our purposes to work with $\tau_0 = \tau$.

\subsection{Outline of the proof}\label{Outline of the proof}

There is natural $\C^*$-action on $M(r,D)$ by scaling the Higgs field. Hitchin observed that the restricted $U(1)$-action is Hamiltonian with respect to the symplectic structure $\omega_I$, with proper moment map $\mu: M(r,D) \rightarrow \R,$ $$\mu(E,\Phi) = \| \Phi \|_{L^2}^2.$$ Therefore, by a theorem of Frankel \cite{F}, the function $\mu$ is a perfect Morse-Bott function with respect to rational coefficients and the critical points of $\mu$ coincide with the $U(1)$-fixed points. This means we have an equality

 $$  P_t^{\Q}(M(r,D)) = \sum_{F \text{ component of } M(r,D)^{U(1)}} P_t^{\Q}( F) t^{2 d_F}  $$
where $P_t^{\mathbb{K}}(Y) := \sum_{i=0}^{\infty} \dim(H^i(Y;\mathbb{K})) t^i$ is the Poincar\'e series and $2 d_F$ is the Morse index of the path component $F$ (which is necessarily even because the negative normal bundles are symplectic). This reduces the calculation of the rational Betti numbers of $M(r,D)$ to calculating the Betti numbers of the fixed point components $F$ and their Morse indices $2 d_F$. This was carried out for rank $r=2$ by Hitchin \cite{H} and for rank $r=3$ by Gothen \cite{G}.

Similar considerations apply to compute mod 2 Betti numbers of $M(r,D)^{\tau}$. The involution is compatible with the $U(1)$-action in the sense that $ e^{i \theta} \circ \tau = \tau \circ e^{-i \theta}$ and $\mu \circ \tau = \mu$. In this circumstance, a theorem of Duistermaat \cite{D,BGH} tells us that the restriction of $\mu$ to $M(r,D)^{\tau}$ is a perfect Morse-Bott function with respect to mod 2 coefficients. The set of critical points of $\mu$ restricted to $M(r,D)^{\tau}$ coincides with $M(r,D)^{\tau} \cap M(r,D)^{U(1)}$ and the Morse indices are halved (since they compute the dimension of Lagrangian vector subbundles of symplectic vector bundles). Consequently, we obtain the formula
\begin{equation}\label{PPMform}
 P_t^{\Z_2}(M(r,D)^{\tau}) = \sum_{F \text{ component of } M(r,D)^{U(1)}} P_t^{\Z_2}( F^{\tau} ) t^{d_F}.
\end{equation}
Thus to compute the mod 2 Betti numbers of $M(r,D)^{\tau}$ it remains only to compute those of $F^{\tau}$. 

The Morse function $\mu$ is globally minimized on $M(r,D)$ exactly when the Higgs field vanishes.  Therefore the minimizing set of $\mu$ on $M(r,D)$ is identifed with the \emph{moduli space of stable vector bundles} $N(r,D)$ of rank $r$ and determinant $\mathcal{O}(D)$. The global minimizing set of $\mu$ restricted to $M(r,D)^{\tau}$ is consequently identified with $N(r,D)^{\tau}$, the \emph{moduli space of Real vector bundles} of rank $r$ and determinant $\mathcal{O}(D)$. This moduli space was introduced in \cite{BHH, S} and its mod 2 Betti numbers were computed in \cite{B, B2, LS}.

Restrict now to the case where the rank $r = 2$.  Hitchin shows that the remaing $U(1)$-fixed points are represented by pairs $(E, \Phi)$ of the form

\begin{equation*}
 \E  = L \oplus (L^* \otimes \mathcal{O}(D)),~~~~  \Phi =  \begin{bmatrix}
    0       & 0  \\
    \phi     &  0
\end{bmatrix}
\end{equation*}
where $\phi \in H^0( L^{-2} \otimes K(D) )$.  The fixed point components are identified with pullbacks of the form
\begin{equation}\label{pullbackdiagram}
 \xymatrix{  F_l \ar[r] \ar[d] & Pic_l(X) \ar[d]^{sq} \\
SP^{m}(X)  \ar[r]^{aj} & Pic_m(X)  }
\end{equation}
where $SP^m(X)$ is the $m$-fold symmetric product of $X$, $aj$ is the Abel-Jacobi map, $sq$ is the map sending $[L]$ to $[L^{-2} \otimes K(D)]$, $m = 2g-2 - 2l+d$, and $l$ ranges between $1 \leq l \leq g-1$. Here, $sq$ is a $2^{2g}$-fold covering map that can be identified with the squaring map under an appropriate translation $ Pic_l(X) \cong Pic_m(X) $ to the Jacobian $Jac(X):=Pic_0(X)$ which is isomorphic to $U(1)^{2g}$ as a Lie group. 

The diagram (\ref{pullbackdiagram}) is equivariant with respect to the induced $\tau$-actions and we identify $F_l^{\tau}$ with the pull-back of the restriction to $\tau$-fixed points

\begin{equation}\label{pullbackreal}
 \xymatrix{  F_l^\tau \ar[r] \ar[d] & Pic_l(X)^\tau \ar[d]^{sq} \\
SP^{m}(X)^\tau  \ar[r]^{aj} &Pic_m(X)^\tau  .} 
\end{equation}

Thus we are reduced to computing mod 2 Betti numbers of covering spaces of path components of $SP^{m}(X)^{\tau}$. We first compute the mod 2 cohomology ring of path components of $SP^{m}(X)^{\tau}$ in \S \ref{Symmetric products of a real curve}. We then employ an Eilenberg-Moore spectral sequence argument to compute the Betti numbers of the cover in \S \ref{Real Abel-Jacobi and covering spaces}.  In \S \ref{Higgs bundle Poincare series} we input the result into equation (\ref{PPMform}) to calculate the Betti numbers of $M(2,D)^{\tau}$, from which the Betti numbers of $M(2,d)^{\tau}$ are easily determined.

\section{Topology of real curves and Picard groups}\label{Topology of real curves}

We summarize some material from Gross-Harris \cite{GH}. Given a real curve $(X,\tau)$, the fixed point set $X^{\tau}$  is a union of circles that we call \emph{real circles}. Compact, connected real curves $(X,\tau)$ are classified by three invariants $(g,n,a)$, where $g$ is the genus, $n$ is the number of real circles, and either $a=1$ if $X \setminus X^{\tau}$ has connected, or $a=0$ if it is disconnected. The range of possible values is
 $$\{ (g,n,a)~|~1-a \leq n \leq g+1-a,\text{ and if }  a =0 \text{, then } g-n \equiv1~mod~2  \}.$$
 
A Real line bundle $(L,\tilde{\tau})$ over $(\Sigma,\tau)$ is a line bundle equipped with an isomorphism $\tilde{\tau}: L \xrightarrow{\sim} \tau^* \overline{L}$. The fixed point set $L^{\tilde{\tau}} \rightarrow \Sigma^{\tau}$ is an $\R^1$-bundle with first Stiefel-Whitney class $w_1(L^{\tau}) \in H^1(\Sigma^{\tau}; \Z_2)$. If $X^{\tau}$ is non-empty, then $L \cong \mathcal{O}(D)$ for some real divisor $D$ and we define $w_1(D) = w_1( \mathcal{O}(D)^{\tau})$.
If $C \subset X^{\tau}$ is a real circle, then the following are equivalent
\begin{enumerate}
\item[(i)]  $\mathcal{O}(D)^{\tilde{\tau}}|_C$ is non-orientable,
\item[(ii)]  $w_1(D)(C) = 1$,
\item[(iii)] $D$ has odd degree supported on $C$.
\end{enumerate}
We call $C$ odd with respect to $D$ if it satisfies these equivalent conditions. If $k$ is the number of odd circles of $D$ then
\begin{equation}\label{d=w=k}
\deg(D)  \equiv w_1(D)(\Sigma^{\tau}) \equiv k~mod~2.
\end{equation}

Assume now that $(X,\tau)$ has $n\geq 1$ real circles. Then $Pic_0(X)^{\tau} \cong (\Z/2)^{n-1}\times (S^1)^g$ as a Lie group. For general $m \in \Z$, $Pic_m(X)^{\tau}$ is a torsor for $Pic_0(X)^{\tau}$ yielding a diffeomorphism $Pic_m(X)^{\tau} \cong (\Z/2)^{n-1}\times (S^1)^g$. The path components are classified by $w_1([D]) = w_1(D)$ which must satisfy $w_1([D]) (X^{\tau}) \equiv m ~mod~2.$ The Abel-Jacobi map restricts to a map $$aj: SP^m(X)^{\tau} \rightarrow Pic_m(X)^{\tau}$$ which is injective on $\pi_0$ and surjective on $\pi_0$ if $m \geq n-1$. 

If $X^{\tau} = \emptyset$, then the behaviour of $Pic_m(X)^{\tau}$ is complicated by the existence of \emph{quaternionic line bundles} \cite{BHH}. In particular, there exist divisor classes $[D] $ that are fixed by $\tau$ but are not represented by a real divisor. This issue is unimportant for our purposes, because our condition that $\deg(D)$ is odd will force $(X,\tau)$ to have real points by (\ref{d=w=k}).

\section{ Symmetric products of a real curve}\label{Symmetric products of a real curve}

Let $(X,\tau)$ be a real curve of genus $g$. For $m\geq 0$, the symmetric product $SP^m(X) := (X\times ... \times X)/S_m$ is a $m$-dimensional complex manifold, which is identified with the set of effective divisors of degree $d$ (by convention $SP^0(X)$ is a point). The involution $\tau$ acts on $SP^m(X)$ and our goal in this section is to calculate the mod 2 cohomology ring of the path components of the fixed point set $ SP^m(X)^{\tau}$.

\subsection{Symmetric products and the case $X^{\tau} = \emptyset$}

We review the cohomology theory of symmetric products of surfaces and prove a few results for later application. See \cite{BGZ} for a nice survey of this topic. All cohomology is taken with coefficients $\Z_2$ unless otherwise indicated.

Consider the sequence of inclusions $$X = SP^1(X) \hookrightarrow SP^2(X) \hookrightarrow ...$$ defined by adding a basepoint at each step. The colimit of this sequence, denoted $SP^{\infty}(X)$, is the free abelian monoid on $X$.  The Dold-Thom Theorem yields a homotopy equivalence
$$ SP^{\infty}(X) \sim K( H^1(X;\Z), 1) \times K(H^2(X;\Z), 2) \sim Jac(X) \times \C P^{\infty} $$
where $Jac(X)$ is the Jacobian of $X$. The cohomology ring is a graded symmetric algebra
 $$ H^*(SP^{\infty}(X)) \cong \wedge(x_1,...,x_g, y_1,...,y_g) \otimes S(\eta)$$ 
where $x_1,...,x_g, y_1,...,y_g$ have degree one and $\eta$ has degree two. The following formulas hold over $\Z$, but we work always over $\Z_2$. Macdonald \cite{M} proved:

\begin{thm}\label{McDThm}
The inclusion $SP^m(X) \hookrightarrow SP^{\infty}(X)$ induces a surjection in cohomology $$ \wedge( x_1,...,x_g, y_1,...,y_g)\otimes S(\eta)  \rightarrow  H^*(SP^m(X))$$ with relations generated by 
\begin{equation}\label{McDrelations}
 x_{i_1}...x_{i_a} y_{j_1}...y_{j_b} (x_{k_1}y_{k_1} - \eta)... (x_{k_c}y_{k_c} - \eta) \eta^q =0 .
\end{equation}
for all choices $i_1,..,i_a, j_1,...,j_b, k_1,...,k_c$ of distinct integers from 1 to g inclusive such that $a +b+2c+q = m+1$.
The $\Z_2$-Poincar\'e series and Euler characteristic statisfy
$$ P_t(SP^m(X))) = \sum_{i=0}^{min(2g,m)} { 2g \choose i} (t^i+t^{i+2}+...+t^{2m-i}) = \sum_{i=0}^{min(2g,m)} { 2g \choose i}  \frac{ t^{2m+2-i}-t^i}{t^2-1} $$ 
$$ \chi( SP^m(X)) =  (-1)^m{ 2g-2 \choose m}  $$
\end{thm}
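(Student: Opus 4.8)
The plan is to route everything through the Dold--Thom description already recorded in the excerpt, together with the geometry of the Abel--Jacobi map, and to extract the numerics only at the very end. Write $i_m\colon SP^m(X)\hookrightarrow SP^\infty(X)$ for the stabilization map, so that $i_m^*$ is precisely the ring homomorphism $\wedge(x_1,\dots,x_g,y_1,\dots,y_g)\otimes S(\eta)\to H^*(SP^m(X);\Z_2)$ of the statement; here $\eta$ restricts from $H^2(\C P^\infty)$ --- concretely it is the mod $2$ reduction of $c_1$ of the line bundle $\mathcal O(\mathcal D)$ of the universal effective divisor $\mathcal D\subset X\times SP^m(X)$, pulled back along a slice $\{p_0\}\times SP^m(X)$ --- while the $x_i,y_i$ restrict from $H^1(Jac(X))$. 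The first point is that $i_m^*$ is \emph{surjective}. This I would deduce from the standard fact that, for symmetric products of a connected CW complex, the stabilization $H_*(SP^m(X);\Z_2)\to H_*(SP^\infty(X);\Z_2)$ is injective; dualizing over the field $\Z_2$ gives surjectivity of $i_m^*$ in each degree. (Alternatively, one argues by descending induction from the range $m\geq 2g-1$, where Riemann--Roch makes $aj$ a $\P^{m-g}$-bundle $\P(\pi_*\mathcal P)$ over $Pic_m(X)$, so that Leray--Hirsch yields the ring structure outright.)

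Next I would check that the elements (\ref{McDrelations}) lie in $\ker i_m^*$. The mechanism is the universal divisor: in the K\"unneth decomposition $H^*(X;\Z_2)=\langle 1\rangle\oplus\langle e_1,\dots,e_{2g}\rangle\oplus\langle\omega\rangle$ the class $\zeta=c_1(\mathcal O(\mathcal D))\in H^2(X\times SP^m(X);\Z_2)$ has the form $\zeta=1\otimes\eta+\sum_k e_k\otimes\xi_k+\omega\otimes1$ with $\{\xi_k\}=\{i_m^*x_i,i_m^*y_i\}$. Because $H^*(X;\Z_2)$ vanishes above degree $2$, every product of $\zeta$ and of the classes $e_k\otimes1$ whose $X$-component would sit in degree $\geq 3$ must vanish in $H^*(X\times SP^m(X);\Z_2)$, and pushing such identities forward to $SP^m(X)$ produces exactly the monomials of (\ref{McDrelations}), the binomial factors $(x_ky_k-\eta)$ bookkeeping a term $e_k\otimes\xi_k$ paired with its Poincar\'e dual. (At the very least this gives $\eta^{\,m+1}=0$, since $i_m^*\eta$ restricts to a hyperplane class on the fibres of $aj$, which have complex dimension $\leq m$; the remaining relations come from inserting the $e_k$.) Hence $i_m^*$ descends to a surjection $\bar i_m\colon R_m\twoheadrightarrow H^*(SP^m(X);\Z_2)$, where $R_m$ is the quotient of $\wedge(x_1,\dots,x_g,y_1,\dots,y_g)\otimes S(\eta)$ by the ideal generated by (\ref{McDrelations}).

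The heart of the argument --- and the step I expect to be the main obstacle --- is proving $\bar i_m$ injective, i.e.\ that (\ref{McDrelations}) cuts out the \emph{whole} kernel. Since $\bar i_m$ is onto, it suffices over the field $\Z_2$ to show $P_t(R_m)\leq P_t(SP^m(X);\Z_2)$ coefficientwise. For the right-hand side I would combine the universal-coefficient inequality $P_t(SP^m(X);\Z_2)\geq P_t(SP^m(X);\Q)$ with the transfer isomorphism $H^*(SP^m(X);\Q)\cong (H^*(X;\Q)^{\otimes m})^{S_m}=\Sym^m_{\mathrm{gr}}H^*(X;\Q)$, whose Poincar\'e series is a direct symmetric-function computation. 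It then remains to check that $P_t(R_m)$ equals that same symmetric-function count, which is done by exhibiting an explicit monomial $\Z_2$-basis of $R_m$ (a Gr\"obner-style reduction by (\ref{McDrelations}) leaves an explicit family of monomials $x_Sy_T\eta^q$ with a bound on the number of matched pairs $x_iy_i$) and comparing Hilbert series. The chain $P_t(R_m)\geq P_t(SP^m(X);\Z_2)\geq P_t(SP^m(X);\Q)=P_t(R_m)$ then collapses, forcing $\bar i_m$ to be an isomorphism; the combinatorial matching of the two Hilbert series is exactly the bookkeeping occupying the bulk of Macdonald's paper.

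The numerical statements now follow. From the symmetric-function side, summing over $m$ gives
$$\sum_{m\geq0}P_t(SP^m(X))\,q^m=\frac{(1+tq)^{2g}}{(1-q)(1-t^2q)}.$$
Since $\dfrac{1}{(1-q)(1-t^2q)}=\sum_{j\geq0}(1+t^2+\dots+t^{2j})\,q^j$ and $(1+tq)^{2g}=\sum_i\binom{2g}{i}t^iq^i$, the coefficient of $q^m$ is $\sum_{i=0}^{\min(2g,m)}\binom{2g}{i}(t^i+t^{i+2}+\dots+t^{2m-i})$, which is the asserted Poincar\'e series once each inner geometric series is rewritten as $\dfrac{t^{2m+2-i}-t^i}{t^2-1}$. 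Finally, setting $t=-1$ collapses the generating function to $\dfrac{(1-q)^{2g}}{(1-q)^2}=(1-q)^{2g-2}$, whence $\chi(SP^m(X))=(-1)^m\binom{2g-2}{m}$.
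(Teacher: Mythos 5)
The paper does not prove this statement at all: it is quoted verbatim from Macdonald \cite{M} (reduced mod $2$), so there is nothing internal to compare your argument against. Your reconstruction follows the standard line of Macdonald's proof and is correct in outline: surjectivity of $i_m^*$ from the stable splitting of $H_*(SP^m(X))$ in $H_*(SP^\infty(X))$ (or Leray--Hirsch over $Pic_m(X)$ for $m\geq 2g-1$), the relations from the K\"unneth expansion of the universal divisor class killed by $H^{>2}(X)=0$, and completeness of the relations by the sandwich $P_t(R_m)\geq P_t(SP^m(X);\Z_2)\geq P_t(SP^m(X);\Q)=P_t(R_m)$ --- which has the pleasant side effect of re-proving that $H^*(SP^m(X);\Z)$ is torsion-free, so the mod $2$ and rational Betti numbers agree, as the quoted Poincar\'e series requires. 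The generating-function extraction of $P_t$ and $\chi$ at the end is complete and correct. The two places where the real work is deferred rather than done are exactly the ones you flag: (a) the universal-divisor computation that actually produces the mixed relations (\ref{McDrelations}) --- one must push forward along the $X$ factor and track carefully how the binomials $x_ky_k-\eta$ emerge, not just observe that high $X$-degree terms vanish; and (b) the monomial-basis/Hilbert-series count showing $P_t(R_m)=P_t(\Sym^m_{\mathrm{gr}}H^*(X;\Q))$. Both are genuine combinatorial content (the bulk of Macdonald's paper, as you say), but the logical scaffolding around them is sound, and for the purposes of this paper the citation suffices.
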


We will replace $SP^{\infty}(X)$ with a finite dimensional approximation 
\begin{equation}\label{finiteDoldThom}
 SP^m(X) \rightarrow Pic_m(X) \times \C P^m
\end{equation}
inducing a surjection on cohomology with the same generators and relations. The first coordinate function of (\ref{finiteDoldThom}) is the Abel-Jacobi map, sending $D$ to $[D]$. For the second, define a continuous map 
\begin{equation}\label{collapsemap}
X \rightarrow  S^2 \cong \C P^1
\end{equation} 
by collapsing the complement of a disk in $X$ to a point (obviously this is not holomorphic). The second coordinate function is the induced map $SP^m(X) \rightarrow SP^m(\C P^1) \cong P( Sym^m(\C^2)) \cong \C P^m$. If $X^{\tau}$ is non-empty, then map (\ref{finiteDoldThom}) can be chosen $\tau$-equivariant with respect to the standard involution on $\C P^m$. 

If $X^{\tau}$ is empty, then every element of $SP^m(X)^{\tau}$ must be a sum of conjugate pairs of points. But conjugate pairs of points are in one-to-one correspondence with points in the quotient space $X/\tau$, which is a non-orientable surface homeomorphic to the $(g+1)$-fold connected sum $N_g := \R P^2 \# ... \# \R P^2$. Thus,

\begin{prop}\label{Nofixed}
Suppose $(X,\tau)$ is a real curve of genus g and $X^{\tau} = \emptyset$. Then 
$$ SP^m(X)^{\tau} \cong \begin{cases} \emptyset & \text{ if $m$ is odd} \\ SP^{m/2}(X/\tau) = SP^{m/2}(N_g) & \text{ if $m$ is even} .\end{cases} $$
\end{prop}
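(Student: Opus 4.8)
The plan is to understand the fixed-point set $SP^m(X)^{\tau}$ directly as a set of effective divisors. An effective divisor $\Delta$ of degree $m$ is fixed by $\tau$ if and only if $\tau$ permutes its support, preserving multiplicities; that is, if and only if $\Delta$ is a sum of $\tau$-orbits. Since $X^{\tau}=\emptyset$, every $\tau$-orbit on $X$ consists of exactly two distinct conjugate points $\{p,\bar p\}$. Hence any $\tau$-fixed $\Delta$ has the form $\Delta = \sum_i n_i(p_i+\bar p_i)$, from which it is immediate that $m = \deg(\Delta) = 2\sum_i n_i$ is even; this disposes of the odd case. For the even case $m = 2k$, I would set up the obvious map: the quotient $q\colon X \to X/\tau$ sends a conjugate pair $\{p,\bar p\}$ to a single point of $X/\tau$, and the assignment $\sum_i n_i(p_i+\bar p_i) \mapsto \sum_i n_i\, q(p_i)$ defines a map $SP^m(X)^{\tau} \to SP^{k}(X/\tau)$.

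The key step is then to verify this map is a homeomorphism. Surjectivity is clear: given an effective divisor $\sum n_j x_j$ of degree $k$ on $X/\tau$, lift each $x_j$ to its (two-point) fiber and take $\sum n_j q^{-1}(x_j)$. Injectivity follows because the fiber $q^{-1}(x)$ of a point of $X/\tau$ is determined by $x$, so the multiset of conjugate pairs is recovered from its image. Continuity in both directions I would argue using the description of symmetric products as quotients of Cartesian powers: the map is covered by the $S_k$-equivariant map $X^{2k} \supseteq \{(p_1,\bar p_1,\dots,p_k,\bar p_k)\} \to (X/\tau)^k$, which is continuous with continuous local sections away from the (harmless) diagonal strata, and both $SP^m(X)^\tau$ and $SP^k(X/\tau)$ carry the quotient topology; alternatively one can observe both spaces are compact Hausdorff and the map is a continuous bijection. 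Finally, $X/\tau$ is a closed non-orientable surface, and a standard Euler characteristic count (or the classification of surfaces together with the invariant $a=0$ forced when $X^\tau=\emptyset$) identifies it with $N_g = \#^{g+1}\mathbb{R}P^2$, since $\chi(X/\tau) = \chi(X)/2 = (2-2g)/2 = 1-g = \chi(N_g)$.

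I do not expect any serious obstacle here; the statement is essentially a bookkeeping identity for effective divisors, and the only mildly delicate point is checking that the bijection $SP^m(X)^{\tau} \cong SP^{m/2}(X/\tau)$ is a homeomorphism rather than merely a bijection — which is handled cleanly by the compact-Hausdorff remark. The one thing worth stating carefully is why there are no further $\tau$-fixed divisors: because $X^\tau=\emptyset$ there are no $\tau$-fixed \emph{points} contributing odd local terms, so the decomposition into conjugate pairs is exhaustive and the parity constraint on $m$ is exact.
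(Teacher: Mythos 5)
Your argument is correct and is essentially the paper's own (the paper simply observes that a $\tau$-fixed effective divisor with $X^\tau=\emptyset$ must be a sum of conjugate pairs, which correspond to points of $X/\tau\cong N_g$); your extra care about the homeomorphism via compact-Hausdorff is a welcome detail the paper omits. One tiny slip in a non-essential parenthetical: with $X^\tau=\emptyset$ the invariant is $a=1$ in the paper's convention, but non-orientability of $X/\tau$ follows anyway since the anti-holomorphic $\tau$ is orientation-reversing and acts freely, and your Euler characteristic count then pins down $N_g$.
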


The cohomology ring of $SP^{m}(N_g) $ was computed by Kallel and Salvatore \cite{KS}.

\begin{thm}\label{nonerientablesymm}
The inclusion $SP^m(N_g) \hookrightarrow SP^{\infty}(N_g) \sim K(H_1(N_g;\Z), 1) \sim (S^1)^g\times \R P^{\infty}$ induces a surjection in cohomology $$ \wedge( h_1,...,h_g)\otimes S(w)  \rightarrow  H^*(SP^m(N_g);\Z_2)$$ where $h_1,...,h_g,w$ all have degree one.  Let $f_i = h_i+w$ for $i=1,...,g$, let $f_{g+1} = w$ and $b=w^2$. Then the relations are generated by 
\begin{equation}\label{KSrelations}
 f_{i_1} ... f_{i_r} b^t =0.
\end{equation}
for all choices $i_1,..,i_r$ of distinct integers from $1$ to $g+1$ inclusive such that $r+t = m+1$.
The $\Z_2$-Poincar\'e series and Euler characteristic statisfy
$$ P_t(SP^m(N_g))) = \sum_{i=0}^{min(g, m)} { g \choose i} (t^i+t^{i+1}+...+t^{2m-i}) =  \sum_{i=0}^{min(g, m)} { g \choose i} \frac{t^{2m-i+1}-t^i}{t-1} $$ 
$$ \chi( SP^m(N_g)) =  (-1)^m{ g-1 \choose m}  $$
\end{thm}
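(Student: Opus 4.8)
\emph{Proof sketch.} The strategy is to run Macdonald's argument for curves (Theorem~\ref{McDThm}) over the closed non-orientable surface $N_g$. Since $N_g$ is a closed surface, $H_1(N_g;\Z)\cong\Z^{g}\oplus\Z/2$ and $H_2(N_g;\Z)=0$, so Dold--Thom gives $SP^{\infty}(N_g)\sim K(H_1(N_g;\Z),1)\sim(S^1)^{g}\times\R P^{\infty}$ and hence $H^{*}(SP^{\infty}(N_g))=\wedge(h_1,\dots,h_g)\otimes S(w)$ with all generators in degree one. Writing $f_i=h_i+w$ for $i\le g$, $f_{g+1}=w$ and $b=w^{2}$, the relation $f_i^{2}=b$ already holds in $SP^{\infty}(N_g)$, so this ring equals $\Z_2[f_1,\dots,f_{g+1}]/(f_i^{2}+f_j^{2})$, which is free over $\Z_2[b]$ on the square-free monomials $f_S=\prod_{i\in S}f_i$. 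The restriction $H^{*}(SP^{\infty}(N_g))\to H^{*}(SP^{m}(N_g))$ is onto: the stable splitting $\Sigma^{\infty}SP^{\infty}Y\simeq\bigvee_{k\ge1}\Sigma^{\infty}\bigl(SP^{k}Y/SP^{k-1}Y\bigr)$ realizes $\tilde H_{*}(SP^{m}(N_g);\Z_2)$ as the summand consisting of the first $m$ wedge factors, compatibly with the inclusion. Everything therefore comes down to: (a) every monomial $f_{i_1}\cdots f_{i_r}b^{t}$ with distinct $i_j$ and $r+t=m+1$ maps to zero in $H^{*}(SP^{m}(N_g))$; and (b) the spanning set $\{f_Sb^{t}\mid |S|+t\le m\}$ of $H^{*}(SP^{m}(N_g))$ is a basis.

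For (a), realize the classes by incidence loci in the closed topological $2m$-manifold $SP^{m}(N_g)$ (which is locally modelled on $SP^{m}(\R^{2})=\C^{m}$). Choose pairwise disjoint embedded core circles $c_1,\dots,c_{g+1}\subset N_g$, one per crosscap; each $c_i$ is one-sided and they represent a basis of $H_1(N_g;\Z_2)$. For $1\le i\le g+1$ put $Z_i=\{D\mid\supp(D)\cap c_i\ne\emptyset\}$, the image of $c_i\times SP^{m-1}(N_g)$, of real codimension one; for a generic $p\in N_g$ put $W_p=\{D\mid D\ge p\}$, the image of $\{p\}\times SP^{m-1}(N_g)$, of codimension two. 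Restricting along $SP^{1}(N_g)=N_g\hookrightarrow SP^{m}(N_g)$ (on which restriction from $SP^{\infty}$ is an isomorphism in degrees $\le1$) shows $[Z_i]=f_i$, since both restrict to the Poincar\'e dual of $c_i$ in $N_g$; and via the collapse map $SP^{m}(N_g)\to SP^{m}(S^{2})=\C P^{m}$ of the finite-dimensional Dold--Thom approximation~(\ref{finiteDoldThom}) the hyperplane class of $\C P^{m}$ pulls back to $b$ and is Poincar\'e dual to $W_p$, so $[W_p]=b$. Now choose the $c_{i_j}$ and the distinct generic points $p_1,\dots,p_t$ mutually disjoint; then $Z_{i_1}\cap\cdots\cap Z_{i_r}\cap W_{p_1}\cap\cdots\cap W_{p_t}$ is empty whenever $r+t=m+1$, since a member of it would be an effective divisor of degree $m$ carrying $m+1$ distinct points. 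After perturbing the (singular) loci to generic cycles their total intersection remains empty, so $f_{i_1}\cdots f_{i_r}b^{t}=[Z_{i_1}]\cdots[Z_{i_r}]\,[W_{p_1}]\cdots[W_{p_t}]=0$.

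For (b), let $A_m=\bigl(\wedge(h)\otimes S(w)\bigr)/J_m$, where $J_m$ is the ideal generated by the monomials of (a). The monomials $\{f_Sb^{t}\mid |S|+t\le m\}$ span $A_m$ (any $f_Sb^{t}$ with $|S|+t\ge m+1$ is divisible by a generator of $J_m$), and a direct count gives $\sum_{m\ge0}P_t(A_m)\,s^{m}=\frac{(1+ts)^{g+1}}{(1-s)(1-t^{2}s)}$. On the other hand Macdonald's generating function for the $\Z_2$-Poincar\'e series of symmetric products---which depends only on the mod $2$ Betti numbers $(b_0,b_1,b_2)=(1,g+1,1)$ of $N_g$---gives $\sum_{m\ge0}P_t(SP^{m}(N_g);\Z_2)\,s^{m}=\frac{(1+ts)^{g+1}}{(1-s)(1-t^{2}s)}$ as well. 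By (a), $J_m$ lies in the kernel of the surjection $\wedge(h)\otimes S(w)\to H^{*}(SP^{m}(N_g))$, so $A_m\twoheadrightarrow H^{*}(SP^{m}(N_g))$ is a surjection of graded $\Z_2$-vector spaces of equal dimension in every degree, hence an isomorphism. This yields both the presentation and the closed forms for $P_t(SP^{m}(N_g))$; setting $t=-1$ collapses the generating function to $(1-s)^{g-1}$, giving $\chi(SP^{m}(N_g))=(-1)^{m}\binom{g-1}{m}$. (Alternatively, since $SP^{m}(N_g)$ is a closed $2m$-manifold, (b) follows from mod $2$ Poincar\'e duality: $b^{m}=[W_{p_1}]\cdots[W_{p_m}]$ is the class of a single point and spans $H^{2m}$, while in complementary degrees $f_Sb^{t}$ pairs nontrivially only with $f_Sb^{m-|S|-t}$, so the spanning set must be a basis.)

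The main obstacle is the geometry underlying (a): the incidence loci $Z_i$ and $W_p$ are singular---$Z_i$ along the divisors with a multiple point on $c_i$, and $W_p$ at $2p$---so identifying $[Z_i]$ and $[W_p]$ precisely and arguing with their intersections require passing to generic smooth representatives and controlling the resulting correction terms; this bookkeeping is the technical core of Kallel--Salvatore's computation \cite{KS}. Deriving the relations instead from the orientation double cover $SP^{m}(N_g)\cong SP^{2m}(Y)^{\sigma}$, where $Y\to N_g$ is the orientation double cover with deck involution $\sigma$, or from a bundle-type description, runs into the same accounting.
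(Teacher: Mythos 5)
The paper offers no proof of this theorem --- it is quoted verbatim from Kallel--Salvatore \cite{KS} --- so there is no internal argument to compare against; your sketch has to stand on its own, and in outline it does. The logic (surjectivity of restriction from $SP^{\infty}(N_g)$ via the stable splitting, geometric verification of the relations, then a dimension count that forces the surjection $A_m\twoheadrightarrow H^*(SP^m(N_g))$ to be an isomorphism) is a correct Macdonald-style proof, and your identifications of $\wedge(h)\otimes S(w)$ with $\Z_2[f_1,\dots,f_{g+1}]/(f_i^2+f_j^2)$ and of the two generating functions check out. Two points need tightening. First, the ``technical core'' you flag in step (a) can be avoided entirely: there is no need to perturb $Z_i$ and $W_p$ to generic representatives, because both classes admit lifts to relative cohomology supported on these closed sets --- $[Z_i]$ is Poincar\'e dual to the pushforward of the mod $2$ fundamental class of $c_i\times SP^{m-1}(N_g)$ and hence lifts to $H^1(SP^m(N_g),SP^m(N_g)\setminus Z_i)$, while $b$ is pulled back from the Thom class of the hyperplane $SP^{m-1}(S^2)\subset \C P^m$ under the collapse map (this is exactly the content of Lemma \ref{c to b}); a product of classes supported on closed sets with empty total intersection lands in $H^*(M,M)=0$. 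No transversality or correction terms are needed. Second, your appeal to ``Macdonald's generating function, which depends only on the mod $2$ Betti numbers'' is too quick when $g$ is even: then $b_1(N_g;\Z_2)=g+1$ is odd and there is no orientable surface with those Betti numbers to compare with, so you must invoke Dold's theorem \cite{Do} in its functorial form (mod $2$ homology of $SP^m$ depends only on mod $2$ homology) and compute for a wedge $\bigl(\bigvee_{g+1}S^1\bigr)\vee S^2$, which does give $\frac{(1+ts)^{g+1}}{(1-s)(1-t^2s)}$. With those two repairs the argument is complete and is, as far as I can tell, close in spirit to the incidence-variety computation in \cite{KS}.
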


\begin{remark}\label{n=o}
When $g\geq 1$, one can use the identity $ { 2g \choose i } =  { 2g-1 \choose i}  +  { 2g-1 \choose i-1} $, to get $P_t(SP^m(N_{2g-1})) = P_t(SP^m(\Sigma_g))$. This can also be deduced from the equality $P_t(N_{2g-1}) = P_t(\Sigma_g)$ by a Theorem of Dold \cite{Do}.
\end{remark}

It will be useful to have a geometric understanding of the generator $b$ in Theorem \ref{nonerientablesymm}. Let $C \subset N_g$ be a loop with non-orientable tubular neighbourhood $U$. Taking the quotient of $N_g$ by the complement of $U$ produces a manifold homeomorphic to a real projective plane $N_g / (N_g \setminus U) \cong \R P^2$. Taking symmetric products determines a map $q_m: SP^m(N_g) \rightarrow SP^m(\R P^2) \cong \R P^{2m}$ where the homeomorphism $SP^m(\R P^2) \cong \R P^{2m}$ is due to Dupont-Lusztig \cite{DP}. Recall that if $m\geq 1$, then $H^*(\R P^{2m})$ is a truncated polynomial ring generated by an element $w \in H^1(\R P^{2m})$. 

\begin{lem}
In terms of the presentation in Theorem \ref{nonerientablesymm}, $b \in H^2( SP^m(N_g))$ is the image of $w^2 \in H^*(\R P^{2m})$ under $q^*_m$.
\end{lem}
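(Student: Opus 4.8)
The plan is to compare the two descriptions of $H^*(SP^m(N_g))$ that have been set up: the intrinsic Kallel–Salvatore presentation in Theorem~\ref{nonerientablesymm}, and the geometry of the collapse map $q_m : SP^m(N_g) \to SP^m(\R P^2) \cong \R P^{2m}$. The key observation is that the diagram of collapse maps $N_g \to \R P^2$ and the map $N_g \to N_g/(N_g\setminus U') \cong S^2$ for an \emph{orientable} tubular neighbourhood (the map underlying the class $\eta$/$w^2$ in the earlier discussion) are compatible with the maps into $SP^\infty$. So I would first identify, at the level of the infinite symmetric product, the pullback under $SP^\infty(N_g) \to SP^\infty(\R P^2) \sim \R P^\infty$ of the degree-one generator: $SP^\infty$ of the collapse map $N_g\to\R P^2$ induces, on $H^1$, the functorial map $H^1(\R P^2;\Z_2)\to H^1(N_g;\Z_2)$ dual to the inclusion $C\hookrightarrow N_g$ of the chosen non-orientable loop. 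Under the Kallel–Salvatore identification $H^1(SP^\infty(N_g))=\langle h_1,\dots,h_g,w\rangle$ with $h_i$ dual to the standard $\R P^2$-summands and $w$ the "extra" class, one checks that the generator $w\in H^1(\R P^{2m})$ pulls back to $w\in H^1(SP^m(N_g))$ under $q_m^*$ — this is essentially the normalization built into the Kallel–Salvatore theorem, and I would phrase the lemma's proof so that it reduces to this normalization plus naturality of cup products.

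Granting $q_m^* (w) = w$, the statement $q_m^*(w^2) = b = w^2$ is then immediate from multiplicativity of $q_m^*$, \emph{provided} one knows that $w^2\in H^2(SP^m(N_g))$ is nonzero and equals the class called $b$ in Theorem~\ref{nonerientablesymm} — but that is exactly how $b$ was defined there ($b = w^2$), so there is nothing further to prove on that side. The only genuine content is therefore the degree-one statement $q_m^*(w)=w$. To pin this down I would argue as follows: the map $q_1 : N_g = SP^1(N_g) \to SP^1(\R P^2) = \R P^2$ is the collapse map itself, and on $H^1$ it sends the generator of $H^1(\R P^2;\Z_2)=\Z_2$ to the Poincaré-dual of $[C]$, which in the basis $\{h_1,\dots,h_g,w\}$ is precisely $w$ (this uses the explicit description of the loop $C$ with non-orientable neighbourhood as representing the class dual to $w$, which is where the geometric meaning of $w$ enters). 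For general $m$, the inclusion $SP^1(N_g)\hookrightarrow SP^m(N_g)$ and $\R P^2 = SP^1(\R P^2)\hookrightarrow SP^m(\R P^2)=\R P^{2m}$ are compatible with $q_1$ and $q_m$, and both inclusions are surjective on $H^1$; since $q_m^*(w)$ and $w$ have the same restriction to $SP^1(N_g)$ and $H^1(SP^m(N_g))\to H^1(SP^1(N_g))$ is injective (it is the restriction to a subspace on which the degree-one generators are already detected, by Macdonald/Kallel–Salvatore), we conclude $q_m^*(w)=w$.

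The main obstacle, and the step I would be most careful about, is the bookkeeping in the previous paragraph: making precise that the class in $H^1(SP^\infty(N_g))$ corresponding to $w$ is the one detected by collapsing onto the non-orientable $\R P^2$-summand containing $C$, i.e. correctly matching the Dold–Thom/Kallel–Salvatore basis $\{h_1,\dots,h_g,w\}$ with the geometric collapse maps. This is purely a naturality argument — the Dold–Thom equivalence $SP^\infty(N_g)\sim K(H_1(N_g;\Z),1)$ is functorial in $N_g$, so the collapse $N_g\to\R P^2$ induces on fundamental groups (equivalently on $H_1$) the map killing all summands except the one carrying $C$ — but one must track which generator that is and confirm it is $w$ rather than some $h_i$ or a sum. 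Once the functoriality of Dold–Thom and the explicit identification of $w$ are invoked, the rest is a short diagram chase and an application of the surjectivity of $H^1$-restriction from $SP^m$ to $SP^1$, both of which are available from Theorems~\ref{McDThm} and \ref{nonerientablesymm} (or their proofs). I do not expect any spectral sequence or hard computation to be needed here; the lemma is a naturality statement dressed up geometrically.
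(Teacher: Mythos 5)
Your overall strategy (compute $q_m^*(w)$ exactly in degree one by Dold--Thom naturality and restriction to $SP^1$, then square) is a genuinely different route from the paper's, but as written it rests on a claim that is not justified and is in general false: that $q_1^*(w)=\mathrm{PD}[C]$ equals the basis element $w$ of the presentation in Theorem \ref{nonerientablesymm}. The lemma allows $C$ to be \emph{any} loop with non-orientable tubular neighbourhood, while the presentation (and hence the class called $w$) is fixed beforehand; there is no ``normalization built into the Kallel--Salvatore theorem'' tying $w$ to your particular $C$. All the hypothesis on $C$ gives you is that its mod $2$ self-intersection is $1$, i.e.\ that $\mathrm{PD}[C]^2\neq 0$. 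Since the classes in $H^1(N_g;\Z_2)$ with non-zero square form the coset $w+\langle h_1,\dots,h_g\rangle$, you can only conclude $q_1^*(w)=w+\sum_{i\in S}h_i$ for some subset $S$. Your conclusion survives only because squaring over $\Z_2$ kills the $h_i$-part, $(w+\sum_{i\in S}h_i)^2=w^2=b$; that observation, together with the fact that $b$ is the \emph{unique} non-zero square in $H^2(SP^m(N_g))$, is the real content, and it is missing from your write-up in favour of the too-strong degree-one identity.

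For comparison, the paper's argument bypasses degree one entirely: $q_m$ is a degree-one map of closed manifolds, so $q_m^*$ is injective in $\Z_2$-cohomology by Poincar\'e duality; hence $q_m^*(w^2)=q_m^*(w)^2$ is a non-zero square in $H^2(SP^m(N_g))$, and the only non-zero square there is $b$. If you keep your route, replace ``$q_m^*(w)=w$'' by ``$q_m^*(w)$ is a degree-one class with non-zero square'' (justified either by the self-intersection of $C$ or by injectivity of $q_m^*$) and then invoke uniqueness of the non-zero square. One further caution: the restriction $H^2(SP^m(N_g))\to H^2(SP^1(N_g))$ is far from injective for $m\geq 2$, so the detection-by-$SP^1$ device genuinely only works in degree one, as your argument implicitly assumes.
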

\begin{proof}
This is trivially true if $m=0$, so assume $m\geq1$. It is easy to check that $q_m: SP^m(N_g) \rightarrow SP^m(\R P^2) $ is a map of degree one, so by Poincar\'e duality it induces an injection in cohomology (remember we are working with $\Z_2$-coefficients). Thus $q_m^*(w^2) = q_m^*(w)^2$ is a non-zero square in $H^2( SP^m(N_g))$. But the only non-zero square in $H^2(SP^m(N_g))$ is $b$ so $q_m^*(w^2)=b$.
\end{proof}

Suppose that $\Sigma = \Sigma_g$ is a orientable surface, and $N = N_{2g+k-1}$ is the non-orientable surface obtained by performing a real blow up at $k$ different points of $\Sigma$. We have a quotient map $bd: N \rightarrow \Sigma$ inducing maps on symmetric products $bd_m : SP^m(N) \rightarrow SP^m(\Sigma)$.

\begin{lem}\label{c to b}
In terms of the presentations in Theorem \ref{McDThm} and Theorem \ref{nonerientablesymm}, $bd_m^*(\eta)= b$.
\end{lem}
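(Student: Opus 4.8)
The plan is to exploit the naturality of Macdonald's and Kallel--Salvatore's presentations with respect to the collapsing construction, reducing the identity $bd_m^*(\eta) = b$ to the analogous statement for a single real blow-up, and ultimately to the previous lemma identifying $b$ via $q_m^*(w^2)$. First I would set up the relevant commutative square of spaces. The real blow-up $bd\colon N \to \Sigma$ contracts, near each of the $k$ blown-up points, a non-orientable tubular neighbourhood; collapsing the complement of one such neighbourhood $U$ in $N$ gives $N/(N\setminus U) \cong \R P^2$, while collapsing the complement of the corresponding disk in $\Sigma$ gives $\Sigma/(\Sigma\setminus \mathrm{disk}) \cong S^2 \cong \C P^1$. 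These collapse maps are compatible with $bd$ up to homotopy, since $bd$ is a homeomorphism away from the blow-up locus and the $\R P^2$ (resp. $S^2$) quotient only remembers the local picture. Taking $m$-fold symmetric products yields a homotopy-commutative square relating $bd_m\colon SP^m(N)\to SP^m(\Sigma)$, $q_m\colon SP^m(N)\to SP^m(\R P^2)\cong \R P^{2m}$, and the map $c_m\colon SP^m(\Sigma)\to SP^m(\C P^1)\cong \C P^m$ used to define $\eta$, together with the standard quotient $\C P^{2m} \to$ wait, rather the fixed-point-type map $\R P^{2m}\to\C P^m$ induced by $\C P^1 = S^2 = \R P^2 \cup_{\R P^1}(\text{disk})$; concretely $SP^m(\R P^2) \to SP^m(\C P^1)$ is the standard inclusion $\R P^{2m}\hookrightarrow \C P^{2m}$ composed with a collapse, and on degree-two cohomology it sends the generator of $H^2(\C P^m)$ to $w^2 \in H^2(\R P^{2m})$.

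The key steps, in order, are: (1) verify the homotopy-commutativity of the square of collapse maps at the level of surfaces, which is where the geometric content sits; (2) pass to symmetric products, using that $SP^m(-)$ is a homotopy functor; (3) chase the class $\eta \in H^2(\C P^m) \cong H^2(SP^m(\C P^1))$ around the square: its image in $H^2(SP^m(\Sigma))$ is by definition $\eta$ in Macdonald's presentation, while its image in $H^2(\R P^{2m}) = H^2(SP^m(\R P^2))$ is $w^2$ by the computation $H^*(\C P^1)\to H^*(\R P^2)$ sending the positive generator to the square of the generator; (4) apply the preceding lemma, which identifies $q_m^*(w^2)$ with $b$, to conclude $bd_m^*(\eta) = b$. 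One subtlety to address is that $N = N_{2g+k-1}$ is built from $k$ blow-ups, not one, so I should fix attention on a single blown-up point and the corresponding neighbourhood $U$; the class $b$ in Theorem~\ref{nonerientablesymm} is independent of which non-orientable loop one uses (all $f_i = h_i + w$ and $f_{g+1}=w$ have the same square $b = w^2$), so this loss of symmetry is harmless.

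The main obstacle I anticipate is step (1): making precise the claim that the two collapse maps on $N$ and $\Sigma$ fit into a homotopy-commutative square with the blow-up map $bd$. The blow-up $bd$ is not itself a quotient of the form ``collapse an open set,'' so one must argue that after collapsing the complement of $U$ in $N$ and the complement of the disk in $\Sigma$, the induced map $\R P^2 \to S^2$ is (homotopic to) the standard degree-one collapse $\R P^2 \to \R P^2/\R P^1 = S^2$ — equivalently, that $bd$ restricted to $U$ is, up to homotopy rel boundary, the orientation double-cover-type identification collapsing $\R P^1$. This is believable from the explicit local model of a real blow-up (replacing a disk by a Möbius band that double-covers a smaller disk), but writing it cleanly requires care. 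Once this local statement is in hand, everything else is formal naturality of symmetric products and a one-line cohomology computation, exactly parallel to the proof of the previous lemma.
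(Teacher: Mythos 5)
Your proposal is correct and follows essentially the same route as the paper: the same commutative square of collapse maps $N\to\Sigma$, $N\to\R P^2$, $\Sigma\to S^2$, passage to symmetric products, and the observation that $\eta$ pulls back through $\R P^{2m}$ to $q_m^*(w^2)$, identified with $b$ as the unique non-zero square (the paper re-runs the degree-one/injectivity argument rather than citing the previous lemma, but this is the same computation). The homotopy-commutativity of the square that you flag as the main obstacle is exactly what the paper disposes of with ``we can arrange the maps $bd$ and $q$'' so that the diagram commutes on the nose.
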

\begin{proof}
This is trivially true if $m=0$ so assume $m \geq 1$. We can arrange the maps $bd$ and $q$ so that  we have a commutative diagram
$$\xymatrix{  N \ar[r]^{bd}  \ar[d]^q & \Sigma \ar[d]^{\overline{q}}  \\  \R P^2  \ar[r]^{\overline{bd}} & S^2= \C P^1  } $$
where all maps are quotient maps, $\bar{q}$ is the quotient map defined (\ref{collapsemap}). Taking symmetric products yields
$$\xymatrix{  SP^m(N) \ar[r]^{bd_m}  \ar[d]^{q_m} & SP^m(\Sigma) \ar[d]^{\overline{q}_m}  \\  \R P^{2m} \cong SP^m(\R P^2)  \ar[r]^{\overline{bd}_m} & SP^m(S^2) = \C P^m. } $$
Since $bd_m$ has degree one, it induces an injection on cohomology. By definition, $\eta = \overline{q}^*(c)$  where $ c \in H^2(\C P^n)$ is the generator. Therefore $bd^*_m(\eta) = q^*_m( \overline{bd}_m^*(c)) = bd^*_m( w^2) = bd^*_m(w)^2$ is a non-zero square, so it must equal $b$. 

\end{proof}

Later we will encounter a case where $X$ has two path components that are transposed by $\tau$. The following is proven the same way as Proposition \ref{Nofixed}.

\begin{prop}\label{Nofixed2}
Suppose that $X$ is a disconnected union of two genus $g$ curves $X = \Sigma_g \coprod \Sigma_g$ and that $\tau$ transposes these two path components.  Then
$$ SP^m(X)^{\tau} \cong \begin{cases} \emptyset & \text{ if $m$ is odd} \\ SP^{m/2}(X/\tau) = SP^{m/2}(\Sigma_g) & \text{ if $m$ is even} .\end{cases} $$
\end{prop}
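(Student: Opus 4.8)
The plan is to run exactly the argument used for Proposition~\ref{Nofixed}, the only change being that the conjugation now swaps the two components of $X$ instead of fixing points of a connected curve. Since $\tau$ sends each copy of $\Sigma_g$ to the other, no point of $X$ is $\tau$-fixed, so $\tau$ acts freely; the quotient map $\pi\colon X\to X/\tau$ is then a (necessarily trivial) double cover with connected base, and identifying the two disjoint copies of $\Sigma_g$ shows $X/\tau\cong\Sigma_g$ (transport the complex structure along the homeomorphism $\Sigma_g^{(1)}\hookrightarrow X\xrightarrow{\pi}X/\tau$ given by one of the sheets). An effective divisor $D=\sum n_i p_i$ on $X$ is $\tau$-invariant iff $\tau$ permutes its support preserving multiplicities, and since no $p_i$ is $\tau$-fixed this forces the support to decompose into conjugate pairs $\{p,\tau(p)\}$ carrying equal coefficients. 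In particular $\deg D$ is even, which already proves the proposition when $m$ is odd.

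For $m=2k$ I would build the comparison map explicitly. Let $\sigma\colon X/\tau\to SP^2(X)$ send $q$ to the degree-two divisor $\pi^{-1}(q)$; using a local trivialization of $\pi$ one checks $\sigma$ is continuous, and its image lies in $SP^2(X)^\tau$. Applying $SP^k(-)$ to $\sigma$ and then the monoid multiplication $SP^k\bigl(SP^2(X)\bigr)\to SP^{2k}(X)$ of $\coprod_{j\ge 0}SP^j(X)$ produces a continuous map
$$ \Psi_k\colon SP^k(X/\tau)\xrightarrow{\ SP^k(\sigma)\ }SP^k\bigl(SP^2(X)\bigr)\xrightarrow{\ \mathrm{add}\ }SP^{2k}(X), $$
which lands in $SP^{2k}(X)^\tau$ because a sum of $\tau$-invariant divisors is $\tau$-invariant. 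By the previous paragraph every $\tau$-fixed effective divisor of degree $2k$ has the form $\sum_j m_j\bigl(p_j+\tau(p_j)\bigr)$ with the $p_j$ distinct points of $\Sigma_g^{(1)}$ and $\sum_j m_j=k$, hence equals $\Psi_k\bigl(\sum_j m_j[\pi(p_j)]\bigr)$; uniqueness of this representation follows from the fact that $2A=2B$ implies $A=B$ for effective divisors. Thus $\Psi_k$ is a continuous bijection from the compact space $SP^k(X/\tau)$ onto the Hausdorff space $SP^{2k}(X)^\tau$, hence a homeomorphism (and a diffeomorphism, since $\Psi_k$ is smooth and injective and both sides are manifolds of real dimension $m$). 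Combined with $X/\tau\cong\Sigma_g$ this gives $SP^m(X)^\tau\cong SP^{m/2}(\Sigma_g)$.

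I expect no genuine obstacle — this is why the paper records it as "proven the same way as Proposition~\ref{Nofixed}." The only point requiring a little care, exactly as there, is upgrading the evident set-theoretic identification to a homeomorphism (indeed a diffeomorphism), and this is handled by the compactness-plus-dimension argument above; it works precisely because conjugate points of $X$ always lie in different components, hence are distinct, so no collisions occur when passing to or from the quotient.
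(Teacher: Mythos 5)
Your argument is correct and follows the same route the paper intends: since $\tau$ acts freely, every $\tau$-fixed effective divisor is a sum of conjugate pairs with equal multiplicities (forcing $m$ even), and conjugate pairs biject with points of $X/\tau\cong\Sigma_g$, giving the identification $SP^m(X)^\tau\cong SP^{m/2}(X/\tau)$. Your explicit map $\Psi_k$ and the compact-to-Hausdorff argument just make the paper's one-line identification precise, so no changes are needed (the homeomorphism is all that is required; the parenthetical ``smooth injective between equidimensional manifolds implies diffeomorphism'' is not a valid inference in general, but it is inessential here).
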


\subsection{Equivariant cohomology and localization}

Consider a $\Z/2$ representation on a $\Z_2$-vector space $V$ defined by a linear map $\phi \in GL(V)$ such that $\phi^2 = Id_V$.   The map $Id_V +\phi: V \rightarrow V$ squares to zero.  Let $Z= ker(Id_V + \phi) = V^{\Z/2}$ be the fixed point subspace, let $B = im(Id_V +\phi)$ and define $V_{triv} := Z/B$. Then $V$ decomposes into a direct sum  $\rank(V_{triv})$ many trivial representations of rank one and $\rank(B)$ many indecomposable (not irreducible!) representations of rank two given by the matrix $\begin{bmatrix} 0 & 1 \\ 1 & 0\end{bmatrix}$.  We have natural isomorphisms in group cohomology $$H^j(\Z/2; V) = \begin{cases} V^{\Z/2} & \text{ if $j=0$} \\ V_{triv} & \text{ if $j >0$} \end{cases}. $$
As a module over $H^*(\Z/2; \Z_2) = \Z_2[w]$, $H^*(\Z/2; V)$ decomposes as the direct sum of a free module $ V_{triv} \otimes_{\Z_2} \Z_2[w]$ and a torsion module concentrated in degree zero.

Suppose now that $\Z/2$ acts on a space $Y$, inducing $\Z/2$-representations $H^j(Y;\Z_2)$. The Serre spectral sequence for the Borel construction converges to $H_{\Z/2}^*( Y;\Z_2)$ and satisfies

\begin{equation}\label{SSS}
 E_2^{i,j} = \begin{cases} H^i(Y;\Z_2)^{\Z/2} & \text{ if $j=0$} \\ H^i(Y;\Z_2)_{triv} & \text{ if $j >0$}. \end{cases}
\end{equation}

In particular, this means that $E_2^{*,*}$  is isomorphic to $H^*(Y)_{triv} \otimes_{\Z_2} H^*(B\Z/2)$ modulo $H^*(\Z/2)$-torsion. Combining this with the localization theorem (see \cite{AP} \S 3.1) yields the following.

\begin{prop}\label{tightness}
Let $Y$ be a compact $\Z/2$-manifold. Then $$dim( H^*(Y;\Z_2)_{triv}) \geq dim(H^*( Y^{\Z/2};\Z_2))$$ with equality if and only if the Serre spectral sequence of the Borel construction stabilizes on page $E_2^{*,*}$. We call $Y$ ``weakly tight" when equality holds.
\end{prop}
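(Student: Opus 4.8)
The plan is to combine the structural description of $H^*(\Z/2;V)$ as an $H^*(\Z/2;\Z_2)=\Z_2[w]$-module with the Borel localization theorem. First I would recall the basic setup: since $Y$ is a compact $\Z/2$-manifold, the localization theorem (\cite{AP}, \S 3.1) says that after inverting $w$ the restriction map $H^*_{\Z/2}(Y;\Z_2)[w^{-1}] \to H^*_{\Z/2}(Y^{\Z/2};\Z_2)[w^{-1}]$ is an isomorphism, and that $Y^{\Z/2}$ carries the trivial action so $H^*_{\Z/2}(Y^{\Z/2};\Z_2) = H^*(Y^{\Z/2};\Z_2)\otimes_{\Z_2}\Z_2[w]$ is a free $\Z_2[w]$-module of rank $\dim H^*(Y^{\Z/2};\Z_2)$. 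Hence the rank of the free part of $H^*_{\Z/2}(Y;\Z_2)$ over $\Z_2[w]$ equals $\dim H^*(Y^{\Z/2};\Z_2)$.

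Next I would bound the rank of the free part of $H^*_{\Z/2}(Y;\Z_2)$ from above by $\dim H^*(Y;\Z_2)_{triv}$. This follows from the Serre spectral sequence (\ref{SSS}) of the Borel fibration $Y \to Y_{h\Z/2} \to B\Z/2$: by the displayed computation of $E_2$, the $E_2$-page is, modulo $\Z_2[w]$-torsion concentrated in the $j=0$ column, isomorphic to the free module $H^*(Y)_{triv}\otimes_{\Z_2}\Z_2[w]$. Since the differentials are $\Z_2[w]$-linear and can only decrease the rank of the free part (a differential out of or into the free part kills free summands, never creates them, and the torsion part cannot produce new free summands at later pages), the free rank of $E_\infty$ over $\Z_2[w]$ is at most that of $E_2$, namely $\dim H^*(Y)_{triv}$. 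As $E_\infty$ is the associated graded of $H^*_{\Z/2}(Y;\Z_2)$ for a filtration by $\Z_2[w]$-submodules, the free rank of $H^*_{\Z/2}(Y;\Z_2)$ is at most $\dim H^*(Y)_{triv}$. Combining with the previous paragraph gives $\dim H^*(Y^{\Z/2};\Z_2) \le \dim H^*(Y)_{triv}$.

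For the equality statement, I would argue that equality of free ranks forces the spectral sequence to degenerate. If the sequence does \emph{not} stabilize at $E_2$, then some nonzero differential $d_r$ ($r\ge 2$) exists; since the torsion of $E_2$ is concentrated in the $j=0$ row and $d_r$ raises $j$ by $r-1\ge 1$, any such differential must have its source or target in the free part, and being $\Z_2[w]$-linear and nonzero it strictly decreases the free rank. Hence the free rank of $E_\infty$ would be strictly smaller than that of $E_2$, contradicting equality. Conversely, if the sequence stabilizes at $E_2$, then $E_\infty = E_2$ has free rank exactly $\dim H^*(Y)_{triv}$, which by localization equals $\dim H^*(Y^{\Z/2};\Z_2)$, so equality holds.

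The main obstacle I anticipate is making the ``a $\Z_2[w]$-linear differential can only decrease the free rank'' bookkeeping fully rigorous: one must track how, passing from $E_r$ to $E_{r+1}$, the $\Z_2[w]$-module structure interacts with taking homology of $d_r$ — in particular verifying that torsion in $E_r$ cannot conspire to promote a torsion class to a free class at $E_{r+1}$, and that the filtration on $H^*_{\Z/2}(Y;\Z_2)$ relates the free rank of the total space to that of $E_\infty$ correctly (exact sequences of finitely generated $\Z_2[w]$-modules are additive on free ranks since $\Z_2[w]$ is a PID). None of this is deep, but it is the technical heart, and I would likely isolate it as a short lemma about filtered $\Z_2[w]$-modules and $\Z_2[w]$-linear spectral sequences before assembling the proof.
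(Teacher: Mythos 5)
Your proposal is correct and follows essentially the same route as the paper: the localization theorem identifies $\dim H^*(Y^{\Z/2};\Z_2)$ with the free rank of $H^*_{\Z/2}(Y;\Z_2)$ over $H=\Z_2[w]$, and the Serre spectral sequence description (\ref{SSS}) bounds that rank by $\dim H^*(Y;\Z_2)_{triv}$, with equality exactly when the sequence collapses at $E_2$. The paper avoids the PID bookkeeping you flag as the technical heart by tensoring the whole spectral sequence with the quotient field $Q$ of $H$ at the outset, so that everything becomes a spectral sequence of $Q$-vector spaces and ``free rank can only decrease'' is just the usual statement that total dimension is non-increasing across pages.
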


\begin{proof}
This is an easy consequence of the localization theorem (and is proven in the special case when $\Z/2$ acts trivially on $H^*(Y)$ in \cite{AP} Theorem 3.10.4). Let $Q$ be the quotient field of $H^*(B\Z/2) =H$. Then we have an isomorphism $$ H^*_{\Z/2}(Y) \otimes_H Q \cong H^*_{\Z/2}(Y^{\Z/2}) \otimes_H Q = H^*(Y^{\Z/2}) \otimes_{\Z_2} Q$$
which has the same dimension as $H^*(Y^{\Z/2}) $. On the other hand, there is a spectral sequence
$$ E^{*,*}_2 \otimes_H Q = H^*(Y)_{triv} \otimes_{\Z_2} Q  \Rightarrow H_{\Z/2}^*(Y) \otimes_H Q .$$
The result follows.
\end{proof}

Given a real curve $(X,\tau)$ and $m \geq 0$ there is an induced involution on $Pic_m(X) \times \C P^m$ sending $( [D], [v]) \rightarrow ([\tau(D)] , [\bar{v}])$. 

\begin{prop}
Suppose $X^{\tau}$ is non-empty. Then $Pic_m(X) \times \C P^m$ is weakly tight with respect to the induced involution. 
\end{prop}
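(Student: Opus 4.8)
The plan is to reduce to the two factors by showing that weak tightness is preserved under products, and then to verify each factor separately. First I would record that for compact $\Z/2$-manifolds $Y_1,Y_2$ one has $H^*(Y_1\times Y_2;\Z_2)\cong H^*(Y_1;\Z_2)\otimes_{\Z_2}H^*(Y_2;\Z_2)$ as $\Z_2[\Z/2]$-modules and $(Y_1\times Y_2)^{\Z/2}=Y_1^{\Z/2}\times Y_2^{\Z/2}$. Every finite-dimensional $\Z_2[\Z/2]$-module is a direct sum of copies of the trivial module and of the free module $\Z_2[\Z/2]$, and for such a module $M$ the dimension $\dim M_{triv}$ equals the number of trivial summands; a one-line check on the three products of indecomposables gives $\dim(M\otimes N)_{triv}=\dim M_{triv}\cdot\dim N_{triv}$. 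Hence $\dim H^*(Y_1\times Y_2;\Z_2)_{triv}$ and $\dim H^*((Y_1\times Y_2)^{\Z/2};\Z_2)$ are the products of the corresponding numbers for $Y_1$ and $Y_2$, so by Proposition~\ref{tightness} (which supplies the inequality $\dim H^*(Y_i;\Z_2)_{triv}\ge\dim H^*(Y_i^{\Z/2};\Z_2)$ for each $i$) the product $Y_1\times Y_2$ is weakly tight as soon as $Y_1$ and $Y_2$ are. Thus it suffices to show that $Pic_m(X)$ and $\C P^m$ are each weakly tight.

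For $\C P^m$ with complex conjugation this is immediate: conjugation acts by $-1$ on $H^2(\C P^m;\Z)$ and hence trivially on $H^*(\C P^m;\Z_2)=\Z_2[c]/(c^{m+1})$, so $H^*(\C P^m;\Z_2)_{triv}=H^*(\C P^m;\Z_2)$ has total dimension $m+1$, while $(\C P^m)^{\mathrm{conj}}=\R P^m$ also has $\dim H^*(\R P^m;\Z_2)=m+1$. (This is the familiar fact that $\C P^m$ is a conjugation space.)

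For $T:=Pic_m(X)$, note that since $X^{\tau}\neq\emptyset$ there is a real divisor of degree $m$, so $T^{\tau}\neq\emptyset$ and after translation $\tau$ acts by a linear involution $A$ of the torus $T=V/\Lambda$. Write $H^*(T;\Z_2)=\wedge^{*}W$ with $W:=H^1(T;\Z_2)$ in degree one, and split $W$ into $a$ trivial and $b$ free $\Z_2[\Z/2]$-summands. Since $\wedge^{*}$ of the trivial module consists of two trivial summands and $\wedge^{*}(\Z_2[\Z/2])$ consists of two trivial summands plus one free summand (over $\Z_2$ the top class $e_1\wedge e_2$ is fixed), multiplicativity of the number of trivial summands gives $\dim(\wedge^{*}W)_{triv}=2^{a+b}=2^{\dim W^{\tau}}$. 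It then remains to compute $\dim W^{\tau}$ and $\dim H^*(T^{\tau};\Z_2)$ and see that they match. With $V^{\pm}=\ker(A\mp\mathrm{Id})$ and $\Lambda^{\pm}=\Lambda\cap V^{\pm}$ one checks $2\Lambda\subseteq\Lambda^{+}\oplus\Lambda^{-}\subseteq\Lambda$, so $[\Lambda:\Lambda^{+}\oplus\Lambda^{-}]=2^{e}$ for some $e$, and $\dim W^{\tau}=\dim(\Lambda/2\Lambda)^{\tau}=\dim(\Lambda^{+}\oplus\Lambda^{-})/2\Lambda=2g-e$; on the other hand the identity component of $T^{\tau}$ is $V^{+}/\Lambda^{+}\cong(S^1)^{\dim V^{+}}$, while $\pi_0(T^{\tau})\cong\Lambda^{-}/(A-\mathrm{Id})\Lambda$ has order $2^{\dim V^{-}-e}$, so $\dim H^*(T^{\tau};\Z_2)=2^{\dim V^{-}-e}\cdot 2^{\dim V^{+}}=2^{2g-e}$, which agrees. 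Therefore $Pic_m(X)$ is weakly tight. (Alternatively one may invoke the identification $Pic_m(X)^{\tau}\cong(\Z/2)^{n-1}\times(S^1)^g$ from \S\ref{Topology of real curves} together with the fact that $\dim H^1(X;\Z_2)^{\tau}=g+n-1$.)

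Combining the three steps proves the Proposition. The only substantive point is the $Pic_m(X)$ factor, and the subtlety there is a genuine mod-$2$ phenomenon: $\dim H^1(T;\Z_2)^{\tau}$ is strictly larger than the rank of the rational $(+1)$-eigenspace of $A$ whenever free $\Z_2[\Z/2]$-summands occur, so I would not try to read it off the eigenvalue decomposition but instead compute it as the lattice index $2g-e$ and match it against the number of components and the dimension of the fixed locus. Everything else reduces to bookkeeping with the two indecomposable $\Z_2[\Z/2]$-modules and the Künneth theorem.
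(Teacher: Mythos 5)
Your argument is correct, and the reduction is sound: weak tightness is multiplicative because with field coefficients the K\"unneth isomorphism is one of $\Z_2[\Z/2]$-modules, the number of trivial summands is multiplicative under tensor product, and fixed-point sets of products are products of fixed-point sets; the $\C P^m$ factor is immediate; and your lattice computation for the torus factor checks out ($(\Lambda/2\Lambda)^{A}=(\Lambda^{+}\oplus\Lambda^{-})/2\Lambda$, $\pi_0(T^{A})\cong\Lambda^{-}/(A-\mathrm{Id})\Lambda$ of order $2^{\dim V^{-}-e}$, giving $2^{2g-e}$ on both sides). Where you differ from the paper is in how the torus factor is handled: the paper does not prove a general statement about real tori but instead imports from Gross--Harris the fact that $\mathrm{Id}+\tau^{*}$ has rank $g+1-n$ on $H^{1}(X;\Z_2)$, writes down an explicit basis adapted to the involution, reads off $H^{*}(Pic_m(X)\times\C P^{m})_{triv}\cong\wedge(x_1,\dots,x_{2b},d_1,\dots,d_{g-b})\otimes S(c)$ of dimension $2^{g+b}(1+m)$, and matches this against $Pic_m(X)^{\tau}\times\R P^{m}\cong\coprod_{2^{b}}(S^1)^{g}\times\R P^{m}$ --- essentially the route you mention parenthetically at the end. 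Your main argument buys generality and self-containedness: it shows any real torus with a fixed point is weakly tight without knowing the invariants $(g,n,a)$ or the structure of $Pic_m(X)^{\tau}$, at the cost of the lattice-index bookkeeping; the paper's version is shorter because it leans on facts it needs anyway elsewhere (the rank of $\mathrm{Id}+\tau^{*}$ and the component count $2^{n-1}$), and its explicit basis $x_i,y_i,z_i$ is reused in the proof of Proposition \ref{equivsurje}. Both are complete proofs.
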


\begin{proof}

Suppose that $X^{\tau}$ consists of $n \geq 1$ real circles and define $b=n-1$. The induced action of $\tau_*$ on $H^1(X) \cong H^1(Pic_m(X))$ is described in Gross-Harris (\cite{GH} \S 4). In particular, $Id + \tau^*$ has rank $g+1 - n$, so $H^1(Pic_m(X))_{triv}$ has dimension $2 n -2 = 2b$.  It follows that we can choose a (not necessarily symplectic) basis of $H^1(Pic_m(X))$ so that 
$$ H^1(Pic_m(X)) = \Z_2\{x_1,...,x_{2b}, y_1, z_1,...,y_{g-b}, z_{g-b}\} $$ 
where the involution $\tau^*$ fixes $x_1,...,x_{2b}$ and transposes $y_i$ and $z_i$ for $i \in 1,..., g-b$. 

It follows that 

$$ H^*( Pic_m(X) \times \C P^m)_{triv} \cong    \wedge ( x_1,...,x_{2b}, d_1,...,d_{g-b}) \otimes S(c) $$
where $d_i = y_iz_i$ for $i = 1,..., g-b$.  In particular $ \dim( H^*(Pic_m(X) \times \C P^m)_{triv}) = 2^{g+b}(1+m)$.
Meanwhile, the fixed point set satisfies

$$ (Pic_m(X) \times \C P^m)^{\tau} = Pic_m(X)^\tau \times \R P^m \cong \coprod_{2^b} (S^1)^g \times \R P^m$$
so $ \dim( H^*((Pic_m(X) \times \C P^m)^\tau) =  2^{g+b}(1+m)$ and the result follows.
\end{proof}

\begin{prop}\label{equivsurje}
Suppose that $X^{\tau} \neq \emptyset$. Then $SP^m(X)$ is weakly tight and the $\tau$-equivariant map (\ref{finiteDoldThom}) induces a surjection in equivariant cohomology
$$H^*_{\Z/2}( Pic_m(X) \times \C P^m;\Z_2) \twoheadrightarrow H^*_{\Z/2}(SP^{m}(X);\Z_2). $$
\end{prop}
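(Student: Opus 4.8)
We outline the argument. Put $P:=Pic_m(X)\times\C P^m$ and let $f\colon SP^m(X)\to P$ be the $\tau$-equivariant map (\ref{finiteDoldThom}); by Theorem~\ref{McDThm}, $f^*$ is surjective on ordinary $\Z_2$-cohomology. The plan is to compare the Borel-construction Serre spectral sequences of $SP^m(X)$ and of $P$ through the induced morphism of multiplicative spectral sequences $\phi_r\colon E_r^{*,*}(P)\to E_r^{*,*}(SP^m(X))$, exploiting that $P$ is weakly tight, so $E_2^{*,*}(P)$ already equals $E_\infty^{*,*}(P)$. The key reduction is that it suffices to show $\phi_2$ is surjective: since $d_2$ vanishes on $E_2^{*,*}(P)$, the identity $d_2\phi_2=\phi_2 d_2$ together with surjectivity of $\phi_2$ forces $d_2$ to vanish on $E_2^{*,*}(SP^m(X))$, and the same argument on each subsequent page kills all higher differentials in the target. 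By Proposition~\ref{tightness} this shows $SP^m(X)$ is weakly tight; moreover $\phi_\infty=\phi_2$ is then surjective, and since the filtration of $H^*_{\Z/2}(SP^m(X);\Z_2)$ is finite in each total degree, surjectivity on $E_\infty$ propagates to surjectivity of $f^*$ in equivariant cohomology.

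To prove $\phi_2$ is surjective I would first locate the essential part of the $E_2$-page. By (\ref{SSS}), $E_2^{*,*}(Y)$ is the group cohomology $H^*(\Z/2;H^*(Y;\Z_2))$, with bottom row $H^*(Y)^{\Z/2}$; cup product with the polynomial generator $w\in H^1(B\Z/2)$ carries the bottom row $H^0(\Z/2;H^*(Y))=H^*(Y)^{\Z/2}$ \emph{onto} the next row $H^1(\Z/2;H^*(Y))=H^*(Y)_{triv}$ and induces isomorphisms between consecutive rows thereafter, so $E_2^{*,*}(Y)$ is generated as a ring by $w$ and by $H^*(Y)^{\Z/2}$. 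As $\phi_2$ hits $w$ by naturality, $\phi_2$ is surjective if and only if $f^*$ restricts to a surjection $H^*(P;\Z_2)^{\Z/2}\twoheadrightarrow H^*(SP^m(X);\Z_2)^{\Z/2}$. Writing $K:=\ker\big(f^*\colon H^*(P)\to H^*(SP^m(X))\big)$ and using the long exact $\Z/2$-cohomology sequence of $0\to K\to H^*(P)\to H^*(SP^m(X))\to 0$, this in turn is equivalent to the vanishing of the connecting map $H^*(SP^m(X))^{\Z/2}\to H^1(\Z/2;K)=K_{triv}$, i.e.\ to the injectivity of $K_{triv}\to H^*(P)_{triv}$.

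This last point is the heart of the proof, and it is not formal---a surjection of $\Z_2[\Z/2]$-modules need not remain surjective on invariants (for instance $\Z_2[\Z/2]\twoheadrightarrow\Z_2$). Here one has an explicit model: $H^*(P;\Z_2)=\wedge(H^1(X;\Z_2))\otimes\Z_2[\eta]/(\eta^{m+1})$, the involution acting through $\tau^*$ on $H^1(X;\Z_2)$ and trivially on $\eta$, and $K$ is the image in this truncated ring of the ideal generated by Macdonald's relations (\ref{McDrelations}). Using the Gross--Harris description \cite{GH} of $\tau^*$ on $H^1(X;\Z_2)$ I would choose a symplectic basis $x_1,\dots,x_g,y_1,\dots,y_g$ in which $\tau^*$ fixes $x_i,y_i$ for $i\le b:=n-1$ and interchanges $x_l\leftrightarrow y_l$ for $l>b$; then $\wedge(H^1(X;\Z_2))=\bigotimes_i\wedge(x_i,y_i)$ splits $\Z/2$-equivariantly as a trivial summand plus a free summand (any $\Z_2$-tensor factor of a free $\Z_2[\Z/2]$-module is free), and hence so does $H^*(P;\Z_2)$. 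The remaining task is to run through the generators (\ref{McDrelations}) in this basis---separating the $\tau$-fixed ones (those with all $x$- and $y$-indices $\le b$), which lie in the trivial summand, from those occurring in two-element $\tau$-orbits---and to read off from this description that $K_{triv}$ injects into $H^*(P)_{triv}$. I expect this combinatorial step, matching Macdonald's relations against the $\tau$-adapted basis and confirming the injectivity, to be the main obstacle; the spectral-sequence formalism surrounding it is routine.
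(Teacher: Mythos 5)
Your overall framework is the same as the paper's: compare the Borel--construction Serre spectral sequences, observe that surjectivity of $\phi_2$ plus collapse of the source (weak tightness of $Pic_m(X)\times\C P^m$) forces collapse of the target and surjectivity on $E_\infty$, hence on $H^*_{\Z/2}$. Your reduction of surjectivity of $\phi_2$ to surjectivity of $f^*$ on $\Z/2$-invariants (equivalently, injectivity of $K_{triv}\to H^*(P)_{triv}$) is also correct, and you rightly identify that this step is not formal. The problem is that you then stop: the entire content of the proposition lives in exactly that step, and your proposal only sketches a strategy for it --- match Macdonald's relations against a $\tau$-adapted basis and ``read off'' the injectivity --- without carrying it out. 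This is a genuine gap, not a routine verification: the relations (\ref{McDrelations}) are written in a symplectic basis, whereas the $\tau$-adapted basis of Gross--Harris need not be symplectic over $\Z_2$ (the paper explicitly calls its adapted basis ``not necessarily symplectic''), so the ideal of relations does not obviously decompose along your trivial-plus-free splitting of $\wedge(H^1(X;\Z_2))$, and the bookkeeping of which relation generators lie in which summand is exactly where the argument could fail or become unmanageable.

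The paper avoids this combinatorics entirely with two observations you did not use: (i) the relations all live in degree $\ge m+1$, so $f^*$ is an isomorphism (hence equivariantly an isomorphism on invariants and on $(-)_{triv}$) in degrees $\le m$; and (ii) the mod $2$ K\"ahler class $\omega=\eta+\sum x_iy_i$ is $\tau$-invariant, and hard Lefschetz makes $\phi^*(\omega)^{m-j}\cup(-)\colon H^j(SP^m(X))\to H^{2m-j}(SP^m(X))$ an equivariant isomorphism, so the commuting square transports the surjectivity on invariants from low degrees to high degrees. If you want to complete your proof along your own lines you would need to actually establish the injectivity of $K_{triv}\to H^*(P)_{triv}$; otherwise I recommend replacing your final paragraph with the Lefschetz argument.
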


\begin{proof}
Consider the corresponding morphism in non-equivariant cohomology
$$ \phi^*: H^*( Pic_m(X) \times \C P^m;\Z_2) \rightarrow H^*(SP^{m}(X);\Z_2).$$
The Macdonald relations (\ref{McDrelations}) occur in degree $m+1$ or higher, so $\phi^*$ is an isomorphism in degrees less than $m+1$. Thus the induced map $\phi^*_{triv}:  H^*( Pic_m(X) \times \C P^m )_{triv} \cong H^*(SP^m( X))_{triv}  $ is also an isomorphism in degree less than $m+1$. Stiefel

The mod 2 reduction of the standard K\"ahler class on $Pic_m(X) \times \C P^m$ defined (using the earlier notation of Theorem \ref{McDThm}) by $ \omega = \eta + \sum_{i=1}^g x_i y_i \in H^2(Pic_m(X) \times \C P^m)$ is fixed by $\tau$. The mod 2 reduction of the Lefshetz maps $H^j(SP_m(X)) \rightarrow H^{2m-i}(SP_m(X))$ induced by the image of $\phi^*(\omega) \in H^2(SP^m(X))^{\Z/2}$ are isomorphisms for all $j \leq m$ yielding $\Z/2$-equivariant commutative diagrams

$$\xymatrix{ H^j(Pic_m(X) \times \C P^m) \ar[d]^{\cong} \ar[rr]^{\omega^{m-j}\cup } && H^{2m-j}(Pic_m(X) \times \C P^m ) \ar[d]  \\
H^{j}(SP^m(X)) \ar[rr]^{\cong}_{\phi^*(\omega)^{m-j}\cup  }   &&  H^{2m-j}( SP^{m}(X) )}.$$
We deduce surjections
$$ H^{2m-j}(Pic_m(X) \times \C P^m)^{\Z/2} \twoheadrightarrow H^{2m-j}(SP^m( X))^{\Z/2} .  $$
$$ H^{2m-j}(Pic_m(X) \times \C P^m)_{triv} \twoheadrightarrow H^{2m-j}(SP^m( X))_{triv} .  $$

Given Proposition \ref{SSS}, this implies that (\ref{finiteDoldThom}) induces a surjection on page 2 of the Serre spectral sequences. Since the spectral sequence for $Pic_m(X) \times \C P^m$ collapses, it must also collapse for  $SP^m(X)$ and the result follows.
\end{proof}

\begin{remark}
For $m\geq 1$, it follows that $\dim( H^*(SP^m(X)^{\tau})) = \dim( H^*(SP^m(X)))$ if and only if $X^{\tau}$ is a union of $g+1$ real circles, the maximum allowed by Harnack's inequality (also Proposition \ref{tightness}). In the language of Biswas-D'Mello \cite{BD}, the means that $SP^m(X)$ is an M-variety if and only if $X$ is an M-curve. The if direction was proven in  \cite{BD} when $m=2,3$ or $m\geq 2g-1$, and extended to all $g$ by Franz \cite{MF}.
\end{remark}

\begin{prop}
The restriction of (\ref{finiteDoldThom}) to fixed points determines a cohomology surjection
\begin{equation}\label{fixdecoh}
 H^*( Pic_m(X)^{\tau} \times \R P^m;\Z_2) \rightarrow  H^*(SP^{m}(X)^{\tau} ;\Z_2).
\end{equation}
\end{prop}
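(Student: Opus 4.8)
The plan is to imitate the proof of Proposition~\ref{equivsurje}, but now working directly with the fixed–point sets instead of with Borel constructions. Write $A := Pic_m(X)\times\C P^m$ and $B := SP^m(X)$, so that $A^\tau = Pic_m(X)^\tau\times\R P^m$ and $B^\tau = SP^m(X)^\tau$, and let $\phi^\tau\colon B^\tau\to A^\tau$ denote the restriction of (\ref{finiteDoldThom}). As in Proposition~\ref{equivsurje}, the surjectivity of $(\phi^\tau)^*$ is not visible all at once: one should isolate two ingredients, (i) that $(\phi^\tau)^*$ is an isomorphism in a range of low degrees, and (ii) a mod~$2$ ``hard Lefschetz'' isomorphism on $B^\tau$, and then run a commutative–square argument identical in form to the one used for $SP^m(X)$ to deduce surjectivity in the complementary range.

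First I would pin down the Lefschetz class. The mod~$2$ reduction of the K\"ahler class $\omega = \eta + \sum_{i=1}^g x_iy_i\in H^2(A)$ restricts on $A^\tau$ to $w^2$, where $w\in H^1(\R P^m)$ is the generator pulled back along $A^\tau\to\R P^m$: indeed $\eta$ restricts to $w^2$ by the standard computation for $\R P^m\subset\C P^m$, while $\sum x_iy_i$, being the mod~$2$ reduction of the principal polarization of $Pic_m(X)$, dies on $Pic_m(X)^\tau$, because the real locus is a Lagrangian submanifold (it is the fixed set of an anti-holomorphic, hence anti-symplectic, involution of half the dimension), so the polarization $2$-form restricts to zero, and since $H^2(Pic_m(X)^\tau;\Z)$ is torsion free the integral — hence mod~$2$ — class vanishes. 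Thus on $B^\tau$ the class $\bar\omega := (\phi^\tau)^*(w)\in H^1(B^\tau)$ satisfies $(\phi^\tau)^*(\omega)=\bar\omega^{\,2}$, and $\bar\omega$ is the square root of the restricted K\"ahler class that plays the role of $\phi^*(\omega)$ from Proposition~\ref{equivsurje}, but in half the degree — matching the fact that $B^\tau$ has dimension $m$ rather than $2m$.

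Next I would run the square argument. Since $\phi^*\colon H^*(A)\to H^*(B)$ is an isomorphism below degree $m+1$ by Macdonald's relations (Theorem~\ref{McDThm}), one extracts that $(\phi^\tau)^*\colon H^j(A^\tau)\to H^j(B^\tau)$ is an isomorphism for $j$ in a low range, which handles those degrees directly; and applying the localization theorem to the square obtained by restricting the equivariant surjection of Proposition~\ref{equivsurje} to fixed points (using weak tightness of $A$ and $B$) shows the cokernel of $(\phi^\tau)^*$ is $w$-torsion, hence $(\phi^\tau)^*$ is already surjective in all sufficiently high degrees. To cover the intermediate degrees I would use, for $j\le m/2$, the commutative diagram
$$\xymatrix{ H^j(A^\tau) \ar[d]^{\cong} \ar[rr]^{\bar\omega^{\,m-2j}\cup} && H^{m-j}(A^\tau) \ar[d] \\ H^j(B^\tau) \ar[rr]^{\cong}_{\bar\omega^{\,m-2j}\cup} && H^{m-j}(B^\tau) } $$
exactly as in Proposition~\ref{equivsurje}: the left vertical and bottom horizontal arrows being isomorphisms forces the right vertical arrow to be surjective, and one checks that the low-degree range, the Lefschetz range $m-j$, and the high-degree range together exhaust $0,\dots,m$.

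The main obstacle is step~(ii): justifying the mod~$2$ hard Lefschetz isomorphism $\bar\omega^{\,m-2j}\cup\colon H^j(B^\tau)\xrightarrow{\ \sim\ }H^{m-j}(B^\tau)$ for $j\le m/2$. This is the fixed-point analogue of the rational hard Lefschetz property of $SP^m(X)$ and is not formal. The natural route is to deduce it from the $\tau$-equivariant hard Lefschetz isomorphism $\phi^*(\omega)^{\,m-j}\cup$ on $SP^m(X)$ (which is $\tau$-equivariant because $\phi^*(\omega)$ is $\tau$-invariant) by the same degree-halving mechanism that pervades the paper — combining weak tightness of $SP^m(X)$ with Duistermaat's theorem \cite{D}, much as the Morse indices are halved in (\ref{PPMform}). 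A secondary point requiring care is the precise low-degree comparison in step~(i) and the bookkeeping that its range, together with the Lefschetz range and the high-degree surjectivity from localization, leaves no gap.
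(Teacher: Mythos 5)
There is a genuine gap, and it sits exactly where you flag it: step (ii), the mod~$2$ ``hard Lefschetz'' isomorphism $\bar\omega^{\,m-2j}\cup\colon H^j(SP^m(X)^\tau)\to H^{m-j}(SP^m(X)^\tau)$ for a degree-one class $\bar\omega$, is asserted but never established. Neither Duistermaat's theorem nor weak tightness supplies it: those are additive statements (perfection of the restricted Morse function, equality of total Betti numbers with $\dim H^*(Y)_{triv}$) and say nothing about cup products on the fixed locus; there is no ``degree-halving'' mechanism that transports the Lefschetz isomorphism on $H^*(SP^m(X))$ to one on $H^*(SP^m(X)^\tau)$ in half the degree. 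Since the whole commutative-square argument for the intermediate degrees hinges on this unproven isomorphism, the proposal as written does not prove the proposition. (The computation that $\omega$ restricts to $w^2$ on the fixed locus is fine, but it is not needed.)

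The irony is that the ingredient you mention only in passing --- localization applied to the equivariant surjection of Proposition~\ref{equivsurje} --- already finishes the proof outright, and this is precisely what the paper does. Let $H=H^*(B\Z/2)$ and $Q=\Frac(H)$. The localization theorem gives vertical isomorphisms in
$$\xymatrix{ H^*_{\Z/2}(Pic_m(X)\times\C P^m)\otimes_H Q \ar[d]^{\cong}\ar[r] & H^*_{\Z/2}(SP^m(X))\otimes_H Q \ar[d]^{\cong}\\ H^*(Pic_m(X)^\tau\times\R P^m)\otimes_{\Z_2}Q \ar[r] & H^*(SP^m(X)^\tau)\otimes_{\Z_2}Q }$$
and the top row is surjective by Proposition~\ref{equivsurje}, hence so is the bottom row. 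But the bottom row is simply $(\phi^\tau)^*\otimes_{\Z_2}\id_Q$, an extension of scalars along the field extension $\Z_2\subset Q$, so its surjectivity is equivalent to that of $(\phi^\tau)^*$ itself --- in \emph{all} degrees, not merely ``sufficiently high'' ones. Your weaker reading (``the cokernel is $w$-torsion'') comes from viewing the cokernel inside the Borel construction; viewed instead as $\coker((\phi^\tau)^*)\otimes_{\Z_2}Q$, it vanishes if and only if $\coker((\phi^\tau)^*)$ does. With this observation the low-degree comparison, the Lefschetz square, and all the attendant bookkeeping become unnecessary.
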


\begin{proof}
Let $Q$ be the quotient field of $H:= H^*(B \Z/2)$. We have a commutative diagram

$$\xymatrix{ H^*_{\Z/2}(Pic_m(X) \times\C P^m) \otimes_H  Q \ar[d]^{\cong} \ar[r] & H^*_{\Z/2}(SP^{m}(X)) \otimes_H Q \ar[d]^{\cong} \\
H^*(Pic_m(X)^{\tau} \times \R P^m ) \otimes_{\Z_2} Q \ar[r] &  H^*(SP^{m}(X)^{\tau} ) \otimes_{\Z_2} Q } $$
where the vertical isomorphisms are from the localization theorem.
The top arrow is a surjection by Proposition \ref{equivsurje}, so the bottom arrow is as well. But the bottom arrow is just an extension of scalars of the map (\ref{fixdecoh}) so it must also be a surjection. 
\end{proof}

Restricting the surjection $(\ref{fixdecoh})$ to path components yields the following corollary.

\begin{cor}\label{g+1Gens}
For every path component $P \subseteq SP^m(X)^\tau$, there exists a surjective ring homomorphism $$ \wedge(h_1,...,h_g) \otimes S(w) \twoheadrightarrow H^*(P) $$
where $h_1,...,h_g, w$ each have degree one (compare Theorem \ref{nonerientablesymm}).
\end{cor}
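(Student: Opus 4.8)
The plan is to deduce Corollary \ref{g+1Gens} directly from the surjection (\ref{fixdecoh}) by restricting to a single path component $P$ of $SP^m(X)^\tau$ and identifying the cohomology of the relevant component of $Pic_m(X)^\tau \times \R P^m$. The key input is that $X^\tau \neq \emptyset$ (which, in the cases we care about, is forced by $\deg(D)$ odd via (\ref{d=w=k})), so (\ref{fixdecoh}) applies. Since $SP^m(X)^\tau \to Pic_m(X)^\tau \times \R P^m$ is injective on $\pi_0$ (the Abel-Jacobi map is injective on $\pi_0$, as recorded in \S\ref{Topology of real curves}, and $\R P^m$ is connected), the component $P$ maps into a unique path component $P'$ of $Pic_m(X)^\tau \times \R P^m$, and a diagram chase shows $H^*(P') \twoheadrightarrow H^*(P)$ is still surjective: indeed $H^*(Pic_m(X)^\tau\times\R P^m)$ splits as the direct sum of the $H^*$ of its components, compatibly with the splitting of $H^*(SP^m(X)^\tau)$, so surjectivity of (\ref{fixdecoh}) forces surjectivity component-by-component.

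Next I would identify $H^*(P')$. From the analysis in \S\ref{Topology of real curves}, $Pic_m(X)^\tau \cong (\Z/2)^{n-1} \times (S^1)^g$ as a Lie group, so each of its path components is diffeomorphic to the torus $(S^1)^g$, with mod 2 cohomology the exterior algebra $\wedge(h_1,\dots,h_g)$ on $g$ degree-one generators. Since $m \geq 1$ in the nontrivial cases (and $SP^0(X)$ is a point, where the statement is trivial with all generators mapping to zero), $\R P^m$ contributes $H^*(\R P^m) \cong \Z_2[w]/(w^{m+1})$, a quotient of the polynomial ring $S(w)$ on a single degree-one generator $w$. Hence $H^*(P') \cong \wedge(h_1,\dots,h_g)\otimes \Z_2[w]/(w^{m+1})$, which is itself a quotient of $\wedge(h_1,\dots,h_g)\otimes S(w)$. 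Composing the surjection $\wedge(h_1,\dots,h_g)\otimes S(w) \twoheadrightarrow H^*(P')$ with the surjection $H^*(P') \twoheadrightarrow H^*(P)$ gives the desired surjective ring homomorphism, with $h_1,\dots,h_g,w$ all of degree one.

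The main thing to be careful about — though it is not really an obstacle — is the bookkeeping with path components: ensuring that restricting the cohomology surjection (\ref{fixdecoh}) to a single component is legitimate. This is clean precisely because $aj$ is injective on $\pi_0$, so that distinct components of $SP^m(X)^\tau$ land in distinct components of the target; combined with the fact that for a disjoint union mod 2 cohomology is the product of the factors' cohomologies, the restriction of a surjective ring map to the summand indexed by one component is again surjective onto the cohomology of that component. One should also note the edge case $m=0$ separately, where everything is a point and the statement holds vacuously (interpret $S(w)$ as $\Z_2$ in degree zero, or observe $w$ maps to $0$). The parenthetical "compare Theorem \ref{nonerientablesymm}" is just flagging that this presentation has the same shape as the one for $SP^m(N_g)$; we do not need the relations here, only the generating set, so no further work is required.
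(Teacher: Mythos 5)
Your argument is correct and is essentially the paper's own proof: the Corollary is obtained by restricting the surjection (\ref{fixdecoh}) to a single path component, identifying that component of $Pic_m(X)^\tau \times \R P^m$ as $(S^1)^g \times \R P^m$ whose mod~2 cohomology is a quotient of $\wedge(h_1,\dots,h_g)\otimes S(w)$. Your extra bookkeeping about $\pi_0$ and the edge case $m=0$ is fine (indeed component-by-component surjectivity only needs that a connected component of the source lands in a single component of the target, so even injectivity on $\pi_0$ is not strictly required).
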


Pursuing these methods a bit further, it is possible to derive a formula for the Poincar\'e polynomial of $SP^m(X)^{\tau}$.  However, the use of spectral sequences obscures the multiplicative structure of the cohomology ring, so we instead take a different approach.

\subsection{Path components and cup products}

As explained in \S \ref{Topology of real curves}, the path components of $SP^m(X)^\tau$ are classified by Stiefel-Whitney classes $w_1(D)$ which assigns a value $0$ or $1$ to each real circle. Given $k,s\geq 0$ such that $k+2s =m$, denote by $P_X(k,s) \subseteq SP^{k+2s}(X)^{\tau}$ a path component for which $k$ of the real circles are sent to $1$. We call these the \emph{odd circles}.

\begin{lem}
$P_X(0,1) \cong N_g$ is a non-orientable surface with orientation double cover homeomorphic to $X$.  
\end{lem}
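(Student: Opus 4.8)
The plan is to describe the component $P_X(0,1)\subseteq SP^{0+2}(X)^\tau=SP^2(X)^\tau$ explicitly, recognize it as a closed surface, and pin down its homeomorphism type from its Euler characteristic and non-orientability. First I would enumerate the fixed divisors: an effective divisor $p+q$ of degree two is fixed by $\tau$ exactly when either $\tau(p)=q$ (a conjugate pair, allowing the degenerate case $2p$ with $p\in X^\tau$) or $p,q\in X^\tau$. In the latter case, by the equivalence (i)--(iii) of \S\ref{Topology of real curves}, $w_1(p+q)$ vanishes on every real circle iff $p$ and $q$ lie on the same real circle; if they lie on distinct real circles then $p+q$ lies in a component with two odd circles. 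Hence $P_X(0,1)$ is the union of the space of conjugate pairs $(X\setminus X^\tau)/\tau$ and the subspaces $SP^2(C)$ as $C$ ranges over the real circles, the latter being glued to the former along the diagonals $\{2p:p\in C\}$. Writing $X/\tau$ for the compact surface with boundary $\overline{(X\setminus X^\tau)/\tau}$ (a surface with boundary $X^\tau$, cf.\ \cite{GH}) and using the classical fact that $SP^2(S^1)$ is a Möbius band, we get $P_X(0,1)=(X/\tau)\cup_{\partial}\bigsqcup_C SP^2(C)$, glued along all of $\partial(X/\tau)$; in particular $P_X(0,1)$ is path-connected, since conjugate pairs degenerate onto the diagonals and each $SP^2(C)$ is connected. (If $X^\tau=\emptyset$, the lemma follows at once from Proposition \ref{Nofixed}, which identifies $SP^2(X)^\tau$ with $X/\tau$, whose orientation double cover is the free quotient $X\to X/\tau$ by the orientation-reversing involution $\tau$; so assume $X^\tau\neq\emptyset$.)

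Next I would check that $P_X(0,1)$ is a closed topological $2$-manifold by writing down local models. Choosing a coordinate $z$ on $X$ in which $\tau$ is $z\mapsto\bar z$ near a real point, together with matched coordinates near a conjugate pair, a direct computation shows the induced involution on $SP^2(X)$ is locally conjugate to $(z,w)\mapsto(\bar z,\bar w)$ on $\C^2$ near a divisor $p+q$ with $p\neq q$ both real, and to $(z,z')\mapsto(\bar z',\bar z)$ on $\C^2$ near a conjugate pair $p+\tau p$ with $p\neq\tau p$; near a degenerate divisor $2p$ with $p\in X^\tau$ one passes to the elementary symmetric coordinates $SP^2(\C)\cong\C^2$, in which $\tau$ becomes $(e_1,e_2)\mapsto(\bar e_1,\bar e_2)$. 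In each case the fixed locus is a real $2$-plane, and near $2p$ this plane consists entirely of divisors of $P_X(0,1)$ (the conjugate pairs and the pairs of distinct real points on the circle through $p$ fitting together across the discriminant parabola). Thus $P_X(0,1)$, being a connected component of the compact space $SP^2(X)^\tau$, is a compact boundaryless surface.

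Finally I would compute the Euler characteristic and decide orientability. Multiplicativity of $\chi$ in the free double cover $X\setminus X^\tau\to(X\setminus X^\tau)/\tau$, together with $\chi(SP^2(S^1))=0$, gives $\chi(P_X(0,1))=\chi(X)/2=1-g$; and since $SP^2(C)\subseteq P_X(0,1)$ is an embedded Möbius band, $P_X(0,1)$ is non-orientable. By the classification of closed surfaces it is therefore the $(g+1)$-fold connected sum $N_g=\R P^2\#\cdots\#\R P^2$, and its orientation double cover, being the unique connected orientable double cover, is the closed orientable surface of genus $g$, which is homeomorphic to $X$.

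The step that requires the most care is the local analysis at the degenerate divisors $2p$, $p\in X^\tau$: here $SP^2(X)$ is not locally a product, since the diagonal is a branch locus, so one must use the symmetric-function coordinates both to see that the fixed locus is a smooth plane and that the conjugate-pair locus and the real-pair locus glue across it leaving no boundary — equivalently, to verify that the Möbius pieces $SP^2(C)$ are attached to $X/\tau$ exactly along their boundary circles, so that the result is a closed surface.
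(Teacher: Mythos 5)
Your proof is correct and follows essentially the same route as the paper: both decompose $P_X(0,1)$ as the quotient surface $X/\tau$ with a M\"obius band $SP^2(C)$ glued along each boundary circle (the paper asserts the manifold structure without the local models you supply, and identifies the orientation double cover directly by fattening the real circles into cylinders rather than via $\chi$ and the classification of surfaces). These are only cosmetic differences; the argument is the same.
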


\begin{proof}
Clearly $P_X(0,1)$ is a 2-manifold without boundary. If $X^{\tau}$ is empty then $P_X(0,1)$ is naturally identified with $X/\tau \cong N_g$ (Proposition \ref{Nofixed}). Otherwise $X/\tau$ is a surface with one boundary component for each real circle in $X^{\tau}$ and $P_X(0,1)$ can be constructed by attaching a copy of $SP^2(S^1)$ to each boundary component.  Since $SP^2(S^1)$ is a Mobius band, we see that $P_X(0,1)$ is a non-orientable surface without boundary. Its orientation double cover is obtained by taking $X$ and fattening up each real circle to a cylinder, producing a surface homeomorphic to $X$.
\end{proof}

\begin{prop}\label{symprodeen}
The forgetful map $f: SP^s( P_X(0,1)) \rightarrow P_X(0,s)$ induces an isomorphism in $\Z_2$-cohomology.
In particular,  there exists a ring isomorphism
$$  H^*(P_X(0,s)) \cong H^*(SP^{s}(N_g)).$$
\end{prop}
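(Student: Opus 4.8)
The plan is to show that the forgetful map $f\colon SP^s(P_X(0,1))\to P_X(0,s)$ is a homeomorphism away from a subset of small enough dimension to not affect $\Z_2$-cohomology, or — more robustly — to exhibit $f$ as a $\Z_2$-homology equivalence by a degree/fibre argument, and then invoke Theorem \ref{nonerientablesymm} applied to the non-orientable surface $N_g = P_X(0,1)$ (established in the preceding lemma) to read off the ring structure. Concretely, a point of $P_X(0,s)$ is an effective $\tau$-invariant divisor $D$ of degree $k+2s = 2s$ (here $k=0$, so no odd real circles) whose Stiefel–Whitney class vanishes on every real circle; such a $D$ decomposes uniquely as a sum of conjugate pairs $p+\bar p$ with $p\notin X^\tau$, together with an \emph{even} multiplicity of points lying on each real circle. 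A conjugate pair $p+\bar p$ corresponds to a point of $X/\tau$ lying off the boundary, i.e. an interior point of $N_g$; a pair of points on a real circle $C$ corresponds to an unordered pair in $S^1$, i.e. a point of $SP^2(S^1)$, which is exactly the Möbius band glued onto the boundary component of $X/\tau$ coming from $C$ — this is precisely the picture used to build $N_g$ in the previous lemma. Summing $s$ such ``weight-two'' blocks gives a point of $SP^s(N_g)$, and this correspondence is $f$.

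First I would make the decomposition above precise and check that $f$ is continuous, proper, and surjective, and that it is a bijection over the dense open locus where all $s$ blocks are distinct interior points of $N_g$ (equivalently, $D$ consists of $s$ distinct conjugate pairs off $X^\tau$). The only failure of injectivity is over divisors with repeated blocks or with points on a real circle, and over the boundary $\partial(X/\tau)$ inside $N_g$; in each case the fibre of $f$ is a point or a small cell, and the ``bad'' locus in both source and target has real codimension at least $1$. Rather than chase dimensions delicately, the cleanest route is: $f$ is a proper map of compact spaces of the same dimension $2s$ which is generically one-to-one, hence of degree one mod $2$, so by Poincaré duality $f^*$ is injective on $\Z_2$-cohomology; for surjectivity of $f^*$ one shows the fibres of $f$ are $\Z_2$-acyclic (each fibre is either a point, a symmetric product $SP^j(S^1)$ for the blocks forced onto a real circle, or a product of such, all of which are $\Z_2$-acyclic since $SP^j(S^1)$ is a Möbius band or a point), whence a Leray/Vietoris–Begle argument gives that $f^*$ is an isomorphism. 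Combining injectivity and surjectivity, $f$ is a $\Z_2$-cohomology isomorphism of rings.

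Given that, the ring isomorphism $H^*(P_X(0,s))\cong H^*(SP^s(N_g))$ is immediate once we identify $P_X(0,1)\cong N_g$, which is the content of the lemma just proved; Theorem \ref{nonerientablesymm} then supplies the explicit presentation and Poincaré series. I expect the main obstacle to be the $\Z_2$-acyclicity of the fibres of $f$ over the stratum where some of the weight-two blocks are supported on the real circles: one must verify that $SP^j(S^1)$ is $\Z_2$-acyclic for all $j$ (it is contractible for $j\le 1$ and is a Möbius band homotopy equivalent to $S^1$ for $j=2$, so in general one needs the classical computation $SP^j(S^1)\simeq S^1$, hence $H^*(SP^j(S^1);\Z_2)$ is \emph{not} acyclic — so I would instead treat the boundary collar $SP^2(S^1)$-block contribution by observing that the map $f$ restricted over the boundary of $N_g$ is itself a homeomorphism, since an even divisor on a single real circle $C$ is literally the same data as a point of $SP^{(\deg)/2}$ of the Möbius band block). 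The honest version of the argument therefore bookkeeps three strata of $SP^s(N_g)$ — interior, boundary-collar, and mixed — shows $f$ is a homeomorphism over the first two and a $\Z_2$-equivalence over the generic part of the third by the degree-one/Poincaré-duality argument, then patches via Mayer–Vietoris. This is routine but slightly tedious; the conceptual content is entirely in the previous lemma's description of $N_g = P_X(0,1)$.
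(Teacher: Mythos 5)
Your first half --- $f$ is a degree-one map of closed $2s$-manifolds, hence $f^*$ is injective on $\Z_2$-cohomology by Poincar\'e duality --- is exactly the paper's argument. The gap is in the surjectivity half. The Vietoris--Begle route cannot work, and not only for the reason you flag: the fibres of $f$ are finite sets (not copies of $SP^j(S^1)$), and over a divisor carrying $2j\ge 4$ distinct points on a single real circle the fibre consists of the $(2j-1)!!$ ways of grouping those points into $j$ pairs, so $R^0f_*\Z_2$ fails to be the constant sheaf there. Worse, this locus is \emph{open} of full dimension $2s$ in $P_X(0,s)$: a divisor with $2j$ points on real circles and $s-j$ conjugate pairs depends on $2j+2(s-j)=2s$ real parameters, so it cannot be discarded as a small stratum. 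Your fallback claim that $f$ is a homeomorphism over the boundary-collar stratum is also false --- the relevant comparison is the natural map $SP^j(SP^2(S^1))\to SP^{2j}(S^1)$, which is generically $(2j-1)!!$-to-one (these odd local degrees are, incidentally, why the mod $2$ degree of $f$ is still one). So the concluding ``stratify and patch by Mayer--Vietoris'' is not routine, and as written the argument does not close.

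The paper obtains surjectivity by a different, essentially algebraic step that bypasses the fibres of $f$ entirely: Corollary \ref{g+1Gens} (a consequence of the equivariant-cohomology and localization analysis of $SP^m(X)$ in the preceding subsection) shows that $H^*(P_X(0,s))$ is generated by $g+1$ classes of degree one, just as $H^*(SP^s(N_g))$ is by Theorem \ref{nonerientablesymm}; combined with the injectivity of $f^*$ this forces $f^*$ to be onto. If you want to keep your geometric description of $f$, you still need to import that generation statement (or an equivalent upper bound on $H^*(P_X(0,s))$) rather than attempting to control the fibres directly.
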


\begin{proof}
It is easy to see that $f$ has degree one, so by Poincar\'e duality $f^*$ is injective. Since both rings are generated by a basis of $g+1$ elements in degree one (Corollary \ref{g+1Gens}), $f^*$ must also be surjective.  
\end{proof}

\begin{remark}
I suspect that $f$ is in fact a homotopy equivalence, but I have not been able to prove it.
\end{remark}

Next, we consider the general case $P_X(k,s)$.  Given $(X,\tau)$ a real curve of genus $g$ and select $k$ odd real circles $S^1_1,..., S^1_k$. Let $\overline{X}$ be the singular surface defined by collapsing each of the $S_i^k$ to distinct point $*_i$. Let $X'$ be the non-singular surface obtained by ``normalizing" $\overline{X}$ which has the effect of replacing each collapse point $*_i$ by a pair of points $S^0_i$. This $X'$ is a surface of genus $g-k$ if the odd circles are non-separating, and a disconnected union of two surfaces of genus $(g-k+1)/2$ if the odd circles are separating. We have a diagram
\begin{equation}\label{XXXdiagram}
 \xymatrix{ X \ar[rd] && X' \ar[ld] \\ & \overline{X} &  }
\end{equation}
restricting to diffeomorphisms 
\begin{equation}\label{bijection}
X \setminus \coprod_{i=1}^k S^1_i \cong \overline{X} \setminus \coprod_{i=1}^k *_i \cong X' \setminus \coprod_{i=1}^k S^0_i.
\end{equation} 

There are unique involutions $\bar{\tau}, \tau'$ on $\overline{X}, X'$ respectively making (\ref{XXXdiagram}) equivariant. For $0 \leq k' \leq k$ define $P_{\overline{X}}(k',s)$ to be the image of $P_X(k',s)$ under the induced map $SP^m(X) \rightarrow SP^m(\overline{X})$.   

\begin{lem}
There is natural isomorphism $ P_{\overline{X}}(0,s) \cong P_{X'}(0,s)$.
\end{lem}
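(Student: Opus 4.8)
The plan is to establish the isomorphism $P_{\overline{X}}(0,s) \cong P_{X'}(0,s)$ by realizing both as quotients of the same configuration-type space and comparing them via the diagram (\ref{XXXdiagram}). Recall that $\overline{X}$ is obtained from $X'$ by gluing each pair of points $S^0_i = \{p_i, q_i\}$ to a single point $*_i$; the map $\nu: X' \to \overline{X}$ is the normalization, a bijection away from the $S^0_i$. Passing to $s$-fold symmetric products gives $SP^s(\nu): SP^s(X') \to SP^s(\overline{X})$, which is again a bijection over the locus of divisors whose support misses all the glued points. The component $P_{X'}(0,s)$ consists of $\bar\tau'$-fixed effective divisors with no odd circles, i.e. supported on conjugate pairs away from $X'^{\tau'}$ together with possibly points on the even real circles; in any case such divisors never meet the finitely many points $p_i, q_i$ (which lie on the collapsed circles), so $SP^s(\nu)$ carries $P_{X'}(0,s)$ into $SP^s(\overline{X})^{\bar\tau}$.

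First I would check that $SP^s(\nu)$ restricts to a homeomorphism $P_{X'}(0,s) \to P_{\overline{X}}(0,s)$. Since $P_{\overline{X}}(0,s)$ is \emph{defined} as the image of $P_X(0,s)$ under $SP^s(X) \to SP^s(\overline X)$, I would trace a $\bar\tau$-fixed divisor $\bar D$ in this image back through (\ref{bijection}): away from the finitely many singular points of $\overline X$ the three surfaces are canonically identified, and a divisor with no odd circles on $X$ stays away from the collapsed circles, hence corresponds to a well-defined divisor on $X'$ via the second diffeomorphism in (\ref{bijection}). This gives a set-theoretic inverse to $SP^s(\nu)|_{P_{X'}(0,s)}$; continuity in both directions follows because away from the singular locus all maps in (\ref{XXXdiagram}) are local diffeomorphisms, and the symmetric-product topology is determined locally.

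The step I expect to be the main obstacle is \textbf{checking that the image really is all of $P_{\overline X}(0,s)$ and that no collision phenomena at the singular points spoil the bijection} — that is, ruling out the possibility that two distinct divisors on $X'$ (one using $p_i$, one using $q_i$) map to the same divisor on $\overline X$ and thereby make $SP^s(\nu)$ fail to be injective on the relevant component. The resolution is exactly the observation above: divisors in $P_{X'}(0,s)$ have no points on $S^0_i$ at all, because $S^0_i$ lies over an \emph{odd} circle of the original data and $P_{X'}(0,s)$ is the component with zero odd circles (so zero points on those circles). Thus the problematic locus is simply not met, and $SP^s(\nu)$ is a bijection there. Once injectivity and surjectivity onto $P_{\overline X}(0,s)$ are in hand, the equivariance built into (\ref{XXXdiagram}) together with the openness of the complement of the singular locus upgrades this to the asserted natural homeomorphism, hence a ring isomorphism in $\Z_2$-cohomology. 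I would close by noting this is natural in the sense that it intertwines the forgetful/Abel–Jacobi maps, which is what the subsequent argument will use.
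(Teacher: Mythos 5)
There is a genuine gap: your key claim that divisors in $P_{X'}(0,s)$ (equivalently, in $P_{\overline{X}}(0,s)$) never meet the glued points is false, and it rests on a misreading of ``zero odd circles.'' A real circle is \emph{odd} with respect to a divisor when the divisor has \emph{odd} degree supported on it; the component $P_X(0,s)$ with zero odd circles therefore consists of $\tau$-fixed divisors carrying an \emph{even} (not zero) number of points on each real circle, including the collapsed circles $S^1_1,\dots,S^1_k$. Consequently a divisor in $P_{\overline{X}}(0,s)$ may well have $2q_i>0$ points sitting at the singular point $*_i$, and a divisor in $P_{X'}(0,s)$ may contain multiples of the conjugate pair $p_i+q_i$ (note $\tau'$ \emph{swaps} the two points of $S^0_i$, so $p_i+q_i$ is a perfectly admissible real divisor with no odd-circle contribution). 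Your proposed map $SP^{2s}(\nu)$ restricted away from the singular locus is therefore not surjective onto $P_{\overline{X}}(0,s)$, and the ``collision phenomena'' you declared vacuous are exactly the nontrivial part of the statement.

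The fix is the paper's one-line argument: a $\bar\tau$-fixed divisor in $P_{\overline{X}}(0,s)$ has an even number $2q_i$ of points at each $*_i$, and among all lifts to $X'$ distributing these between $p_i$ and $q_i$, the unique $\tau'$-fixed one puts exactly $q_i$ points at each; away from the $*_i$ one uses the identification (\ref{bijection}) as you do. This even-splitting observation is what makes the correspondence a well-defined bijection (and then a homeomorphism) on the fixed-point components, rather than the claim that the singular locus is avoided. The rest of your outline (continuity via local identifications off the singular set, naturality) is fine once this is repaired, though note also that the relevant symmetric power is $SP^{2s}$, since $P_{X'}(0,s)\subseteq SP^{2s}(X')^{\tau'}$.
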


\begin{proof}
$P_{\overline{X}}(0,s)$ consists of $\bar{\tau}$-symmetric sums of points in $\overline{X}$ for which there are an even number on each real circle and on each $*_i$.  Such a sum corresponds to an element in $P_{X'}(0,s)$ which simply divides the points on $*_i$ evenly between the two components of $S^0_i$. 
\end{proof}

Consider the map
\begin{equation}\label{quasifib0}
\pi: P_X(k,s) \rightarrow P_{\overline{X}}(0,s) \cong P_{X'}(0,s)
\end{equation}
obtained by projecting $P_X(k,s) \rightarrow P_{\overline{X}}(k,s)$ then subtracting one point from each $*_i$.

\begin{prop}\label{quasifib}
The map (\ref{quasifib0}) is a quasi-fibration, with homotopy fibre $P_X(k,0) \cong (S^1)^k$. 
\end{prop}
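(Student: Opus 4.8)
The plan is to exhibit the map $\pi$ of (\ref{quasifib0}) as a quasi-fibration by the standard open-cover criterion of Dold-Thom, and then to identify the fibre directly. First I would describe the fibre over a point. A point of $P_{\overline{X}}(0,s) \cong P_{X'}(0,s)$ is a $\bar\tau$-symmetric effective divisor of degree $2s$ supported away from the $*_i$ (after the normalization identification), i.e. an element of $P_{\overline X}(0,s)$ with no mass at any $*_i$. Its preimage under $\pi$ consists of all $\tau$-symmetric divisors on $X$ of the appropriate type which, after projecting to $\overline X$ and subtracting one point from each $*_i$, give back this divisor. Away from the odd circles $S^1_i$ the divisor is rigidly determined by the base point (using the diffeomorphism (\ref{bijection})); the only freedom is in how one distributes the ``extra'' single point forced to lie on each odd circle $S^1_i$. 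Since a $\tau$-fixed divisor supported on the single circle $S^1_i \subset X^\tau$ of odd degree one is just a point of $S^1_i$, the fibre is naturally $S^1_1 \times \cdots \times S^1_k \cong (S^1)^k$, which is exactly $P_X(k,0)$ by definition (it is a path component of $SP^k(X)^\tau$ with all $k$ circles odd and $s=0$, hence one point on each odd circle). I would record this identification carefully, including checking that the $\tau$-symmetry and the ``$k$ odd circles'' condition on $P_X(k,s)$ force precisely one uncancelled point on each $S^1_i$ modulo cancelling conjugate pairs.

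Next I would set up the quasi-fibration argument. Cover $P_{X'}(0,s)$ by open sets $V_\alpha$ over which $\pi$ is a genuine fibre bundle with fibre $(S^1)^k$: near a base divisor, small perturbations of the points away from the odd circles can be tracked continuously via local coordinates on $X$ pulled back through (\ref{bijection}), while the point on each $S^1_i$ varies freely; this gives a local product structure, hence $\pi$ is a bundle — in particular a quasi-fibration — over each such $V_\alpha$. The subtlety is at divisors where some of the non-circle points of the base collide with each other or (in the limit) approach an odd circle, where the local bundle structure degenerates because points can be created/annihilated in conjugate pairs near $X^\tau$. To handle this I would use the Dold-Thom gluing lemma: if $A, B \subset P_{X'}(0,s)$ are open with $\pi$ a quasi-fibration over $A$, over $B$, and over $A \cap B$, then it is a quasi-fibration over $A \cup B$; combined with a ``distinguished exhaustion'' argument (stratify the base by the coincidence pattern of points and their distance to the branch circles, and build up from the open stratum) this reduces everything to the local models already understood. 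This is the same mechanism by which Dold-Thom prove the infinite symmetric product construction $SP^\infty(-)$ is a quasi-fibration, and by which Propositions \ref{Nofixed} and \ref{Nofixed2} are proved, so I would phrase it to parallel those.

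The main obstacle I expect is precisely the behaviour of $\pi$ near the odd circles: when a point of the base divisor on $X'$ (equivalently, away from the $*_i$) limits onto one of the points $S^0_i$, the corresponding fibre over that limit point still must be $(S^1)^k$, but the identification with nearby fibres is not obviously continuous because a pair of conjugate points on $X$ can be absorbed into, or emitted from, the circle $S^1_i$. One must check that the homotopy type of the preimage of a small distinguished neighbourhood of such a degenerate base point is still $(S^1)^k$ and that the inclusion of the exact fibre is a weak equivalence — this is the only place the argument is not a formal consequence of local triviality. I would do this by an explicit local model: near $S^1_i$, the relevant piece of $P_X(k,s)$ looks like a symmetric product of an annulus (the tubular neighbourhood of $S^1_i$) with its conjugation, fibred over the symmetric product of the quotient Möbius band, and one computes directly — exactly as in the $SP^2(S^1) = $ Möbius band computation used earlier — that the fibre inclusion is a deformation retract up to homotopy. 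Everything else (the global gluing, the identification of the generic fibre, the statement $P_X(k,0)\cong (S^1)^k$) is then bookkeeping.
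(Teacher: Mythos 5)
Your proposal is correct and follows essentially the same route as the paper's proof: stratify the base by how much mass lies on the collapsed circles, identify the fibre over each stratum, and glue with the Dold--Thom/Hatcher quasi-fibration criteria (the paper uses the closed filtration $B_q$ by the number of points on $\coprod_i *_i$ together with a $\tau$-equivariant tubular neighbourhood of $X^{\tau}$). The paper's execution is a bit cleaner at the one point you flag as delicate: over a stratum with $2q_i$ points at $*_i$ the exact fibre is literally $\prod_i SP^{2q_i+1}(S^1)$ (not $(S^1)^k$), and the standard fact that positive symmetric powers of $S^1$ are homotopy equivalent to $S^1$ does the work of your proposed annulus/M\"obius local model, while collisions of points away from the odd circles are invisible to the symmetric product and need not enter the stratification.
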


\begin{proof}
We refer to Hatcher (\cite{H} Lemma 4K.3) for the basic properties of quasi-fibrations. Define $B_q$ to be the subset of $P_{\overline{X}}(0,s)$ consisting of sums for which at least $2q$ points lie on $\coprod_{i=1}^k *_i$. Thus
$$ \emptyset = B_{s+1} \subseteq B_s \subseteq B_{s-1} \subseteq ... \subseteq B_0 =  P_{\overline{X}}(0,s)$$
is a filtration by closed subsets. We will prove that $f^{-1}(B_q) \rightarrow B_q$ is a quasi-fibration by induction on $s-q$.

First observe that $$ B_q \setminus B_{q+ 1} = \coprod_{ q_1+...+q_k = q} B_{(q_1,...,q_k)} $$
where $B_{(q_1,...,q_k)} \subseteq P_{X'}(0,s)$ consists of sums with exactly $2q_i$ points on  $*_i$ for each $i$. There is a natural homeomorphism
$$ \pi^{-1}(B_{(q_1,...,q_k)}) \cong  B_{(q_1,...,q_k)} \times SP^{2q_1+1}(S^1) \times ... \times SP^{2q_k+1}(S^1)$$
so $\pi$ restricts to a trivial fibre bundle over $B_{(q_1,...,q_k)} $. Positive symmetric powers of a $S^1$ are homotopy equivalent to $S^1$, so $\pi$ restricts to a quasi-fibration over $B_q \setminus B_{q+1}$ with homotopy fibre $(S^1)^k$. In particular,  $\pi^{-1}(B_s) \rightarrow B_s$ is a quasi-fibration.

Let $ X^{\tau} \subseteq U  \subseteq X$ be a tubular neighbourhood which $\tau$-equivariantly deformation retracts onto $X^{\tau}$.  Define $U_q \subseteq P_{X'}(0,s)$ those sums containing at least $2q$ points inside of $U$. Then $U_q$ deformation retracts onto $B_q$ lifting to a deformation retraction of $\pi^{-1}(U_q)$ onto $\pi^{-1}(B_q)$ inducing homotopy equivalences on fibres.  

Suppose inductively that $\pi$ is a quasi-fibration over $B_q$. Then the deformation retraction above implies that $\pi$ is a quasi-fibration over $U_q$ by (\cite{H} Lemma 4K.3 (c)).  Since $\pi$ also quasifibration over $B_{q-1} \setminus B_q$, $\pi$ must be a quasi-isomorphism over $B_{q-1} = (U_q \cap B_{q-1}) \cup (B_{q-1} \setminus B_q)$ by (\cite{H} Lemma 4K.3 (a)), completing the induction.
\end{proof}

\begin{prop}\label{SScoll}
Given any divisor $D_0 \in P_X(0,s)$, the map 
\begin{equation}\label{fibinc}
 i: P_X(k,0) \rightarrow P_X(k,s), ~~~~~~~D \mapsto D+D_0
 \end{equation}
induces a surjection in $\Z_2$-cohomology.  In particular, the Serre spectral sequence of (\ref{quasifib0}) collapses and we obtain an isomorphism  
\begin{equation}\label{fibtensorcoll} 
 H^*(P_X(k,s);\Z_2) \cong H^*((S^1)^k;\Z_2) \otimes H^*(P_{X'}(0,s);\Z_2 ).
 \end{equation}
of graded $H^*( P_{X'}(0,s);\Z_2)$-modules.

Set $\bar{g} = g-k$. If the odd circles are not separating then the Poincar\'e series equals 
$$ P_t(P_X(k,s)) =  (1+t)^k\sum_{i=0}^{min(s, \bar{g})} { \bar{g} \choose i }  \frac{ t^{2s-i+1}-t^i}{t-1}$$
If the odd circles are separating then 
$$ P_t(P_X(k,s)) =  (1+t)^k\sum_{i=0}^{min(s, \bar{g}+1)} { \bar{g}+1 \choose i } \frac{ t^{2s-i+2}-t^i}{t^2-1}$$ 
In either case the Euler characteristic satisfies
$$ \chi( P_X(k,s)) = \begin{cases}  0 & \text{if $k > 0$} \\ (-1)^s{ \bar{g}-1 \choose s} & \text{if $k=0$} \end{cases} $$
\end{prop}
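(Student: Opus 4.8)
The plan is to first establish the cohomology surjectivity in \eqref{fibinc}, then deduce the collapse of the Serre spectral sequence and the module isomorphism \eqref{fibtensorcoll}, and finally extract the Poincar\'e series and Euler characteristic from \eqref{fibtensorcoll} together with the computations in \S\ref{Symmetric products of a real curve}. For the surjectivity of $i^*$: by Corollary \ref{g+1Gens}, $H^*(P_X(k,s))$ is generated in degree one, so it suffices to show that $i^*$ hits a spanning set of $H^1(P_X(k,s))$. The fibre $P_X(k,0)\cong (S^1)^k$ contributes $k$ degree-one classes, one for each odd real circle $S^1_j$, detected geometrically by the map ``evaluate the number of points on $S^1_j$ mod the structure of $SP^\bullet(S^1)$''; concretely one uses that $SP^{\text{odd}}(S^1)\simeq S^1$ and that the inclusion $i$ adds a fixed divisor $D_0\in P_X(0,s)$ supported away from the odd circles, so these $k$ classes pull back nontrivially. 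Counting: by Corollary \ref{g+1Gens} the target has $\le g+1$ degree-one generators; the base $P_{X'}(0,s)\cong SP^s(N_{g-k})$ (resp.\ $SP^s(\Sigma_{(g-k+1)/2}\amalg\Sigma_{(g-k+1)/2})$) contributes, via $\pi^*$, exactly $g-k+1$ (resp.\ the appropriate number of) degree-one classes by Theorem \ref{nonerientablesymm}, and together with the $k$ fibre classes these span, forcing $i^*$ onto a spanning set. This is the step I expect to be the main obstacle: one must pin down precisely which degree-one classes on $P_X(k,s)$ come from the fibre versus the base, and verify the pairing is nondegenerate so that the count ``$k$ fibre generators $+$ base generators $=$ total generators'' is tight rather than merely an inequality; the Euler-characteristic bookkeeping for the separating versus non-separating cases (which changes $g-k$ to $g-k$ vs.\ $g-k+1$ via Proposition \ref{Nofixed2}) needs care here.

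Granting the surjectivity of $i^*$, the collapse of the Serre spectral sequence of the quasi-fibration \eqref{quasifib0} is the standard Leray--Hirsch argument: surjectivity of restriction to the fibre kills all differentials, $E_2^{*,*}=E_\infty^{*,*}=H^*((S^1)^k)\otimes H^*(P_{X'}(0,s))$, and the extension is a free $H^*(P_{X'}(0,s))$-module since the fibre classes lift to $P_X(k,s)$; this gives the module isomorphism \eqref{fibtensorcoll}. For the Poincar\'e series one simply multiplies $P_t((S^1)^k)=(1+t)^k$ by $P_t(P_{X'}(0,s))$, and $P_{X'}(0,s)\cong SP^s(N_{g-k})$ in the non-separating case (Proposition \ref{symprodeen}, with $X'$ of genus $g-k$ so $N_{g-k}$ has the role of ``$N_{\bar g}$''; here I note that in this proposition the relevant non-orientable surface is the quotient of a genus-$(g-k)$ curve, so its $\Z_2$-Betti numbers are those of Theorem \ref{nonerientablesymm} with $g\rightsquigarrow\bar g$), giving the first displayed formula; in the separating case $X'$ is a disjoint union of two genus-$(\bar g+1)/2$ curves, and $SP^s$ of that is $SP^s(\Sigma_{(\bar g+1)/2})$ on each factor summing appropriately — but more simply, the fixed-point set $P_{X'}(0,s)$ is by Proposition \ref{Nofixed2}-type reasoning built from $SP^{s}$ of a surface whose $\Z_2$-cohomology Poincar\'e series is that of Theorem \ref{McDThm} with $2g\rightsquigarrow \bar g +1$, yielding the second formula.

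Finally the Euler characteristic: if $k>0$ then $(1+t)^k$ has a factor $(1+t)$ which vanishes at $t=-1$, so $\chi(P_X(k,s))=0$; if $k=0$ then $P_X(0,s)\cong SP^s(N_g)$ by Proposition \ref{symprodeen} and $\chi(SP^s(N_{\bar g}))=(-1)^s\binom{\bar g-1}{s}$ by Theorem \ref{nonerientablesymm} (with $\bar g = g$ when $k=0$), completing the proof. I would present the argument in this order — surjectivity, collapse, module structure, then the three numerical corollaries — since each step feeds directly into the next and isolates the one genuinely delicate point (the generator count) at the very beginning.
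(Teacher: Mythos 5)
Your overall architecture (surjectivity of $i^*$ $\Rightarrow$ Leray--Hirsch collapse $\Rightarrow$ Poincar\'e series and Euler characteristic) matches the paper, and the last two stages are fine. But the one step you yourself flag as ``the main obstacle'' --- surjectivity of $i^*\colon H^*(P_X(k,s))\to H^*(P_X(k,0))$ --- is not actually proved in your proposal, and the two devices you offer for it do not close the gap. First, the ``geometric detection'' classes: there is no obvious continuous map $P_X(k,s)\to SP^{2a+1}(S^1_j)$ recording the points on an odd circle, because the number of points on $S^1_j$ is not locally constant (conjugate pairs can migrate onto and off the circle), so you would need a stabilized construction that you have not supplied; producing degree-one classes on the \emph{total space} that restrict to a basis of $H^1$ of the fibre is exactly the content of the claim, not an input to it. Second, the counting argument: Corollary \ref{g+1Gens} gives only an upper bound of $g+1$ degree-one generators for $H^*(P_X(k,s))$ (the surjection need not be injective in degree one), and even granting that $\pi^*$ contributes $g-k+1$ independent base classes, the arithmetic $k+(g-k+1)=g+1$ does not force the remaining generators to restrict to a \emph{basis} of $H^1((S^1)^k)$ rather than to a degenerate set. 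So the count yields at best an inequality, not the surjectivity you need.

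The paper closes this gap with a different idea that your proposal is missing: since $H^*(P_X(k,0))$ is an exterior algebra on degree-one classes and $\im(i^*)$ is the subring generated by the $i^*(\alpha_j)$, surjectivity is equivalent to the single condition $H^k(P_X(k,0))\subseteq\im(i^*)$. One then extends $i$ to the forgetful map $f\colon P_X(k,0)\times P_X(0,s)\to P_X(k,s)$, observes that $f$ has degree one between closed manifolds (so the top K\"unneth component $H^k(P_X(k,0))\otimes H^{2s}(P_X(0,s))$ lies in $\im(f^*)$), and notes that each generator decomposes as $f^*(\alpha_j)=i^*(\alpha_j)\otimes 1+1\otimes\gamma_j$, whence $\im(f^*)\subseteq\im(i^*)\otimes H^*(P_X(0,s))$. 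Comparing these two facts forces $H^k(P_X(k,0))\subseteq\im(i^*)$. You should incorporate this (or an equivalent Poincar\'e-duality argument) before the Leray--Hirsch step; as written, the proposal assumes the hardest part.
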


\begin{proof} 
By Corollary \ref{g+1Gens}, $H^*( P_X(k,s))$ is generated in $(g+1)$ elements in degree one, $\alpha_1,...,\alpha_{g+1} \in H^1(P_X(k,s))$. Therefore, $\im(i^*)$ is generated by $i^*(\alpha_1),...,i^*(\alpha_{g+1}) \in H^1(P_X(k,0))$. Because $H^*(P_X(k,0))$ is an exterior algebra generated by $k$ elements in degree one, it follows that $i^*$ is surjective if and only if $H^k(P_K(k,0)) \subseteq \im(i^*)$.  

The inclusion $i$ extends to the forgetful map
$$ f: P_X(k,0) \times P_X(0,s)  \rightarrow  P_X(k, s) .$$
This is a degree one map of closed manifolds sending one fundamental class to the other. In terms of the Kunneth decomposition, this means that 
$$H^{k}(P_X(k,0)) \times  H^{2s}(P_X(0,s))  \subseteq  \im(f^*).$$

On the other hand, the image of $f^*$ is generated by the image of the generators $\alpha_1,...,\alpha_{g+1}$ which have the form
$$  f^*(\alpha_i) = i^*(\alpha_i) \otimes 1 + 1 \otimes \gamma_i $$
for some $\gamma_i \in H^*(P_X(0,s))$. Therefore, $\im(f^*) \subseteq \im(i^*) \otimes H^*( P_X(0,s))$. It follows that $\im(i^*)$ must contain $H^k(P_X(k,0))$ and thus $i^*$ is surjective.

Applying the Leray-Hirsch Theorem to the quasi-fibration of Proposition \ref{quasifib} yields the isomorphism (\ref{fibtensorcoll}). Input the formulas from Theorem \ref{McDThm} and Theorem \ref{nonerientablesymm} to compute the Poincar\'e series and Euler characteristic.
\end{proof}

\begin{remark}\label{n=o2}
If $\bar{g} = g-k \geq 0$, then $P_t(P_X(k,s))$ is independent of whether or not the odd circles are separating, due to Remark \ref{n=o}.
\end{remark}

\begin{thm}\label{Realsymmprodform}
Let $(X,\tau)$ be a real curve of genus $g$ with $n$ real circles, and let $P_X(k,s) \subseteq SP^{k+2s}(X)^{\tau}$ be a path component with $k$ odd circles. If the odd circles are not separating then
\begin{equation}\label{nonsepcoh}
 H^*(P_X(k,s)) \cong \wedge(x_1,...,x_{k}) \otimes H^*(SP^s(N_{g-k})).
 \end{equation} 
If the odd circles are separating, then 
\begin{equation}\label{sepcoh}
 H^*(P_X(k,s) ) \cong \wedge(x_1,...,x_{k-1}) \otimes H^*(SP^s(\Sigma_{(g-k+1)/2}))[\sqrt{\eta}] 
 \end{equation}
where we adjoin a square root of $\eta \in H^*(SP^s(\Sigma_{(g-k+1)/2}))$ defined in the presentation (\ref{McDrelations}).\end{thm}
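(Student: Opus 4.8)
The strategy is to reduce the general path component $P_X(k,s)$ to the already-understood case $P_{X'}(0,s)$ by combining the Leray--Hirsch isomorphism~(\ref{fibtensorcoll}) with a careful analysis of the ring structure, using Proposition~\ref{symprodeen} (which identifies $H^*(P_{X'}(0,s))$ with $H^*(SP^s(N_{g-k}))$ in the non-separating case and with $H^*(SP^s(\Sigma_{(g-k+1)/2}))$ in the separating case, via Proposition~\ref{Nofixed2}). The isomorphism~(\ref{fibtensorcoll}) already tells us that $H^*(P_X(k,s))$ is \emph{additively} $H^*((S^1)^k) \otimes H^*(P_{X'}(0,s))$, so the whole content of the theorem is multiplicative: we must pin down the ring structure. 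First I would invoke Corollary~\ref{g+1Gens} to fix a generating set $h_1,\dots,h_g,w$ in degree one, and choose these generators compatibly with the map $\pi$ of~(\ref{quasifib0}), so that $x_1,\dots,x_k$ denote classes restricting to a basis of $H^1((S^1)^k)$ on the fibre and the remaining classes are pulled back from $P_{X'}(0,s)$.

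\textbf{Non-separating case.} Here the base $P_{X'}(0,s)$ has cohomology $H^*(SP^s(N_{g-k}))$, generated by $g-k+1$ degree-one classes. Pulling these back along $\pi$ and adjoining the $k$ fibre classes $x_1,\dots,x_k$, the Leray--Hirsch isomorphism~(\ref{fibtensorcoll}) shows the total space has cohomology additively $\wedge(x_1,\dots,x_k)\otimes H^*(SP^s(N_{g-k}))$. To upgrade this to a ring isomorphism I would argue that the $x_i$ square to zero --- this follows because $x_i$ restricts to a degree-one class on the circle fibre, and more precisely because $x_i$ can be taken to be the pullback of a class from the auxiliary surface $P_X(k,0)\cong(S^1)^k$ under the forgetful splitting $f$ used in the proof of Proposition~\ref{SScoll}, where it visibly squares to zero --- and that the subalgebra generated by the pulled-back base classes is exactly $\pi^*H^*(P_{X'}(0,s))$, with no cross-relations beyond those forced on each tensor factor. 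The cross-terms $x_i \cup \pi^*(\beta)$ impose nothing new because the Leray--Hirsch decomposition is as a module over $\pi^*H^*(\text{base})$. Putting these together gives~(\ref{nonsepcoh}).

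\textbf{Separating case.} This is the subtler one. Now $X'$ is a disjoint union of two copies of $\Sigma_{(g-k+1)/2}$, and $P_{X'}(0,s)\cong SP^s(\Sigma_{(g-k+1)/2})$ by Proposition~\ref{Nofixed2}. The novelty is that among the $g+1$ degree-one generators of $H^*(P_X(k,s))$ supplied by Corollary~\ref{g+1Gens}, only $k-1$ (not $k$) come from the fibre direction in a way that squares to zero, while one extra degree-one class --- call it $\sqrt{\eta}$ --- squares to the class $\eta \in H^2(SP^s(\Sigma_{(g-k+1)/2}))$ appearing in Macdonald's presentation~(\ref{McDrelations}). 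To locate this class and verify $(\sqrt{\eta})^2 = \eta$, I would use the quotient/collapse maps of Lemma~\ref{c to b}: the separating configuration means that collapsing the odd circles produces, among the $*_i$, one whose removal changes the orientability bookkeeping so that a $\mathbb{R}P^2$-type collapse $q_m$ (as in the lemma before Lemma~\ref{c to b}) detects a degree-one class whose square is the image of $w^2$, which by Lemma~\ref{c to b} is identified with $\eta$. Equivalently, one of the $k$ boundary circles glued in contributes $SP^2(S^1)$, a M\"obius band, whose core class squares nontrivially; tracking this through the forgetful map identifies its square with $\eta$ pulled back from the base. The dimension count from Proposition~\ref{SScoll} (via Remark~\ref{n=o2}) confirms that $\wedge(x_1,\dots,x_{k-1})\otimes H^*(SP^s(\Sigma_{(g-k+1)/2}))[\sqrt{\eta}]$ has the right Poincar\'e series, and Corollary~\ref{g+1Gens} guarantees there are no further generators, so surjectivity plus the Poincar\'e-series match forces the ring isomorphism~(\ref{sepcoh}).

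\textbf{Main obstacle.} The hard part is the separating case: specifically, showing that the extra degree-one class really has square exactly $\eta$ rather than some other element of $H^2$, and that this is the \emph{only} modification to the non-separating picture (i.e.\ that the other $k-1$ fibre classes still square to zero and that no relations beyond $(\sqrt{\eta})^2=\eta$ are introduced). I expect this to require a careful geometric identification of the core circle of the glued-in M\"obius band $SP^2(S^1)$ with the generator $b$ of Theorem~\ref{nonerientablesymm} via the commutative diagram of collapse maps, together with a Poincar\'e-series sanity check from Proposition~\ref{SScoll} to rule out any residual ambiguity in the ring structure.
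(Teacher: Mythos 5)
Your overall architecture matches the paper's: Leray--Hirsch over the quasi-fibration (\ref{quasifib0}), Corollary \ref{g+1Gens} to control the degree-one generators, and Lemma \ref{c to b} together with the forgetful map to locate $\sqrt{\eta}$ in the separating case. However, there is a genuine gap in your justification that the fibre classes $x_1,\dots,x_k$ square to zero in the non-separating case. Neither of your two reasons is valid: (a) the fact that $x_i$ restricts to a degree-one class on the circle fibre says nothing about $x_i^2$ in the total space, since the square could be a class that dies under restriction to the fibre; and (b) $x_i$ cannot be ``taken to be the pullback of a class from $P_X(k,0)$'' --- the forgetful map goes $f: P_X(k,0)\times P_X(0,s)\to P_X(k,s)$, i.e.\ the wrong way, and there is no projection of $P_X(k,s)$ onto its quasi-fibre. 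Under the K\"unneth decomposition one only has $f^*(x_i)=y_i\otimes 1+1\otimes z_i$, and it is precisely the component $z_i$ that can force $x_i^2\neq 0$. Indeed, if your reasoning were valid it would apply verbatim in the separating case and ``prove'' that all $k$ fibre classes square to zero, contradicting the statement you are proving. The correct argument comes from Corollary \ref{g+1Gens}: since $H^*(P_X(k,s))$ is a quotient of $\wedge(h_1,\dots,h_g)\otimes S(w)$, at most one degree-one generator has nonzero square, so one may arrange $x_1^2=\dots=x_{k-1}^2=0$; and in the non-separating case the last class $x_k$, if it has nonzero square, can be replaced by $x_k+\pi^*(z)$ for some $z\in H^1(P_{X'}(0,s))$ with $z^2\neq 0$ (such $z$ exists exactly because $P_{X'}(0,s)\cong SP^s(N_{g-k})$), using that $H^2(P_X(k,s))$ contains a unique nonzero square. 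This correction, and the existence of $z$, is the actual point where the two cases diverge, and it is absent from your write-up.

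In the separating case your sketch is in the right spirit but leaves two things unproved. First, you never argue that the extra class has nonzero square at all; if it did not, the answer would be $\wedge(x_1,\dots,x_k)\otimes H^*(SP^s(\Sigma_{(g-k+1)/2}))$, which has the \emph{same} Poincar\'e series as (\ref{sepcoh}), so your concluding Poincar\'e-series check cannot rule this out --- the ambiguity is purely multiplicative. Second, the identification $x_k^2=\pi^*(\eta)$ needs a precise mechanism: the paper uses that $f$ has degree one, hence $f^*$ is injective; writing $f^*(x_k)=y\otimes 1+1\otimes z$ gives $f^*(x_k^2)=1\otimes z^2\in 1\otimes H^2(P_X(0,s))$; $H^2(P_X(0,s))$ contains a unique nonzero square by Corollary \ref{g+1Gens}; and the blow-down $bd_s$ of Lemma \ref{c to b} sends $\eta$ to that unique square. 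Your appeal to an ``$\R P^2$-type collapse'' gestures at Lemma \ref{c to b} but does not supply the injectivity of $f^*$ that makes the computation land in $H^2(P_X(0,s))$, where uniqueness of the nonzero square can be invoked.
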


\begin{proof}
If $s=0$, then $P_X(k,s)\cong (S^1)^k$ and the result holds. So assume $s\geq 1$. By Proposition \ref{SScoll}, we have quasi-fibration sequence $P_X(k,0) \stackrel{i}{\rightarrow} P_X(k,s) \stackrel{\pi}{\rightarrow} P_{X'}(0,s)$ inducing a cohomology injection $\pi^*$ and surjection $i^*$ of $H^*(P_X(k,s))$ onto $ H^*(P_X(k,0)) = H^*((S^1)^k)$ which is an exterior algebra on $k$ generators of degree one. Choose $x_1,...,x_k \in H^*(P_X(k,s))$ so that $i^*(x_1),...,i^*(x_k)$ forms a basis for $H^1(P_X(k,0))$. Then by the Leray-Hirsch Theorem, we have a surjective ring homomorphism
\begin{equation}\label{stepcoll}
 S(x_1,...,x_k) \otimes  H^*(P_{X'}(0,s)) \twoheadrightarrow H^*(P_X(k,s)). 
 \end{equation}
Recall from Corollary \ref{g+1Gens} that $H^*(P_X(k,s))$ is generated as a ring by elements of degree one, all but one of which squares to zero. Therefore, we can assume that $x_i^2 = 0$ for $i=1,...,k-1$ and the surjection (\ref{stepcoll}) factors through a surjection
$$ \wedge(x_1,...,x_{k-1}) \otimes S(x_k) \otimes  H^*(P_{X'}(0,s)) \twoheadrightarrow H^*(P_X(k,s)).  $$

If $X^{\tau}$ is non-separating, then there exists $ z \in H^1(P_{X'}(0,s)) = H^1(SP^{s}(N_{g-k}))$ such that $z^2 \neq 0$. Thus either $x_k^2=0$ or we can replace it with $x_k+\pi^*(z)$ which satisfies $(x_k+\pi^*(z))^2 = x_k^2+\pi^*(z)^2 = 0$ proving (\ref{nonsepcoh}).

If $X^{\tau}$ is separating, then every element of $H^1(P_{X'}(0,s)) = H^1(SP^s(\Sigma_{(g-k+1)/2}))$ squares to zero. It follows that $x_k^2 \neq 0$. Recall that the forgetful map $f: P_X(k,0) \times P_X(0,s) \rightarrow P_X(k,s)$ has degree one, so it induces a cohomology injection. Decompose $f^*(x_k) = y \otimes 1 +1 \otimes z$ according to Kunneth formula. Then $0 \neq f^*(x_k^2) = y^2 \otimes 1 + 1 \otimes z^2 = 1 \otimes z^2 \in 1 \otimes H^2(P_X(0,s))$. By Corollary \ref{g+1Gens}, $H^2(P_X(0,s))$ contains a unique non-zero square. Thus to prove (\ref{sepcoh}), it suffices to show that the map $P_X(0,s) \rightarrow P_{X'}(0,s)$ sends $\eta$ to this unique non-zero square.  By Proposition \ref{symprodeen} we have a commuting diagram
$$ \xymatrix{    H^*( SP^s( P_X(0,1)) ) && \ar[ll]_{\cong} H^*( P_X(0,s)) \\ 
H^*( SP^s(P_{X'}(0,1))) \ar[u]^{bd_s^*} && \ar[ll]_{\cong} H^*( P_{X'}(0,s)) \ar[u]} $$
where $bd_s$ is the symmetric power of the blow-down map considered in Lemma \ref{c to b}, completing the proof.

\end{proof}

\section{Real Abel-Jacobi and covering spaces}\label{Real Abel-Jacobi and covering spaces}

Recall from \S \ref{Topology of real curves} that if $\Sigma^{\tau}$ is a disjoint union of $n \geq 1$ real circles, then for all $l \in \Z$, $Pic_l(X)^{\tau}$ is a torsor for $Pic_0(X)^{\tau} \cong (\Z/2)^{n-1} \times (S^1)^g$. Consider the homomorphism $sq: Pic_0(\Sigma)^{\tau} \rightarrow Pic_0(\Sigma)^{\tau}$ sending the divisor class $[D]$ to $sq([D]) =[-2D]$. Then $sq$ maps to the identity component forms a $(\Z/2)^{g+n-1}$-principal covering over it. More generally, given a fixed divisor $D_0$ of degree $d$, the map $$sq: Pic_l(X)^{\tau} \rightarrow Pic_{d -2l}(X)^{\tau}$$ sending $[D]$ to $[D_0-2D]$ is a $(\Z/2)^{g+n-1}$-principal cover over the path component of $Pic_{d-2l}(X)^{\tau}$ corresponding to $w_1(D_0)$. 

Applying this to the pull-back diagram (\ref{pullbackreal}), we identify the real part of the $U(1)$-fixed point component $F^{\tau}_l$ with a pull-back of the form
\begin{equation}
 \xymatrix{  F_l^\tau \ar[r] \ar[d] &  (\Z/2)^{n-1} \times (S^1)^g \ar[d]^{sq} \\
P_X(k,s)  \ar[r]^{aj} & (S^1)^g } 
\end{equation}
where $k$ is the number of odd circles of $D$, $k+2s=m = \deg(K(D))-2l$, $(S^1)^g$ is identified with the path component of $Pic_m(X)^{\tau}$ containing $[K(D)]$, and $sq$ is the group homomorphism sending each element to its square. Clearly then, $F_l^{\tau}$ is a coproduct of $2^{n-1}$ copies of the pullback
\begin{equation}\label{simplified}
 \xymatrix{  P \ar[r] \ar[d] & (S^1)^g \ar[d]^{sq} \\
P_X(k,s)  \ar[r]^{aj} & (S^1)^g  .} 
\end{equation}
We are reduced to computing the mod 2 Betti numbers of $P$.

In general, suppose $\pi: \tilde{M} \rightarrow M$ is $(\Z/2)^q$-principal bundle determined by the $q$-tuple $(w_1,...,w_q)$ of Stiefel-Whitney classes $w_i \in H^1(M;\Z_2)$ and $M$ is a finite type CW-complex. The classifying map
$$ \xymatrix{  \tilde{M} \ar[r]  \ar[d]& (S^{\infty})^q \ar[d]\\
M \ar[r] & (\R P^{\infty})^{q}. } $$
gives rise to an Eilenberg-Moore Spectral sequence, $EM^{*,*}_*$ converging  to $H^*(\tilde{M};\Z_2)$.\footnote{The spectral sequence converges to $H^*(\tilde{M})$ because $\pi_1((\R P^{\infty})^q) =(\Z/2)^q $ is a finite $2$-group and the homotopy fibre $\tilde{M}$ has finite dimensional cohomology in each degree. See \cite{BD} citing \cite{Dw}.} The second page $EM_2^{*,*} $ equals the cohomology of a bigraded differential graded algebra

\begin{equation}\label{EM2formula}
 EM_2^{*,*}= H(\wedge(u_1,...,u_k) \otimes H^*(M;\Z_2) , \delta)
 \end{equation}
where $\deg(u_i) = (-1,1)$, $\deg(H^k(M;\Z_2)) = (0,k)$, $\delta(u_i) = w_i$ and $\delta( H^*(M;\Z_2)) = 0$.

In case $q=1$, $\tilde{M} \rightarrow M$ is a 2-fold cover and (\ref{EM2formula}) is equivalent to the associated Thom-Gysin sequence. In particular, because (\ref{EM2formula}) has only two non-zero columns when $q=1$, the spectral sequence collapses to give $P_t(EM_2^{*,*}) = P_t(\tilde{M})$. 

Introduce the relation $P_t(X) \geq P_t(Y)$ if the difference $P_t(X) -P_t(Y)$ has only non-negative coefficients.

\begin{lem}\label{coverge}
Let $\pi: \tilde{M} \rightarrow M$ be a $(\Z/2)^q$-principal bundle classified by Stiefel-Whitney classes $(w_1,...,w_q)   \in H^1(M)^{\times q}$.  If $w_i^2= 0$ for all $w_i$, then $P_t(\tilde{M}) \geq P_t(M)$. 
\end{lem}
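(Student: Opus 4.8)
The plan is to reduce the statement to the case $q=1$, where it follows from the fact (already recorded just before the lemma) that the spectral sequence (\ref{EM2formula}) has only two columns and hence degenerates, and then to telescope along a tower of double covers. First I would factor $\pi$ as a composite
$$ \tilde M = M_q \to M_{q-1} \to \cdots \to M_1 \to M_0 = M, $$
in which $M_i \to M_{i-1}$ is the double cover classified by $\bar w_i \in H^1(M_{i-1};\Z_2)$, the pull-back of $w_i$ along the covering $M_{i-1} \to M$. Using the fibre-product description of the associated principal bundle together with the product splitting $B(\Z/2)^q \simeq (\R P^\infty)^q$, one checks that the composite $M_q \to M$ is the originally given $(\Z/2)^q$-bundle, and that each $M_i$ is again a finite-type CW-complex, so the convergence hypothesis behind (\ref{EM2formula}) is preserved at every stage. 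Crucially, because $\bar w_i$ is pulled back from $w_i$, one has $\bar w_i^{\,2}=0$ in $H^*(M_{i-1};\Z_2)$ as well.

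Next I would settle the case $q=1$: if $\rho\colon \tilde N \to N$ is a double cover classified by $w\in H^1(N;\Z_2)$ with $w^2=0$, then $P_t(\tilde N)\ge P_t(N)$. Here (\ref{EM2formula}) has only the columns $s=0$ and $s=-1$, so it degenerates at $E_2$, and reading off the two columns gives
$$ \dim H^n(\tilde N;\Z_2) = \dim\bigl(H^n(N)/wH^{n-1}(N)\bigr) + \dim\ker\bigl(w\colon H^n(N)\to H^{n+1}(N)\bigr). $$
The first summand falls short of $\dim H^n(N)$ by exactly $\dim\bigl(wH^{n-1}(N)\bigr)$; but the hypothesis $w^2=0$ forces $wH^{n-1}(N)\subseteq\ker\bigl(w\colon H^n(N)\to H^{n+1}(N)\bigr)$, so the second summand has dimension at least $\dim\bigl(wH^{n-1}(N)\bigr)$ and makes up the deficit. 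Hence $\dim H^n(\tilde N;\Z_2)\ge\dim H^n(N;\Z_2)$ for every $n$, which is the desired inequality of Poincar\'e series.

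Finally I would apply the $q=1$ result to each rung of the tower to obtain $P_t(M_i)\ge P_t(M_{i-1})$, and since the relation ``$\ge$'' on Poincar\'e series is transitive, chaining these inequalities yields $P_t(\tilde M)=P_t(M_q)\ge P_t(M_0)=P_t(M)$.

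The step I expect to require the most care is structural rather than cohomological: verifying that the tower of double covers genuinely reconstitutes the given $(\Z/2)^q$-bundle, and that at the $i$-th stage the classifying class really is the pull-back of $w_i$ (so that it still squares to zero). This is precisely where one uses that $B(\Z/2)^q$ splits as a product and that $w_1,\dots,w_{i-1}$ have been trivialized on $M_{i-1}$. Once that bookkeeping is in place, the only genuine input is the collapse of the two-column spectral sequence, followed by a one-line rank count.
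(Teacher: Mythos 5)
Your proposal is correct and follows essentially the same route as the paper: factor the $(\Z/2)^q$-cover into a tower of double covers whose classifying classes are pull-backs of the $w_i$ (hence still square to zero), and handle each double cover by the two-column degeneration of the Eilenberg--Moore spectral sequence, which the paper notes is equivalent to the Thom--Gysin sequence it uses for the same rank count. No gaps.
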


\begin{proof}
If $q =1$ then we have the Thom-Gysin sequence:
$$... \rightarrow H^{k-1}(M) \stackrel{\phi_{k-1} }{\rightarrow}  H^k(M) \stackrel{\pi^*}{\rightarrow} H^k(\tilde{M} ) \rightarrow H^k(M) \stackrel{\phi_k}{\rightarrow} H^{k+1}(M) \rightarrow ...$$
where $\phi_k$ is cup product by $w_1$.  Because $w^2_1=0$ we have $\im(\phi_{k-1}) \subseteq \ker(\phi_k)$. The result follows by exactness.

For $q > 1$, we construct a sequence of $2$-fold covers  $\tilde{M} = M_q \rightarrow M_{q-1} \rightarrow ... \rightarrow M_0 \rightarrow M$ by taking partial quotients by subgroups of $(\Z/2)^q$. The Stiefel-Whitney class for the covering $M_{i+1} \rightarrow M_i$ is the pull-back of $w_i \in H^*(M;\Z_2)$ so it must square to zero. By induction $P_t(\ti{M}) \geq P_t(M)$.
 \end{proof}

\begin{lem}\label{tensorext}
Let $\tilde{M} \rightarrow M$ be as in Lemma \ref{coverge}. Suppose that there is a ring factorization $$ H^*(M;\Z_2) \cong \wedge(w_1,...,w_q) \otimes R $$ for some graded subring $R \leq H^*(M;\Z_2)$.  Then $P_t(\tilde{M}) = P_t(M)$. \end{lem}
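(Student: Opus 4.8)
The plan is to run the Eilenberg--Moore spectral sequence (\ref{EM2formula}) and show it collapses for a trivial reason: the $E_2$-page is already isomorphic, as a bigraded vector space, to $H^*(\tilde M;\Z_2)$ placed in appropriate bidegrees, and moreover the convergence is clean enough that $\dim_{\Z_2} H^*(\tilde M) = \dim_{\Z_2} E_2^{*,*}$. Given the factorization $H^*(M;\Z_2)\cong \wedge(w_1,\dots,w_q)\otimes R$, the $E_1$-term of (\ref{EM2formula}) is the Koszul-type complex $\wedge(u_1,\dots,u_q)\otimes \wedge(w_1,\dots,w_q)\otimes R$ with differential $\delta(u_i)=w_i$, $\delta$ trivial on the other factors. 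This is exactly the tensor product over $i$ of the two-term complexes $(\wedge(u_i)\otimes\wedge(w_i),\ \delta u_i = w_i)$, tensored with the trivial complex $R$.

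First I would compute the cohomology of each elementary factor. The complex $\wedge(u_i)\otimes\wedge(w_i)$ has underlying vector space spanned by $1,\ w_i,\ u_i,\ u_iw_i$ with $\delta(u_i)=w_i$ and $\delta(u_iw_i)=w_i^2=0$ (here I use $w_i^2=0$, which holds automatically since $w_i$ is a $1$-dimensional exterior generator). Hence the cohomology of this factor is spanned by the classes of $1$ and $u_iw_i$ — a rank-two space. By the Künneth theorem for complexes of $\Z_2$-vector spaces, the $E_2$-page is therefore $\bigl(\bigotimes_{i=1}^q (\Z_2\oplus\Z_2)\bigr)\otimes R$, which has total dimension $2^q\dim_{\Z_2} R = \dim_{\Z_2} H^*(M;\Z_2)$. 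Equivalently $P_t(E_2^{*,*}) = (1+t)^q\,P_t(R) = P_t(M)$ once one accounts for the bidegrees: each $u_iw_i$ contributes to total degree $1$, matching the degree of the corresponding cover-direction class.

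Next I would observe that, by Lemma \ref{coverge}, $P_t(\tilde M)\geq P_t(M)$, so $\dim_{\Z_2} H^*(\tilde M)\geq \dim_{\Z_2} H^*(M) = \dim_{\Z_2} E_2^{*,*}$. On the other hand, the spectral sequence converges to $H^*(\tilde M;\Z_2)$ (as noted in the excerpt, since $\pi_1$ of the base of the classifying map is a finite $2$-group and $\tilde M$ has degreewise finite cohomology), so the abutment has dimension at most $\dim_{\Z_2} E_2^{*,*}$. Combining the two inequalities forces equality $\dim_{\Z_2} H^*(\tilde M) = \dim_{\Z_2} E_2^{*,*} = \dim_{\Z_2} H^*(M)$, and since everything is finite-dimensional in each degree this upgrades to the graded identity $P_t(\tilde M) = P_t(M)$: indeed the spectral sequence must collapse at $E_2$ with no extension ambiguity at the level of Poincaré series, because any later differential would strictly drop total dimension below that of the abutment, contradicting the lower bound from Lemma \ref{coverge}.

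The main obstacle is bookkeeping the bidegrees carefully enough to conclude the \emph{graded} equality $P_t(\tilde M)=P_t(M)$ rather than merely equality of total dimensions: one must check that the surviving class $u_iw_i$ sits in the total degree that a cover-direction generator occupies in $H^*(\tilde M)$, and that $\delta$ really kills nothing else once the factorization is used (this is where the hypothesis that the $w_i$ are honest exterior generators, not just square-zero elements, is essential — it guarantees $R$ is a genuine complementary subring and the Koszul factor is exactly the elementary one computed above). A secondary point to handle is that the $w_i$ in the factorization are the \emph{same} classes defining the bundle; if one only had square-zero classes that were not a regular system of generators, the elementary-factor computation would fail and one would at best recover Lemma \ref{coverge}'s inequality. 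Once these points are pinned down the sandwich argument closes immediately.
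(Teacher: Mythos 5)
Your proposal is correct and follows essentially the same route as the paper: identify $EM_2^{*,*}$ with $\wedge(y_1,\dots,y_q)\otimes R$ where $y_i=[u_iw_i]$ (you simply spell out the Koszul/K\"unneth computation that the paper leaves as a check), then sandwich $P_t(\tilde M)$ between the lower bound from Lemma \ref{coverge} and the upper bound $P_t(EM_\infty^{*,*})\le P_t(EM_2^{*,*})=P_t(M)$ coming from convergence.
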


\begin{proof}
By Lemma \ref{coverge} we have $P_t(\tilde{M}) \geq P_t(M)$. For the converse inequality, check that $ EM_2^{*,*} \cong \wedge(y_1,...,y_q) \otimes R$ where $y_i$ is the class represented by the cocycle $u_i w_i$. Therefore  $P_t(M) = P_t( E_2^{*,*}) \geq P_t(EM_{\infty}^{*,*}) = P_t( \tilde{M})$.
 
\end{proof}

\begin{prop}\label{Coveringspacebetti}
Consider the covering space $P \rightarrow P_X(k,s)$ defined by the pull-back diagram (\ref{simplified}). Introduce $\bar{g} = g-k$. Then $P$ has Poincar\'e series divisible by $(1+t)^k$. If the odd circles are not separating then
\begin{eqnarray*} P_t(P)/(1+t)^k  &=& P_t(P_{X'}(0,s)) + (-1)^s (2^{\bar{g}}-1) \chi( P_{X'}(0,s) )t^s  \\
&=&   \Big[\sum_{i=0}^{min(s,\bar{g})} { \bar{g} \choose i}  \frac{ t^{2s-i+1}-t^i}{t-1}\Big] +  (2^{\bar{g}} -1)  {\bar{g} - 1 \choose s}t^s.
\end{eqnarray*}
If the odd circles are separating then
\begin{eqnarray*} P_t(P)/(1+t)^k  &=& P_t(P_{X'}(0,s)) + (-1)^s (2^{\bar{g} +1}-1) \chi( P_{X'}(0,s) )t^s  \\
&=&   \Big[\sum_{i=0}^{min(s,\bar{g}+1)} { \bar{g}+1 \choose i}  \frac{ t^{2s-i+2}-t^i}{t^2-1}\Big] +  (2^{\bar{g} +1} -1)  {\bar{g} - 1 \choose s}t^s.
\end{eqnarray*}
\end{prop}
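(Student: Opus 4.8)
The plan is to run the Eilenberg--Moore spectral sequence (\ref{EM2formula}) for the $(\Z/2)^g$-principal bundle $P \to P_X(k,s)$, using the explicit ring presentations of $H^*(P_X(k,s))$ from Theorem \ref{Realsymmprodform}. The covering is classified by $g$ Stiefel--Whitney classes which, under the identification with the real Abel--Jacobi map (\ref{simplified}), are the pullbacks $aj^*$ of a basis of $H^1((S^1)^g)$; these span a $g$-dimensional subspace of $H^1(P_X(k,s))$. The first step is to understand this subspace: by Theorem \ref{Realsymmprodform}, $H^1(P_X(k,s)) = \Z_2\{x_1,\dots,x_k\} \oplus H^1(SP^s(N_{\bar g}))$ (non-separating case) or the analogous separating expression, and I claim the $k$ classes $x_1,\dots,x_k$ (which come from the $(S^1)^k$ fibre of the quasi-fibration $\pi$ of Proposition \ref{quasifib}, hence die under $aj$) are \emph{not} among the $w_i$, whereas the remaining $g - k = \bar g$ (resp.\ $\bar g + 1$, using that the square-root generator $\sqrt\eta$ must then be accounted for) classifying classes form a basis of the degree-one part of the "base" factor. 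So I would first split off a trivial $\wedge(x_1,\dots,x_k)$ tensor factor via Lemma \ref{tensorext}, reducing to the bundle $P' \to P_{X'}(0,s)$ pulled back by the $\bar g$ (resp.\ $\bar g+1$) remaining classes; this explains the $(1+t)^k$ divisibility.

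The core computation is then the EMSS for $P' \to P_{X'}(0,s)$, where $P_{X'}(0,s) = SP^s(N_{\bar g})$ (non-separating) or $SP^s(\Sigma_{(\bar g+1)/2})$ with a square root of $\eta$ adjoined (separating), classified by degree-one classes $w_1,\dots$ spanning the "$f_i$" or "$h_i,y_i,z_i$" generators. The key input is the Kallel--Salvatore / Macdonald relations: in $H^*(SP^s(N_{\bar g}))$ the classifying classes $f_i$ satisfy the relations (\ref{KSrelations}), so any product $f_{i_1}\cdots f_{i_r}b^t$ with $r+t = s+1$ vanishes, and in particular $f_i^{\,2} = 0$ modulo the ideal of products that bottom out through $b$. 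The idea is that on page $EM_2$, the Koszul differential $\delta(u_i) = w_i$ kills cohomology the same way as multiplication by the $w_i$; the cohomology of the Koszul complex $(\wedge(u_1,\dots)\otimes H^*(\text{base}),\delta)$ computes $\mathrm{Tor}$ over the polynomial ring generated by the $w_i$, and using the explicit monomial basis of $H^*(SP^s(N_{\bar g}))$ one checks that this $\mathrm{Tor}$ concentrates so that $EM_2$ is a free module over $H^*(\text{base})/(w_1,\dots)$ plus an extra piece in the top degree. Concretely, I expect $EM_2 \cong H^*(P_{X'}(0,s)) \oplus (\text{extra classes supported in degree } s)$, where the "extra" classes account for the failure of $H^*(\text{base})$ to be free over the subalgebra generated by the $w_i$ — and the count of extra classes is exactly $(2^{\bar g}-1)$ (resp.\ $(2^{\bar g+1}-1)$) times $\binom{\bar g - 1}{s}$. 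The factor $2^{\bar g}-1$ should emerge as the number of nonempty subsets of the $\bar g$ classifying classes (equivalently $\dim \wedge^{\ge 1}(\Z_2^{\bar g})$), and the Euler-characteristic coefficient $\binom{\bar g-1}{s} = (-1)^s\chi(SP^s(N_{\bar g}))$ counts the relevant socle-type classes.

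To pin down the degree-$s$ location and the coefficient I would argue via Euler characteristics and the filtration degree: since $P \to P_X(k,s)$ is a fibre bundle with fibre $(\Z/2)^g$ (a finite set of $2^g$ points), we have $\chi(P) = 2^g \chi(P_X(k,s))$, and combining this with the $(1+t)^k$ factorization and the known $P_t(P_X(k,s))$ from Proposition \ref{SScoll} forces the constant (total Betti number at $t=1$) and Euler characteristic of the correction term; matching these against a term supported purely in a single degree pins down that degree as $s$ (the middle dimension of $SP^s$, where $\chi$ of $SP^s(N_{\bar g})$ is detected) and its coefficient as claimed. The second equality in each displayed formula is then just substituting the Kallel--Salvatore Poincaré series of Theorem \ref{nonerientablesymm} (resp.\ the Macdonald series of Theorem \ref{McDThm}, doubled appropriately for the $\sqrt\eta$) and $\chi(SP^s(N_{\bar g})) = (-1)^s\binom{\bar g-1}{s}$.

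\textbf{The main obstacle} I anticipate is showing that the Eilenberg--Moore spectral sequence collapses at $E_2$ (no higher differentials) \emph{and} that there is no nontrivial multiplicative extension obscuring the additive answer — equivalently, controlling $EM_2$ precisely enough. The danger is that $H^*(SP^s(N_{\bar g}))$ is far from free over the polynomial subring on the $w_i$, so the Koszul homology has classes in several homological degrees, and a priori a differential $d_r$ ($r \ge 2$) could connect them. I would handle this by a dimension/degree count: the total dimension of $EM_2$ must be at least $\dim H^*(P)$ (which is $\ge P_t(\text{base})$ at $t=1$ by Lemma \ref{coverge}, the "$\ge$" direction), while an upper bound on $\dim H^*(P)$ comes from the fact that $P \to P_X(k,s)$ has a $2^g$-point fibre so $\dim H^*(P) \le 2^g \dim H^*(P_X(k,s))$ is too weak — instead I would iterate the two-fold-cover argument of Lemma \ref{coverge} one class at a time, using that the $g$-th (or $\bar g$-th) classifying class, after quotienting by the others, has nonzero square (it is the class $\sqrt\eta$ or an $f_i$ surviving to a situation where $b \ne 0$), so the Thom--Gysin sequence at that final stage is \emph{not} exact-in-pairs and produces precisely the computed deficit. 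Getting the bookkeeping of this last step right — identifying which class has nonzero square after the partial quotient and computing the resulting kernel/cokernel of cup-product — is the delicate part, and it is where the split into separating vs.\ non-separating cases genuinely matters.
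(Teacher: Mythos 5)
Your overall architecture matches the paper's: factor the cover through an intermediate $P \to P' \to P_X(k,s)$ so that the fibre/exterior part contributes $(1+t)^k$ via Lemma \ref{tensorext} and the quasi-fibration of Proposition \ref{quasifib}, reducing everything to a $(\Z/2)^q$-cover $P'' \to P_{X'}(0,s)$ with $q = \bar{g}$ or $\bar{g}+1$. One correction at this stage: the $k$ (resp.\ $k-1$) classifying classes that restrict to a basis of $H^1$ of the quasi-fibre $(S^1)^k$ \emph{are} among the $w_i$ --- that is exactly why Lemma \ref{tensorext} applies to the subcover $P \to P'$; if they were not among the $w_i$, as you assert, that lemma would have nothing to split off and the $(1+t)^k$ divisibility would need a different argument.

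The genuine gap is in the core computation of $P_t(P'')$. You correctly identify the obstacle --- controlling $EM_2$ exactly and ruling out higher differentials --- but neither of your proposed resolutions closes it. Computing the full Koszul homology of $H^*(SP^s(N_{\bar{g}}))$ over the subring generated by the $w_i$ and proving collapse is not carried out; and the fallback of iterating Lemma \ref{coverge} one class at a time founders because (i) after one quotient you no longer have an explicit ring in which to run Thom--Gysin, and (ii) the premise that some classifying class ``has nonzero square'' is false: every $w_i = aj^*(z_i)$ is pulled back from $H^1((S^1)^g)$, so $w_i^2 = 0$ identically. The paper's route needs neither collapse nor the full $EM_2$: the presentation $\wedge(x_1,\dots,x_q)\otimes S(\alpha)\twoheadrightarrow H^*(P_{X'}(0,s))$ is an isomorphism in degrees $\le s$, so the induced map $\wedge(y_1,\dots,y_q)\otimes S(\alpha) \to EM_2$ (with $y_i = [u_ix_i]$) is an isomorphism in total degree $< s$, giving the upper bound $P_t(P'') \le P_t(EM_2) = P_t(P_{X'}(0,s))$ modulo $t^s$; Lemma \ref{coverge} gives the matching lower bound, so the Betti numbers agree below degree $s$. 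Poincar\'e duality (both spaces are closed $2s$-manifolds) transports this agreement to degrees above $s$, and the single remaining Betti number in degree $s$ is then forced by $\chi(P'') = 2^q\chi(P_{X'}(0,s))$. You gesture at the Euler-characteristic step, but without the low-degree-plus-duality sandwich you have no proof that the discrepancy is concentrated in degree $s$, which is precisely what your concluding count presupposes.
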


\begin{proof}
The $(\Z/2)^g$-principal bundle $P \rightarrow P_X(k,s)$ has Stiefel-Whitney classes $(w_1,...,w_g)$ given by the image of a basis $z_1,...,z_g \in H^1((S^1)^g)$ under $$aj^*_1: H^1((S^1)^g) \rightarrow H^1( P_X(k,s)).$$
Let  $q := \bar{g}+1$ if $X^{\tau}$ is separating, and $q:=\bar{g}$ if not.  

If $s=0$, then $P_X(k,0) = (S^1)^k$ and by Theorem \ref{Realsymmprodform} we may choose the basis so that $w_1=... =w_q =0$ and $w_{q+1},...,w_g \in H^1( P_X(k,0))$ are linearly independent. It follows that
$$P = \coprod_{2^q} (S^1)^{k}$$
in accord with formulas above.

Now suppose that $s\geq 1$. Then any basis $z_1,...,z_g$ will map to a linearly independent set $w_1,..,w_g$. We can choose the basis so that $w_1,...,w_q$ lie in the image of $\pi^*$ where $\pi: P_X(k,s) \rightarrow P_{X'}(0,s)$ is the quasi-fibration (\ref{quasifib0}), and $w_{q+1},...,w_g$ are sent to linearly independent  elements in the cohomology of $P_X(k,0) \cong (S^1)^k$, the quasi-fibre of $\pi$. 

Consider the intermediate covering space
$$ P \rightarrow P' \rightarrow P_X(k,s) $$
where $P'$ has is the $(\Z/2)^q$-bundle classified by $w_1,...,w_q$. Then $P'$ is a pull-back
$$\xymatrix{ P' \ar[r] \ar[d] &  P_X(k,s) \ar[d] \\   P'' \ar[r] & P_{X'}(0,s)}. $$
In particular, $P' \rightarrow P''$ is a quasi-fibration with quasi-fibre $(S^1)^{k}$, whose Serre spectral sequence collapses because it is a pull-back of (\ref{quasifib0}), so $P_t(P') = (1+t)^{k}P_t(P'')$.

The $(\Z/2)^{g-q}$-bundle $P \rightarrow P'$ is classifed by the images of $w_{q+1},...,w_g$. By Theorem \ref{Realsymmprodform}, we see that it satisfies the conditions of Lemma \ref{tensorext}, so $$P_t(P) = P_t(P') = (1+t)^{k}P_t(P'').$$
It remains to compute $P_t(P'')$. By Lemma \ref{coverge}, we know $P_t(P'') \geq P_t(P_{X'}(0,s))$. 

If $X^{\tau}$ is non-separating then by Proposition \ref{symprodeen} we have an isomorphism $ H^*(P_{X'}(0,s)) \cong H^*(SP^s(N_{q}))$. If $X^{\tau}$ is separating, then $ H^*(P_{X'}(0,s)) \cong H^*(SP^s(\Sigma_{(q/2)})$.   Recall from Corollary \ref{g+1Gens} and Proposition \ref{Nofixed2} we have a surjective homomorphism 
\begin{equation}\label{dobulesurj}
\wedge(x_1,...,x_q) \otimes S(\alpha) \rightarrow H^*( P_{X'}(0,s))
\end{equation}
which is an isomorphism in degree less than $s+1$, where $\alpha =w$ has degree one if $X^{\tau}$ is non-separating and $\alpha = \eta$ has degree two if $X^{\tau}$ is separating. The Stiefel-Whitney invariants of $P''$ are the images of $(x_1,...,x_q)$ so (\ref{dobulesurj}) extends naturally to a dga morphism
$$\wedge(u_1,...,u_q) \otimes \wedge(x_1,...,x_q) \otimes S(\alpha) \rightarrow \wedge(u_1,...,u_q) \otimes H^*( P_{X'}(0,s)).$$
where $\delta(u_i) = x_i$. This yields a map in cohomology (where $y_i := [u_ix_i]$)
$$  \wedge(y_1, ..., y_q) \otimes S(\alpha) \rightarrow E_2^{*,*}$$
which is an isomorphism total degree less than $s$. Thus
$$ P_t( P_{X'}(0,s))  =  P_t(EM_2^{*,*}) \geq P_t(EM_{\infty}^{*,*}) = P_t( P'')~mod~t^s $$
so $P_{X'}(0,s)$ and $P''$ have the same Betti numbers for degrees less than $s$.  Since they are both compact manifolds of dimension $2s$, by Poincar\'e duality they also have the same Betti numbers for degree greater than $s$, so they differ only in the middle degree $s$.  Because the Euler characteristics satisfy $\chi( P'') = 2^{q} \chi(P_{X'}(0,s))$, this forces $$\dim( H^s(P'')) = \dim(H^s(P_{X'}(0,s))) +  (-1)^s(2^{q} -1)\chi(P_{X'}(0,s)).$$
It only remains to input the formulas from Proposition \ref{SScoll}.

\end{proof}

\section{Betti numbers of the moduli space}\label{Higgs bundle Poincare series}

\subsection{Betti numbers of $M(2,D)^{\tau}$}

\begin{thm}\label{MrDBetti}
Let $(\Sigma,\tau)$ be a real curve of genus $g \geq 2$ with $n$ real circles, let $D$ be a real divisor of odd degree with $k$ odd circles. Let $\bar{g} = g-k$ and $b = n-1$. 

If the odd circles are not separating then the Poincar\'e series of $M(2,D)^{\tau}$ is 
\begin{eqnarray*}
 P_t(M(2,D)^{\tau})  &= &  \frac{ (1+t)^{b} (1+t^2)^{b}(1+t^3)^{g-b} - 2^bt^g(1+t)^g}{(1-t)(1-t^2)} \\
 &+& 2^b(1+t)^k \sum_{s = 0 }^{ g - (k+3)/2} \Big( (2^{\bar{g}} -1)  {\bar{g} - 1 \choose s}t^{s} + \sum_{i=0}^{min(s ,\bar{g})} { \bar{g} \choose i} \frac{t^{2s-i+1} -t^i}{t-1} \Big)t^{3g-3-k-2s}
\end{eqnarray*}

If the odd circles are separating, then 
\begin{eqnarray*}
 P_t(M(2,D)^{\tau})  &= &  \frac{ (1+t)^{b} (1+t^2)^{b}(1+t^3)^{g-b} - 2^bt^g(1+t)^g}{(1-t)(1-t^2)} \\
 &+& 2^b(1+t)^k \sum_{s = 0 }^{ g - (k+3)/2} \Big( (2^{\bar{g} +1} -1)  {\bar{g} - 1 \choose s}t^{s} + \sum_{i=0}^{min(s ,\bar{g}+1)} { \bar{g}+1 \choose i} \frac{t^{2s-i+2} -t^i}{t^2-1} \Big)t^{3g-3-k-2s}
\end{eqnarray*}
\end{thm}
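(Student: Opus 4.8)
The plan is to assemble Theorem \ref{MrDBetti} directly from equation (\ref{PPMform}), which expresses $P_t^{\Z_2}(M(2,D)^\tau)$ as a sum over $U(1)$-fixed components $F$ of $P_t(F^\tau) t^{d_F}$. First I would isolate the minimal component: the minimizing set of $\mu$ on $M(2,D)^\tau$ is $N(2,D)^\tau$, the moduli space of Real vector bundles, with Morse index $0$. Its $\Z_2$-Poincar\'e series was computed in \cite{B,B2,LS}; I would quote the value in the form $\frac{(1+t)^b(1+t^2)^b(1+t^3)^{g-b} - 2^bt^g(1+t)^g}{(1-t)(1-t^2)}$, which is exactly the first summand appearing in both cases of the theorem. (I should double-check that the normalization there matches $b=n-1$ and that the formula is insensitive to separating versus non-separating, which is consistent with the theorem statement showing this term is the same in both cases.)

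Next I would treat the higher critical components. By the discussion in \S\ref{Outline of the proof}, for $r=2$ these are the $F_l$ with $1\le l\le g-1$, and diagram (\ref{pullbackreal}) together with the analysis of \S\ref{Real Abel-Jacobi and covering spaces} identifies $F_l^\tau$ as a disjoint union of $2^{n-1}=2^b$ copies of the space $P$ from the pull-back (\ref{simplified}), where $P \to P_X(k,s)$ is the $(\Z/2)^g$-cover with $k+2s = m = \deg(K(D)) - 2l = 2g-2+d-2l$. Since $d$ is odd, $m$ is odd, forcing $k\ge 1$ (there is always at least one odd circle), consistent with $\deg(D)$ odd. I would then substitute the formula for $P_t(P)$ from Proposition \ref{Coveringspacebetti} — which already comes pre-divided by $(1+t)^k$ and split into separating/non-separating cases — and multiply by $2^b$ and by $t^{d_F}$.

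The remaining bookkeeping is the Morse index. Hitchin's computation gives, for the rank-two component indexed by $l$ (equivalently by $m$), a Morse index on $M(2,D)$ equal to some even number $2d_{F}$, and Duistermaat's theorem halves it on the fixed-point set, so the exponent contributed is $d_F$. I would identify $d_F = \dim_{\C} F_l = m = 2g-2+d-2l$ minus a correction, or more precisely compute $2d_F = 2(g-1) + 2(\text{something in } l)$ from Hitchin and then halve. Re-indexing the sum over $l$ by $s$ via $k+2s = 2g-2+d-2l$, i.e. $l = (2g-2+d-k-2s)/2 = g-1+(d-k)/2 - s$, and noting $d-k$ is even by (\ref{d=w=k}), the constraint $1\le l\le g-1$ becomes a range on $s$; matching the upper limit $s = g-(k+3)/2$ (using $d\equiv k\pmod 2$ and that $d$ is, say, normalized to $1$ or more carefully tracking $d$) against the theorem's summation bound, and checking that $d_F$ simplifies to the stated exponent $3g-3-k-2s$. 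The main obstacle I expect is precisely this index calculation: pinning down Hitchin's Morse index for each component in terms of $l$, confirming it is even, correctly halving it, and then verifying that after the change of variables to $s$ the exponent becomes exactly $3g-3-k-2s$ and the summation range collapses to $0 \le s \le g-(k+3)/2$. Everything else is direct substitution of Proposition \ref{Coveringspacebetti} and the known value of $P_t(N(2,D)^\tau)$ into (\ref{PPMform}).
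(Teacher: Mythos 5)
Your proposal is correct and follows essentially the same route as the paper: plug the known series for $N(2,D)^{\tau}$ and the $2^{b}$ copies of $P$ from Proposition \ref{Coveringspacebetti} into (\ref{PPMform}), normalize $\deg(D)=1$ (the paper does this via $M(2,D)^{\tau}\cong M(2,D+2D')^{\tau}$), and re-index $l$ by $s$ so that the halved Hitchin index $d_{F}=g+2l-2$ becomes $3g-3-k-2s$ with $0\le s\le g-(k+3)/2$. The index bookkeeping you flag as the main obstacle works out exactly as you anticipate, and the paper supplies no more detail on it than you do.
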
 

\begin{remark}
The range of possible inputs into Theorem \ref{MrDBetti} is as follows (see \S \ref{Topology of real curves}). If the odd circles are not separating, then $k \equiv 1~mod~2$ and either $ 1 \leq k \leq n \leq g$ or $1 \leq k < n = g+1$. If the odd circles are separating, then $1 \leq k = n \leq g+1$ and $g \equiv n+1 \equiv 0~mod~2$.  
\end{remark}

\begin{remark}\label{lastremark}
If $(g,n,k) = (2,3,3)$ (so the odd circles must separate), then the second line in the above formula is empty, because $(g-(k+3)/2) < 0$.  In this case, there are no Morse critical points except the global minimum $N(2,D)^{\tau}$ so the Morse flow deformation retracts $M(2,D)^{\tau}$ onto $N(2,D)^{\tau}$, giving $P_t(M(2,D)^{\tau}) = 1+3t +3t^2+t^3$.
\end{remark}

\begin{proof}[Proof of Theorem \ref{MrDBetti}]
Given a real divisor $D'$, there is a natural isomorphism $$M(2, D)^{\tau} \cong M(2,D+2D')^{\tau}$$ defined by tensoring by $\mathcal{O}(D')$.  Since $\deg(D+2D') = \deg(D) +2\deg(D')$ and $w_1(D) = w_1(D+2D')$, we may assume without loss of generality that $\deg(D) =1$.

Recall (\ref{PPMform}), adding in Morse indices $d_F = g+2l-2$,
\begin{equation}\label{tempformula}
 P_t(M(2,D)^{\tau}) = P_t(N(2,D)^{\tau}) + \sum_{l = 1}^{g-1} P_t(F_l^{\tau}) t^{g+2l-2}
\end{equation}
where $N(2,D)^{\tau}$ is the moduli space of Real bundles of rank 2 and determinant $D$. By \cite{B2} we have
$$ P_t(N(2,D)^{\tau}) = \frac{ (1+t)^{b} (1+t^2)^{b}(1+t^3)^{g-b} - 2^bt^g(1+t)^g}{(1-t)(1-t^2)}.  $$

The remaining terms were calculated in \S \ref{Real Abel-Jacobi and covering spaces}.  If $ m := 2g-2l-1 < k$ then $P_l^{\tau} = \emptyset$. If $m\geq k$ we have 
$$ P_t(F_l^{\tau}) =  2^bP_t(P)$$
where $P$ is defined in Proposition \ref{Coveringspacebetti}, and $2s:= m-k$. We replace the index $l$ by $s = g-l -(k+1)/2$, which ranges from $s = 0$ to $s=g - (k+3)/2$.  Inputting into (\ref{tempformula}) completes the proof.
\end{proof}

\begin{remark}
The Poincar\'e series $P_t(M(2,D)^{\tau})$ is sensitive to whether or not the odd circles are separating. For example, with inputs $(g,n,k) = (2,1,1)$ we get 
$$P_t(M(2,D)^{\tau}) = \begin{cases}  (1+t)(1+3t^2) & \text{ if non-separating} \\   (1+t)(1+5t^2) & \text{ if separating} \end{cases}$$ 
\end{remark}

\subsection{Betti numbers of $M(2,d)^{\tau}$}

Let $(X,\tau)$ be a real curve of genus $g$ with $n\geq 1$ real circles, and consider the moduli of rank $r$, odd degree $d$ stable Higgs bundles $M(r,d)$ without fixing a determinant ($gcd(r,d) =1$ as always). The fixed point set $M(r,d)^{\tau}$ decomposes into $2^{n-1}$ path components
$$M(r,d)^{\tau} =  \coprod_{w} M(r,d)^{\tau}_w  $$
classified by $w \in H^1(X^{\tau};\Z_2)$ the first Stiefel-Whitney class of the determinant line bundle, which must satisfy $w(X^{\tau}) \equiv d~mod~2$. 

Tensor product defines a natural isomorphism $$M(r,d) \cong M(r,D) \times_{\Gamma_{2g}} M(1,0)$$ where $\Gamma_{2g} \cong (\Z/r)^{2g}$ is the $r$-torsion subgroup of $M(1,0) = Pic_0(X)\times H^0(X,K)  \cong (S^1)^{2g} \times \C^g$.  If we choose a real divisor $D$ such that $w_1(D)= w$, then we similarly obtain an isomorphism
$$ M(r,d)^{\tau}_w \cong M(r,D)^{\tau} \times_{\Gamma_g} M(1,0)^{\tau}_0$$
where $\Gamma_g \cong (\Z/r)^g$ is the $r$-torsion subgroup of $M(1,0)^{\tau}_0 \cong (S^1)^g \times \R^g$.

\begin{thm}\label{M(2,d)Betti}
Let $(\Sigma,\tau)$ be a real curve of genus $g \geq 2$ with $n$ real circles, let $D$ be a real divisor of odd degree $d$, and let $w = w_1(D)$. Let $\bar{g} = g-k$ and $b = n-1$. 

If the odd circles are not separating then the mod 2 Poincar\'e series of $M(2,d)^{\tau}_w$ is 
\begin{eqnarray*}
 P_t(M(2,d)^{\tau}_w)  &= &  \frac{ (1+t)^{b+g} (1+t^2)^{b}(1+t^3)^{g-b} - 2^bt^g(1+t)^{2g}}{(1-t)(1-t^2)} \\
 &+& 2^b(1+t)^{k+g} \sum_{s= 0 }^{ g - (k+3)/2} \Big( \sum_{i=0}^{min(s ,\bar{g})} { \bar{g} \choose i} \frac{t^{2s-i+1} -t^i}{t-1} \Big)t^{3g-3-k-2s}
\end{eqnarray*}

If the odd circles are separating, then 
\begin{eqnarray*}
 P_t(M(2,d)^{\tau}_w)  &= &  \frac{ (1+t)^{b+g} (1+t^2)^{b}(1+t^3)^{g-b} - 2^bt^g(1+t)^{2g}}{(1-t)(1-t^2)} \\
 &+& 2^b(1+t)^{k+g} \sum_{s = 0 }^{ g - (k+3)/2} \Big(  \sum_{i=0}^{min(s ,\bar{g}+1)} { \bar{g}+1 \choose i} \frac{t^{2s-i+2} -t^i}{t^2-1} \Big)t^{3g-3-k-2s}
 \end{eqnarray*}

\end{thm}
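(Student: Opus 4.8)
The plan is to re-run the $U(1)$-Morse theory of \S\ref{Outline of the proof}, but now directly on each path component $M(2,d)^{\tau}_w$, and to identify the critical submanifolds through the tensor-product decomposition $M(2,d)\cong M(2,D)\times_{\Gamma_{2g}}M(1,0)$ recalled above. First I would note that the scaling $U(1)$-action on $M(2,d)$, the moment map $\mu=\|\Phi\|^2$, and the relations $e^{i\theta}\tau=\tau e^{-i\theta}$, $\mu\circ\tau=\mu$, are all descents of the corresponding data on $M(2,D)\times M(1,0)$, where $U(1)$ acts diagonally by scaling both Higgs fields (under $(E,\Phi)\mapsto((E_0,\Phi_0),(L,\mathrm{tr}\,\Phi))$, writing $\Phi=\tfrac12\mathrm{tr}(\Phi)\,\mathrm{id}+\Phi_0$) and $\mu$ is, up to a positive constant, the sum of the two moment maps. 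Since $\Gamma_{2g}$ acts freely on $M(1,0)=Pic_0(X)\times H^0(X,K)$ and the only $U(1)$-critical submanifold of $M(1,0)$ is its minimum $Pic_0(X)\times\{0\}$ (on the $H^0(X,K)$-factor $\mu$ is a positive-definite quadratic form), the $U(1)$-critical submanifolds of $M(2,d)$ are precisely $\tilde F=F\times_{\Gamma_{2g}}Pic_0(X)$, $F$ running over Hitchin's critical submanifolds of $M(2,D)$, and the Morse index of $\tilde F$ equals that of $F$ (block-diagonal Hessian, new block positive definite). Applying Frankel's theorem \cite{F} upstairs and Duistermaat's theorem \cite{D,BGH} to $\mu|_{M(2,d)^{\tau}_w}$ exactly as in \S\ref{Outline of the proof} gives
$$ P_t(M(2,d)^{\tau}_w)=P_t(N(2,d)^{\tau}_w)+\sum_{l=1}^{g-1}P_t\!\big(\tilde F_l^{\tau}\cap M(2,d)^{\tau}_w\big)\,t^{g+2l-2}, $$
where $\tilde F_l=F_l\times_{\Gamma_{2g}}Pic_0(X)$ and, restricting to $\tau$-fixed points in the component labelled by $w$, $\tilde F_l^{\tau}\cap M(2,d)^{\tau}_w\cong F_l^{\tau}\times_{\Gamma_g}M(1,0)^{\tau}_0$ with $\Gamma_g=(\Z/2)^g$ the $2$-torsion of $M(1,0)^{\tau}_0\cong(S^1)^g\times\R^g$.

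The crucial step is to evaluate these real higher critical sets. Recall from \S\ref{Real Abel-Jacobi and covering spaces} that every component of $F_l^{\tau}$ is the pullback $P$ of the square (\ref{simplified}), i.e.\ $P=P_X(k,s)\times_{(S^1)^g}(S^1)^g$ along $aj$ and the squaring map $sq$; in particular $P$ carries a map to the ``upstairs'' torus $(S^1)^g$ along whose fibres $\Gamma_g=\ker(sq)$ acts by translation, and this is exactly the action induced by tensoring with Real $2$-torsion line bundles. I would then check the elementary fact that for the diagonal translation action of $\Gamma_g$ one has $(S^1)^g\times_{\Gamma_g}(S^1)^g\cong(S^1)^g\times(S^1)^g$ via $(x,y)\mapsto(x-y,2x)$, and conclude, after deleting the contractible $\R^g$-factor of $M(1,0)^{\tau}_0$, a diffeomorphism $P\times_{\Gamma_g}M(1,0)^{\tau}_0\simeq P_X(k,s)\times(S^1)^g$, the map being $(D',x,y)\mapsto(D',x-y)$ and the constraint $2x=aj(D')$ becoming vacuous. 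Since $F_l^{\tau}$ has $2^b$ components ($b=n-1$), none permuted by $\Gamma_g$ (which lies in the identity component of $Pic_0(X)^{\tau}$), this yields $\tilde F_l^{\tau}\cap M(2,d)^{\tau}_w\simeq\coprod_{2^b}P_X(k,s)\times(S^1)^g$ and hence $P_t(\tilde F_l^{\tau}\cap M(2,d)^{\tau}_w)=2^b(1+t)^gP_t(P_X(k,s))$, with $P_t(P_X(k,s))$ (in both the separating and non-separating cases) supplied by Proposition \ref{SScoll}. For the minimum, the corresponding statement $P_t(N(2,d)^{\tau}_w)=(1+t)^gP_t(N(2,D)^{\tau})$ I would either quote from \cite{B2} or obtain in the same spirit: the tensoring $\Gamma_g$-action on $H^*(N(2,D)^{\tau};\Z_2)$ is cohomologically trivial, so the fibration $N(2,D)^{\tau}\hookrightarrow N(2,d)^{\tau}_w\to(S^1)^g$ satisfies Leray--Hirsch.

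Finally I would assemble the pieces: substitute the formula for $P_t(N(2,D)^{\tau})$ from the proof of Theorem \ref{MrDBetti} and the formula for $P_t(P_X(k,s))$ from Proposition \ref{SScoll}, reindex the sum by $s=g-l-(k+1)/2$ exactly as in that proof (so $t^{g+2l-2}=t^{3g-3-k-2s}$ and $s$ runs from $0$ to $g-(k+3)/2$, empty terms discarded), and simplify using $(1+t)^g\big[(1+t)^b(1+t^2)^b(1+t^3)^{g-b}-2^bt^g(1+t)^g\big]=(1+t)^{b+g}(1+t^2)^b(1+t^3)^{g-b}-2^bt^g(1+t)^{2g}$. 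The only difference with Theorem \ref{MrDBetti} is that the middle-degree ``torsion'' terms $2^b(1+t)^k(2^{\bar g}-1)\binom{\bar g-1}{s}t^{s}$ (and their separating counterpart) are absent here: they are precisely the part of $P_t(F_l^{\tau})=2^bP_t(P)$ by which $P$ differs from $P_X(k,s)$, and the fibre product with $M(1,0)^{\tau}_0$ has trivialized the $sq$-cover $P\to P_X(k,s)$.

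The main obstacle will be the bookkeeping in the second paragraph: keeping the various real structures --- on $X$, on the Picard groups, on the symmetric products --- and the squaring map consistent with the component count $2^{n-1}$, so that the cancellation $P\times_{\Gamma_g}M(1,0)^{\tau}_0\simeq P_X(k,s)\times(S^1)^g$ genuinely holds on the nose; a secondary point is to pin down $P_t(N(2,d)^{\tau}_w)$ cleanly, either by citation or by the triviality of the tensoring $\Gamma_g$-action on $\Z_2$-cohomology.
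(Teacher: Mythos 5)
Your proposal is correct and follows the same overall skeleton as the paper: equivariant Morse theory for $\mu$ on $M(2,d)^{\tau}_w$, identification of the critical loci through the tensor-product decomposition $M(2,d)^{\tau}_w \cong M(2,D)^{\tau}\times_{\Gamma_g}M(1,0)^{\tau}_0$, the quoted value of $P_t(N(2,d)^{\tau}_w)$, and the same reindexing $s=g-l-(k+1)/2$. The one step you handle genuinely differently is the evaluation of $P_t(Q)$ for $Q=P\times_{\Gamma_g}M(1,0)^{\tau}_0$. The paper sandwiches $P_t(Q)$ between two bounds: the Serre spectral sequence of the fibre bundle $(S^1)^g\times\R^g\to Q\to P_X(k,s)$ gives $P_t(Q)\leq(1+t)^gP_t(P_X(k,s))$, while viewing $Q$ as a $\Gamma_g$-cover of $P_X(k,s)\times(S^1)^g\times\R^g$ with square-zero Stiefel--Whitney classes and applying Lemma \ref{coverge} gives the reverse inequality. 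You instead produce an explicit diffeomorphism $P\times_{\Gamma_g}(S^1)^g\cong P_X(k,s)\times(S^1)^g$ via $((D',x),y)\mapsto(D',x-y)$, using that the fibre of $sq$ over $aj(D')$ is a $\Gamma_g$-torsor; this checks out (the map is $\Gamma_g$-invariant and the residual choice of $x$ with $sq(x)=aj(D')$ is exactly absorbed by the quotient), and it is arguably the cleaner argument, since it pins down the diffeomorphism type of the critical locus rather than only its Betti numbers, and it makes transparent why the middle-degree correction terms of Theorem \ref{MrDBetti} disappear. What the paper's route buys in exchange is robustness: it needs only that $\Gamma_g$ acts by deck transformations with square-zero characteristic classes, and so avoids the careful bookkeeping (which you rightly flag as the main risk) needed to verify that the tensoring action on the pullback (\ref{simplified}) really is the diagonal translation action your explicit map presupposes.
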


\begin{proof}
The discussion in \S \ref{Outline of the proof} applies analogously to $M(2,d)^{\tau}$. In particular, the function $\mu: M(2,d) \rightarrow \R$, $\mu(E,\Phi) = \| \Phi \|_{L^2}^2$ restricts to a proper, $\Z_2$-perfect Morse-Bott function on $M(2,d)^{\tau}$, hence also on its path components $M(2,d)_w^{\tau}$,  with critical set equal to $M(2,d)^{\tau}_w \cap M(2,d)^{U(1)}$. The global minimum of $\mu$ on $M(2,d)^{\tau}_w$ is equal to the moduli space $N(2,d)^{\tau}_w$ of stable Real vector bundles whose Betti numbers were computed in \cite{B, LS}:
$$P_t( N(2,d)^{\tau}_w) = (1+t)^g P_t(N(2,D)^{\tau}).$$ 
The remaining critical loci have the form $\coprod_{2^b} Q$ where 
$$Q:= P \times_{\Gamma_g}  M(1,0)^{\tau}_0$$ 
and $P \rightarrow P_X(k,s)$ is the $\Gamma_g$-principal cover (\ref{simplified}) for some $s$. Therefore, projecting onto the first factor determines a fibre bundle
$$(S^1)^g \times \R^g \rightarrow Q \rightarrow P_X(k,s).$$
Since $\Gamma_g$ acts on $M(1,0)^{\tau}_0 \cong (S^1)^g\times \R^g$ by translation, $\pi_1(P_X(k,s))$ acts trivially on the cohomology of the fibre, and the Serre spectral sequence gives us an inequality
$$P_t(Q) \leq (1+t)^g P_t( P_X(k,s)). $$
On the other hand, $Q$ is a $\Gamma_g$-principal cover of $$ (P/\Gamma_g) \times (M(1,0)^{\tau}_0/\Gamma_g) \cong P_X(k,s) \times (S^1)^g\times \R^g$$
determined by Stiffel-Whitney classes that square to zero, so applying Lemma \ref{coverge} we get the converse inequality and conclude that $P_t(Q) = (1+t)^g P_t(P_X(k,s))$. We obtain
\begin{equation}\label{lastone}
 P_t( M(2,d)^{\tau}_w) = (1+t)^g \Big[ P_t( N(2,D)^{\tau}) + \sum_{s = 0 }^{ g - (k+3)/2}  2^{b}P_t(P_X(k,s)) t^{3g-3-k-2s} \Big].
 \end{equation}
\end{proof}

\begin{cor}
The Poincar\'e series $P_t(M(2,d)^{\tau})$ is independent of whether or not the odd circles are separating. 
\end{cor}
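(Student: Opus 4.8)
The plan is to deduce the statement from equation (\ref{lastone}) together with Remark \ref{n=o2}. I would first argue that, for every path component $M(2,d)^\tau_w$, the Poincar\'e series $P_t(M(2,d)^\tau_w)$ depends only on the genus $g$, the number $n$ of real circles (through $b=n-1$), and the number $k=k(w)$ of odd circles of that component, and in particular not on whether those $k$ circles separate $\Sigma$. Going through (\ref{lastone}) term by term: the prefactor $(1+t)^g$, the series $P_t(N(2,D)^\tau)$ quoted from \cite{B2}, the summation range $0\le s\le g-(k+3)/2$, and the Morse indices $3g-3-k-2s$ manifestly have this property, so the only term whose definition refers to the separating/non-separating dichotomy is $P_t(P_X(k,s))$. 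But Remark \ref{n=o2} says precisely that this series is the same in both cases as soon as $\bar{g}:=g-k\ge 0$: this is an application of Remark \ref{n=o} (Dold's theorem), valid because in the separating case one has $k=n$ with $g$ even and $n$ odd, so $\bar{g}=g-n$ is odd.

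It remains to treat the edge case $\bar{g}<0$, i.e.\ $k>g$. Since $k\le n\le g+1$ always, this forces $k=n=g+1$, so $\Sigma$ is an $M$-curve; the classification of real curves recalled in \S\ref{Topology of real curves} then forces $a=0$, hence the component with all real circles odd is separating and admits no non-separating counterpart, so there is nothing to compare. Thus in all cases $P_t(M(2,d)^\tau_w)$ is a function of $(g,n,k(w))$ alone. Summing over components now finishes the argument: the path components of $M(2,d)^\tau$ are indexed by the classes $w\in H^1(\Sigma^\tau;\Z_2)$ with $w(\Sigma^\tau)\equiv d\bmod 2$, and for each $k$ with $k\equiv d\bmod 2$ and $0\le k\le n$ exactly $\binom{n}{k}$ of these satisfy $k(w)=k$; since this indexing data involves only $n$ and $d$, the sum $P_t(M(2,d)^\tau)=\sum_w P_t(M(2,d)^\tau_w)$ is a function of $(g,n,d)$ only, and in particular does not see the separating/non-separating distinction.

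I do not expect a genuine obstacle. The one substantive input, the identity $P_t(SP^s(N_{\bar{g}}))=P_t(SP^s(\Sigma_{(\bar{g}+1)/2}))$ for odd $\bar{g}$, is already isolated in Remark \ref{n=o2}; and the reason the phenomenon that makes $M(2,D)^\tau$ \emph{depend} on the separating/non-separating distinction --- the middle-degree correction term $(2^{\bar{g}}-1)\binom{\bar{g}-1}{s}t^s$ visible in Theorem \ref{MrDBetti} --- does no harm here is that no such term appears in Theorem \ref{M(2,d)Betti}, the Morse stratum $Q$ having Poincar\'e series exactly $(1+t)^g P_t(P_X(k,s))$ by the squeeze between Lemma \ref{coverge} and the Serre spectral sequence in that theorem's proof. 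The only point requiring a little care is the bookkeeping with the parity and range constraints on $(g,n,k)$ in the separating case, which is routine.
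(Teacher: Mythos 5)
Your proposal is correct and follows essentially the same route as the paper: reduce everything in (\ref{lastone}) to the single term $P_t(P_X(k,s))$ and invoke Remark \ref{n=o2} (Dold's identity), observing that whenever both a separating and a non-separating configuration exist for the same $(g,n,k)$ one has $k=n\leq g$, so $\bar{g}\geq 0$. Your extra bookkeeping (the edge case $k=n=g+1$ and the sum over components $w$) only makes explicit what the paper leaves implicit.
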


\begin{proof}
If both separating and non-separating examples exist for given $(g, n, k)$, then $g\geq n= k$,  so we can apply Remark \ref{n=o2} to (\ref{lastone}).
\end{proof}

\section*{Acknowledgement}

Thanks to Steve Rayan and Noah MacAulay for helpful discussions and to my host Lisa Jeffrey at the University of Toronto where this research was conducted. Thanks also to Matthias Franz for helpful comments on an earlier draft. This research was supported by an NSERC Discovery Grant.

\end{document}